\newcounter{countercheck}[section]
\theoremstyle{plain}
\newtheorem{theorem}[countercheck]{Theorem}
\newtheorem{proposition}[countercheck]{Proposition}
\newtheorem{lemma}[countercheck]{Lemma}
\newtheorem{corollary}[countercheck]{Corollary}
\theoremstyle{definition}
\newtheorem{convention}[countercheck]{Convention}
\newtheorem{example}[countercheck]{Example}
\theoremstyle{remark}
\newtheorem{remark}[countercheck]{Remark}
\renewcommand{\phi}{\varphi}
\renewcommand{\epsilon}{\varepsilon}
\newcommand{\NN}{\mathbb{N}}
\newcommand{\llangle}{\langle\langle}
\newcommand{\rrangle}{\rangle\rangle}
\newcommand{\Cp}{\mathcal{C}_{\epsilon}}
\newcommand{\Cm}{\mathcal{C}_{-\epsilon}}
\newcommand{\C}{\mathcal{C}}
\renewcommand{\P}{\mathcal{P}}
\newcommand{\Cbf}{\textbf{\textup{C}}}
\DeclareMathOperator{\Aut}{Aut}
\DeclareMathOperator{\Sym}{Sym}
\DeclareMathOperator{\proj}{proj}
\DeclareMathOperator{\Fix}{Fix}
\DeclareMathOperator{\Stab}{Stab}
\DeclareMathOperator{\id}{id}
\DeclareMathOperator{\co}{\textup{(co)}}
\DeclareMathOperator{\lco}{\textup{(lco)}}
\DeclareMathOperator{\lsco}{\textup{(lsco)}}
\numberwithin{equation}{section} 
\title{$3$-spherical twin buildings}
\author{Sebastian \textit{Bischof}\footnote{email: sebastian.bischof@math.uni-giessen.de} \\
	Mathematisches Institut, Arndtstra\ss e 2, 35392 Gie\ss en, Germany}
\date{\today}
\begin{document}

%\listoftodos
%\newpage

\maketitle

\begin{abstract}
	We classify thick irreducible $3$-spherical twin buildings of rank at least $3$ in which every panel contains at least $6$ chambers. Together with the Main result of \cite{Mu99} we obtain a classification of thick irreducible $3$-spherical twin buildings.
	
	\medskip \noindent \textbf{Keywords} Groups of Kac-Moody type, $3$-spherical RGD-systems, Twin buildings
	
	\medskip \noindent \textbf{Mathematics Subject Classification} 51E24, 20E42
\end{abstract}

\section{Introduction}

In \cite{Ti74} Tits gave a complete classification of all thick irreducible spherical buildings of rank at least $3$. The decisive step in this classification is the extension theorem for isometries (Theorem $4.1.2$ in loc. cit.). It implies that a thick spherical building is uniquely determined by its local structure. Inspired by the paper \cite{Ti87} on Kac-Moody groups over fields, Ronan and Tits introduced twin buildings. These combinatorial objects appear to be natural generalisations of spherical buildings, as they come equipped with an opposition relation which shares many important properties with the opposition relation in a spherical building. In \cite{MR95} M\"uhlherr and Ronan gave a proof of the extension theorem for $2$-spherical twin buildings satisfying an additional condition they call $\co$. Condition $\co$ turns out to be rather mild (see \cite[Introduction]{MR95}). In order to classify such twin buildings it suffices to determine all possible local structures which appear in twin buildings.

The local structure of a building mentioned before is essentially the union of all rank $2$ residues of a chamber. In \cite{RT87} Ronan and Tits introduced the notion of a \textit{foundation} which is designed to axiomatize geometric structures that may occur as local structures of a building. Roughly speaking, it is a union of rank $2$ buildings which are glued along certain rank $1$ residues (for the precise definition we refer to Section \ref{Section: Foundations}). A foundation is called \textit{integrable}, if it is the local structure of a twin building. One can associate with each twin building of $2$-spherical type a foundation and all such are \textit{Moufang foundation}, i.e. the irreducible rank $2$ buildings are Moufang and the glueings are compatible with the Moufang structures induced on the rank $1$ residues.

Inspired by \cite{RT87}, Tits conjectured that a Moufang foundation is integrable if and only if each of its spherical rank $3$ restrictions is integrable (cf. \cite[Conjecture $2$]{Ti92}). It turned out that this conjecture was too optimistic and he gave a reformulation by omitting the word "spherical" in his earlier conjecture (cf. \cite{Ti73-00}). A proof of this conjecture would reduce the classification of $2$-spherical twin buildings to the rank $3$ case. In \cite{MHab} and \cite{MuSurvey} a strategy for the classification of $2$-spherical twin buildings is outlined that does not make use of this conjecture. Several important steps in this classification programme have been carried out in the meantime (cf. \cite{Mu99}, \cite{WeiDiss}, \cite{WenDiss}). The results obtained up until now suggest that the conjecture will be a consequence of the classification once it will be accomplished. However, it would be most desirable to have a proof of this conjecture that is independent of the classification. Our main result is a conceptual proof of this conjecture in the $3$-spherical case. It turned out that we only need the integrability for the restriction to irreducible types (cf. Corollary \ref{Corollary: Main result}):

\medskip
\noindent
\textbf{Theorem A:} Let $\mathcal{F}$ be a Moufang foundation of irreducible $3$-spherical type and of rank at least $3$ such that every panel contains at least $6$ chambers. Then the following are equivalent:
\begin{enumerate}[label=(\roman*)]
	\item $\mathcal{F}$ is integrable.
	
	\item Each irreducible rank $3$ restriction of $\mathcal{F}$ is integrable.
\end{enumerate}

\medskip A consequence of Theorem A together with \cite{Mu99,BM23} is the classification of thick irreducible $3$-spherical twin buildings (cf. Theorem \ref{Theorem: Classification result}):

\medskip
\noindent
\textbf{Corollary B:} Let $\Delta$ be a thick irreducible $3$-spherical twin building of rank at least $3$. Then $\Delta$ is known.

\medskip Let $(W, S)$ be a Coxeter system and let $\Phi$ be the associated set of roots (viewed as half-spaces). An \textit{RGD-system of type $(W, S)$} is a pair $(G, (U_{\alpha})_{\alpha \in \Phi})$ consisting of a group $G$ together with a family of subgroups $U_{\alpha}$ indexed by the set of roots satisfying a few axioms (for the precise definition see Section \ref{Section: Preliminaries}). Let $\mathcal{F}$ be a Moufang foundation of type $(W, S)$ satisfying a certain Condition $\lco$ (e.g. this is satisfied if every panel contains at least $5$ chambers). In Section \ref{Section: Foundations} we construct for all $s\neq t\in S$ RGD-systems $X_{s, t}$ acting on the corresponding building of the foundation. Moreover, we construct RGD-systems $X_s$ and canonical homomorphisms $X_s \to X_{s, t}$. Let $G$ be the direct limit of the groups $X_s$ and $X_{s, t}$. Then there is a natural way of defining a family of \textit{root groups} $(U_{\alpha})_{\alpha \in \Phi}$ inside $G$. Let $\mathcal{D}_{\mathcal{F}} := (G, (U_{\alpha})_{\alpha \in \Phi})$. We prove the following result (cf. Corollary \ref{inclusionsinjectivergd3}):

\medskip
\noindent
\textbf{Theorem C:} Let $\mathcal{F}$ be a Moufang foundation of $2$-spherical type satisfying Condition $\lco$. If the canonical mappings $U_{\pm \alpha_s} \to G$ are injective and if $\mathcal{D}_{\mathcal{F}}$ satisfies (RGD$3$), then $\mathcal{D}_{\mathcal{F}}$ is an RGD-system and $\mathcal{F}$ is integrated in the twin building associated to $\mathcal{D}_{\mathcal{F}}$.

\medskip We use Theorem C to prove Theorem A. Our strategy is to let $G$ act on a building and deduce that the hypotheses of Theorem C are satisfied. Thus we have a twin building $\Delta(\mathcal{D}_{\mathcal{F}})$ in which $\mathcal{F}$ is integrated. In particular, we have the following corollary (cf. Theorem \ref{Theorem: rank 3 intrgrable implies RGD-system}):

\medskip
\noindent
\textbf{Corollary D:} Let $\mathcal{F}$ be an irreducible Moufang foundation of $3$-spherical type such that every panel contains at least $6$ chambers. If each irreducible rank $3$ restriction of $\mathcal{F}$ is integrable, then $\mathcal{D}_{\mathcal{F}}$ is an RGD-system.

\medskip
\noindent
\textbf{Remarks:}

\smallskip
\noindent $1.$ In \cite{Mu99} M\"uhlherr accomplished the classification of thick locally finite twin buildings of irreducible $2$-spherical type and $m_{st} <8$ without residues associated with one of the groups $B_2(2), G_2(2), G_2(3), ^2F_4(2)$. In particular, the thick locally finite twin buildings of irreducible $3$-spherical type without residues associated with $B_2(2)$ are already known. As we will see in the proof of Theorem \ref{Theorem: Classification result}, the assumption about the residues can be dropped in the $3$-spherical case.

\smallskip
\noindent $2.$ By Corollary B we have a classification of $3$-spherical simply-laced (i.e. $m_{st} \in \{2, 3\}$) twin buildings. We note that in this case the integrability of those Moufang foundations is already established by different methods: In a $3$-spherical simply-laced Moufang foundation, the Moufang triangles are parametrised by skew-fields and all such are isomorphic. If there exists $s\in S$ with three neighbours in the Coxeter diagram, then there is a $D_4$ subdiagram (as there is no $\tilde{A}_2$ subdiagram) and hence the Moufang triangles are parametrised by a field. The existence of a twin building with the prescribed foundation follows now from \cite{Mu99}. If each $s\in S$ has at most $2$ neighbours, the diagram is either $A_n$ or $\tilde{A}_n$. In the first case the existence follows from projective geometry and in the second case from Kac-Moody theory.

\renewcommand{\abstractname}{Acknowledgement}
\begin{abstract}
	I am very grateful to Bernhard M\"uhlherr for proposing this project to me, as well as for many helpful comments and suggestions. 
\end{abstract}

\section{Preliminaries}\label{Section: Preliminaries}

\subsection*{Direct limits}

This subsection is based on \cite{Se79}.

Let $I$ be a set and let $(G_i)_{i \in I}$ be a family of groups. Furthermore, let $F_{i,j}$ be a set of homomorphisms of $G_i$ into $G_j$. Then a group $G := \lim\limits_{\longrightarrow} G_i$ together with a family of homomorphisms $f_i: G_i \to G$ such that $f_j \circ f = f_i$ holds for all $f\in F_{i, j}$ is called \textit{direct limit} of the groups $G_i$, relative to the $F_{i, j}$ if it satisfies the following condition:

If $H$ is a group and if $h_i: G_i \to H$ is a family of homomorphisms such that $h_j \circ f = h_i$ holds for all $f\in F_{i, j}$, then there exists exactly one homomorphism $h: G \to H$ such that $h_i = h \circ f_i$.

In this paper we only consider the case where $X_i$ and $X_{i, j} := \langle X_i, X_j \rangle$ are groups for $i \neq j \in I$ and $f: X_i \to X_{i, j}$ are the canonical homomorphisms. We call this direct limit the \textit{$2$-amalgam of the groups $X_i$}.

\subsection*{Coxeter systems}

Let $(W, S)$ be a Coxeter system and let $\ell$ denote the corresponding length function. For $s, t \in S$ we denote the order of $st$ in $W$ by $m_{st}$. The \textit{rank} of a Coxeter system is the cardinality of the set $S$. The \textit{Coxeter diagram} corresponding to $(W, S)$ is the labeled graph $(S, E(S))$, where $E(S) = \{ \{s, t \} \mid m_{st}>2 \}$ and where each edge $\{s,t\}$ is labeled by $m_{st}$ for all $s, t \in S$. We call a Coxeter system \textit{irreducible}, if the underlying graph is connected; otherwise we call it \textit{reducible}. It is well-known that the pair $(\langle J \rangle, J)$ is a Coxeter system (cf. \cite[Ch. IV, $\S 1$ Theorem $2$]{Bo68}). A subset $J \subseteq S$ is called \textit{spherical} if $\langle J \rangle$ is finite; for $k \in \NN$ the Coxeter system is called \textit{$k$-spherical}, if $\langle J \rangle$ is spherical for each subset $J$ of $S$ with $\vert J \vert \leq k$. Given a spherical subset $J$ of $S$, there exists a unique element of maximal length in $\langle J \rangle$, which we denote by $r_J$ (cf. \cite[Corollary $2.19$]{AB08}). For $s_1, \ldots, s_k \in S$ we call $s_1 \cdots s_k$ \textit{reduced}, if $\ell(s_1 \cdots s_k) = k$. Similar as in \cite[Ch. $2.2$]{Ro89} we define for distinct $s, t \in S$ with $m_{st} < \infty$ the element $p(s, t)$ to mean $stst \ldots$ with $\ell(p(s, t)) = m_{st}$; e.g. if $m_{st} = 3$, we have $p(s, t) = sts$. It is well-known that for $w\in W, s\in S$ with $\ell(sw) < \ell(w)$, there exists $w' \in W$ such that $\ell(w') = \ell(w) -1$ and $w = sw'$ (cf. \cite[exchange condition on p.$79$]{AB08}).

\begin{convention}
	For the rest of this paper we let $(W, S)$ be a Coxeter system.
\end{convention}

\subsection*{Chamber systems}

Let $I$ be a set. A \textit{chamber system over $I$} is a pair $\left(\C, (\sim_i)_{i \in I} \right)$ consisting of a non-empty set $\C$ whose elements are called \textit{chambers} and where $\sim_i$ is an equivalence relation on the set of chambers for each $i\in I$. Given $i\in I$ and $c, d \in \C$, then $c$ is called \textit{$i$-adjacent} to $d$ if $c \sim_i d$. The chambers $c, d$ are called \textit{adjacent}, if they are $i$-adjacent for some $i\in I$. If we restrict $\sim_i$ to $\emptyset \neq \mathcal{X} \subseteq \C$ for each $i\in I$, then $\left( \mathcal{X}, (\sim_i)_{i\in I} \right)$ is a chamber system over $I$ and we call it the \textit{induced} chamber system.

A \textit{gallery} in $(\C, (\sim_i)_{i\in I})$ is a sequence $(c_0, \ldots, c_k)$ such that $c_{\mu} \in \C$ for all $0 \leq \mu \leq k$ and $c_{\mu-1}, c_{\mu}$ are adjacent for all $1 \leq \mu \leq k$. Given a gallery $G = (c_0, \ldots, c_k)$, then we put $\beta(G) := c_0$ and $\epsilon(G) := c_k$. If $G$ is a gallery and if $c, d \in \C$ are such that $\beta(G) = c$ and $\epsilon(G) = d$ then we say that $G$ is a \textit{gallery from $c$ to $d$} or \textit{$G$ joins $c$ and $d$}. The chamber system $(\C, (\sim_i)_{i\in I})$ is said to be \textit{connected}, if for any two chambers there exists a gallery joining them. A gallery $G$ will be called \textit{closed} if $\beta(G) = \epsilon(G)$. Given two galleries $G = (c_0, \ldots, c_k)$ and $H = (d_0, \ldots, d_l)$ such that $\epsilon(G) = \beta(H)$, then $GH$ denotes the gallery $(c_0, \ldots, c_k = d_0, \ldots, d_l)$.

Let $J$ be a subset of $I$. A \textit{$J$-gallery} is a gallery $(c_0, \ldots, c_k)$ such that for each $1 \leq \mu \leq k$ there exists an index $j\in J$ with $c_{\mu -1} \sim_j c_{\mu}$.

\subsection*{Homotopy of galleries and simple connectedness}

In the context of chamber systems there is the notion of $m$-homotopy and $m$-simple connectedness for each $m \in \NN$. In this paper we are only concerned with the case $m=2$. Therefore our definitions are always to be understood as a specialisation of the general theory to the case $m=2$.

Let $(\C, (\sim_i)_{i\in I})$ be a chamber system over a set $I$. Two galleries $G$ and $H$ are said to be \textit{elementary homotopic} if there exist two galleries $X, Y$ and two $J$-galleries $G', H'$ for some $J \subseteq I$ of cardinality at most $2$ such that $G = XG'Y, H = XH'Y$. Two galleries $G, H$ are said to be \textit{homotopic} if there exists a finite sequence $G_0, \ldots, G_l$ of galleries such that $G_0 = G, G_l = H$ and such that $G_{\mu-1}$ is elementary homotopic to $G_{\mu}$ for all $1 \leq \mu \leq l$.

If two galleries $G, H$ are homotopic, then it follows by definition that $\beta(G) = \beta(H)$ and $\epsilon(G) = \epsilon(H)$. A closed gallery $G$ is said to be \textit{null-homotopic} if it is homotopic to the gallery $(\beta(G))$. The chamber system $(\C, (\sim_i)_{i\in I})$ is called \textit{simply connected} if it is connected and if each closed gallery is null-homotopic.

\subsection*{Buildings}

A \textit{building of type $(W, S)$} is a pair $\Delta = (\C, \delta)$ where $\C$ is a non-empty set and where $\delta: \C \times \C \to W$ is a \textit{distance function} satisfying the following axioms, where $x, y\in \C$ and $w = \delta(x, y)$:
\begin{enumerate}[label=(WD\arabic*), leftmargin=*]
	\item $w = 1_W$ if and only if $x=y$.
	
	\item if $z\in \C$ satisfies $s := \delta(y, z) \in S$, then $\delta(x, z) \in \{w, ws\}$, and if, furthermore, $\ell(ws) = \ell(w) +1$, then $\delta(x, z) = ws$.
	
	\item if $s\in S$, there exists $z\in \C$ such that $\delta(y, z) = s$ and $\delta(x, z) = ws$.
\end{enumerate}
The \textit{rank} of $\Delta$ is the rank of the underlying Coxeter system. The elements of $\C$ are called \textit{chambers}. Given $s\in S$ and $x, y \in \C$, then $x$ is called \textit{$s$-adjacent} to $y$, if $\delta(x, y) = s$. The chambers $x, y$ are called \textit{adjacent}, if they are $s$-adjacent for some $s\in S$. A \textit{gallery} from $x$ to $y$ is a sequence $(x = x_0, \ldots, x_k = y)$ such that $x_{l-1}$ and $x_l$ are adjacent for every $1 \leq l \leq k$; the number $k$ is called the \textit{length} of the gallery. Let $(x_0, \ldots, x_k)$ be a gallery and suppose $s_i \in S$ such that $\delta(x_{i-1}, x_i) = s_i$. Then $(s_1, \ldots, s_k)$ is called the \textit{type} of the gallery. A gallery from $x$ to $y$ of length $k$ is called \textit{minimal} if there is no gallery from $x$ to $y$ of length $<k$.

Given a subset $J \subseteq S$ and $x\in \C$, the \textit{$J$-residue of $x$} is the set $R_J(x) := \{y \in \C \mid \delta(x, y) \in \langle J \rangle \}$. A \textit{residue} is a subset $R$ of $\C$ such that there exist $J \subseteq S$ and $x\in \C$ with $R = R_J(x)$. Since the subset $J$ is uniquely determined by $R$, the set $J$ is called the \textit{type} of $R$ and the \textit{rank} of $R$ is defined to be the cardinality of $J$. Each $J$-residue is a building of type $(\langle J \rangle, J)$ with the distance function induced by $\delta$ (cf. \cite[Corollary $5.30$]{AB08}). Given $x\in \C$ and a $J$-residue $R \subseteq \C$, then there exists a unique chamber $z\in R$ such that $\ell(\delta(x, y)) = \ell(\delta(x, z)) + \ell(\delta(z, y))$ holds for every $y\in R$ (cf. \cite[Proposition $5.34$]{AB08}). The chamber $z$ is called the \textit{projection of $x$ onto $R$} and is denoted by $\proj_R x$. Moreover, if $z = \proj_R x$ we have $\delta(x, y) = \delta(x, z) \delta(z, y)$ for each $y\in R$. A residue is called \textit{spherical} if its type is a spherical subset of $S$. A building is \textit{spherical} if its type is spherical; for $k \in \NN$ it is called \textit{$k$-spherical} if $(W, S)$ is $k$-spherical. Let $R$ be a spherical $J$-residue. Then $x, y \in R$ are called \textit{opposite in $R$} if $\delta(x, y) = r_J$. A \textit{panel} is a residue of rank $1$. An \textit{$s$-panel} is a panel of type $\{s\}$ for $s\in S$. For $c \in \C$ and $s\in S$ we denote the $s$-panel containing $c$ by $\P_s(c)$. The building $\Delta$ is called \textit{thick}, if each panel of $\Delta$ contains at least three chambers. For an $s$-panel $P$ we will also write $P_s$ instead of $P$ to underline the type of $P$. We denote the set of all panels in a given building $\Delta$ by $\P_{\Delta}$. For $c\in \C$ and $k \in \NN$ we denote the union of all residues of rank at most $k$ containing $c$ by $E_k(c)$. Let $\Delta = (\C, \delta), \Delta' =(\C', \delta')$ be two buildings of type $(W, S)$ and let $X\subseteq \C, X' \subseteq \C'$. Then a map $\phi:X \to X'$ is called \textit{isomorphism} if it is bijective and preserves the distance functions. In this case we will write $X \cong X'$. We denote the set of all isomorphisms from a building $\Delta$ to itself by $\mathrm{Aut}(\Delta)$. 

\begin{remark}
	Our definition of a building agrees with the definition given in \cite[Ch. $3$]{Ro89} (cf. \cite[Proposition $5.23$]{AB08}).
\end{remark}

\subsection*{Coxeter buildings}

\begin{example}
	We define $\delta: W \times W \to W, (x, y) \mapsto x^{-1}y$. Then $\Sigma(W, S) := (W, \delta)$ is a building of type $(W, S)$. Moreover, $W$ acts on $\Sigma(W, S)$ by left-multiplication.
\end{example}

A \textit{reflection} is an element of $W$ that is conjugate to an element of $S$. For $s\in S$ we let $\alpha_s := \{ w\in W \mid \ell(sw) > \ell(w) \}$ be the \textit{simple root} corresponding to $s$. A \textit{root} is a subset $\alpha \subseteq W$ such that $\alpha = v\alpha_s$ for some $v\in W$ and $s\in S$. We denote the set of all roots by $\Phi(W, S)$. The set $\Phi(W, S)_+ = \{ \alpha \in \Phi(W, S) \mid 1_W \in \alpha \}$ is the set of all \textit{positive roots} and $\Phi(W, S)_- = \{ \alpha \in \Phi(W, S) \mid 1_W \notin \alpha \}$ is the set of all \textit{negative roots}. For $J \subseteq S$ we put $\Phi(W, S)^J := \Phi(\langle J \rangle, J)$ (resp. $\Phi(W, S)_+^J, \Phi(W, S)_-^J$). For each root $\alpha \in \Phi(W, S)$ we denote the \textit{opposite} root by $-\alpha$ and we let $r_{\alpha}$ be the unique reflection which interchanges these two roots. A pair of distinct roots $\{ \alpha, \beta \} \subseteq \Phi(W, S)$ is called \textit{prenilpotent} if both $\alpha \cap \beta$ and $(-\alpha) \cap (-\beta)$ are non-empty sets. Given such a pair $\{ \alpha, \beta \}$ we will write $\left[ \alpha, \beta \right] := \{ \gamma \in \Phi(W, S) \mid \alpha \cap \beta \subseteq \gamma \text{ and } (-\alpha) \cap (-\beta) \subseteq -\gamma \}$ and $(\alpha, \beta) := \left[ \alpha, \beta \right] \backslash \{ \alpha, \beta \}$. For a pair $\{ \alpha, \beta \} \subseteq \Phi$ of prenilpotent roots there are two possibilities: either $o(r_{\alpha} r_{\beta}) < \infty$ or $o(r_{\alpha} r_{\beta}) = \infty$. The second case implies $\alpha \subsetneq \beta$ or $\beta \subsetneq \alpha$.

\begin{convention}
	For the rest of this paper we let $(W, S)$ be a Coxeter system of finite rank and $\Phi := \Phi(W, S)$ (resp. $\Phi_+, \Phi_-$) be the set of roots (resp. positive roots, negative roots). 
\end{convention}

For $\alpha \in \Phi$ we denote by $\partial \alpha$ (resp. $\partial^2 \alpha$) the set of all panels (resp. spherical residues of rank $2$) stabilized by $r_{\alpha}$. The set $\partial \alpha$ is called the \textit{wall} associated to $\alpha$. For a gallery $G = (c_0, \ldots, c_k)$ we say that $G$ \textit{crosses the wall $\partial \alpha$} if $\{ c_{i-1}, c_i \} \in \partial \alpha$ for some $1 \leq i \leq k$.

\begin{lemma}\label{CM06Prop2.7}
	Let $\alpha \in \Phi$ and let $P, Q \in \partial \alpha$. Then there exist a sequences $P_0 = P, \ldots, P_n = Q$ of panels in $\partial \alpha$ and a sequence $R_1, \ldots, R_n$ of spherical rank $2$ residues in $\partial^2 \alpha$ such that $P_{i-1}, P_i$ are distinct and contained in $R_i$.
\end{lemma}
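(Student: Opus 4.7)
I would proceed by induction on $k := \ell(\delta(c_P, c_Q))$, where $c_P$ and $c_Q$ are the unique chambers of $P \cap \alpha$ and $Q \cap \alpha$ respectively (well-defined, since $r_\alpha$ is an involution swapping the two chambers of every panel in $\partial \alpha$). The base case $k = 0$ forces $c_P = c_Q$, whence both $P$ and $Q$ equal $\{c_P, r_\alpha(c_P)\}$, so $P = Q$ and one takes $n = 0$.

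For the inductive step, fix a minimal gallery $\Gamma = (c_P = c_0, c_1, \ldots, c_k = c_Q)$; by the convexity of $\alpha$, each $c_i$ lies in $\alpha$. Write $P = \{c_0, c_0 s\}$ with $c_0 s \in -\alpha$, and let $s_1 \in S$ be the type of the first edge of $\Gamma$, noting $s_1 \ne s$ since $c_1 \in \alpha$. The natural candidate for a rank-$2$ residue on the wall adjacent to $P$ is $R := R_{\{s, s_1\}}(c_0)$. This residue contains $P$ and is stabilised by $r_\alpha$ (the image $r_\alpha(R)$ is a residue of the same type through $r_\alpha(P) = P$, hence equals $R$); when $m_{s, s_1} < \infty$, $R$ is spherical, so $R \in \partial^2 \alpha$. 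Inside the $(2m_{s, s_1})$-gon $R$, the restriction $r_\alpha|_R$ fixes exactly two panels of $R$, namely $P$ and a second panel $P_1$ at the opposite end of the half-apartment $R \cap \alpha$; one then verifies via the projection $\proj_R(c_Q)$ together with a judicious choice of $\Gamma$ arising from the exchange condition that $c_{P_1} \in P_1 \cap \alpha$ satisfies $\ell(\delta(c_{P_1}, c_Q)) < k$. Applying the inductive hypothesis to $(P_1, Q)$ produces the tail of the desired sequence, which is prepended with $P_0 = P$ and $R_1 = R$.

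The principal obstacle is the possibility $m_{s, s_1} = \infty$: then $R$ is not spherical, hence not eligible, and it may happen that \emph{every} left-descent $s_1$ of $\delta(c_P, c_Q)$ yields an infinite $m_{s, s_1}$, so no change of $\Gamma$ makes the above strategy work directly. Handling this case requires a more delicate argument -- either re-routing through a spherical rank-$2$ residue on the wall that does not involve the first edge of $\Gamma$, or replacing the induction invariant by a finer quantity (for example, a pairwise panel-distance or a quantity involving $\ell(\delta(c_P, r_\alpha(c_Q)))$) that still decreases under a suitable transport along the wall. This non-spherical subcase, which draws on the global geometry of walls and on the exchange/projection machinery in Coxeter complexes, is the technical heart of the argument.
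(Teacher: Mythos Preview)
The paper gives no self-contained argument here: it simply cites \cite[Proposition~2.7]{CM06} for the existence of the chain of panels, and then observes that each $R_i$ lies in $\partial^2\alpha$ because it contains a panel $P_i \in \partial\alpha$. So there is no detailed proof in the paper to compare your induction against beyond that citation.

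Your scheme, however, has a genuine gap already in the spherical branch. The decrease $\ell(\delta(c_{P_1}, c_Q)) < k$ does not follow from what you set up. By the gate property,
\[
\ell(\delta(c_{P_1}, c_Q)) - \ell(\delta(c_P, c_Q)) \;=\; \ell(\delta(c_{P_1}, \proj_R c_Q)) - \ell(\delta(c_P, \proj_R c_Q)),
\]
and the only information your gallery $\Gamma$ yields is that $c_1 \in R$ lies on a minimal gallery to $c_Q$, hence $\proj_R c_Q \neq c_P$. In the path $R \cap \alpha$ of length $m_{s,s_1}-1$ from $c_P$ to $c_{P_1}$ this places $\proj_R c_Q$ at distance at least $1$ from $c_P$, but for $m_{s,s_1} \geq 3$ it may still sit strictly closer to $c_P$ than to $c_{P_1}$, and then the displayed difference is positive: the distance \emph{increases}. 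A different choice of $\Gamma$ only changes the type $s_1$ of the first step, hence $R$, and nothing forces any such choice to push the projection past the midpoint. A workable induction along these lines would need a different invariant---for instance the number of hyperplanes separating $P$ from $Q$, or a gallery distance between panels---rather than $\ell(\delta(c_P, c_Q))$.

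As for the case $m_{s,s_1} = \infty$: you are right that this is the crux, but your proposed remedies (re-routing, swapping the invariant) remain gestures. The argument in \cite{CM06} is not a local detour; it proceeds through an equivalence of several characterisations of panels on a given wall and uses global combinatorics of the reflection. A direct proof in the style you sketch would essentially have to reproduce that analysis.
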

\begin{proof}
	This is a consequence of \cite[Proposition $2.7$]{CM06}. The fact that $P_0, \ldots, P_n \in \partial \alpha$ follows from the implication (iii)$\Rightarrow$(ii) in loc.cit. Since $P_i \subseteq R_i$, we infer $R_i \in \partial^2 \alpha$.
\end{proof}

Let $\Delta = (\C, \delta)$ be a building of type $(W, S)$. A subset $\Sigma \subseteq \C$ is called \textit{apartment} if it is isomorphic to $W$. A subset $\alpha \subseteq \C$ is called a \textit{root} if it is isomorphic to $\alpha_s$ for some $s\in S$.

\subsection*{Two conditions for buildings}

\begin{example}
	Let $\Delta = (\C, \delta)$ be a building of type $(W, S)$. We define $x \sim_s y$ if $\delta(x,y) \in \langle s \rangle$. Then $\sim_s$ is an equivalence relation and $(\C, (\sim_s)_{s\in S})$ is a chamber system.
\end{example}

We now introduce two conditions for a building:

\begin{enumerate}[leftmargin=*, align=left, itemindent=!]
	\item[$\lco$] A building $\Delta$ satisfies Condition $\lco$ if it is $2$-spherical and if $R$ is a rank $2$ residue of $\Delta$ containing a chamber $c$, then the induced chamber system defined by the set of chambers opposite $c$ inside $R$ is connected.

	\item[$\lsco$] A building $\Delta$ satisfies Condition $\lsco$ if it is $3$-spherical and if $R$ is a rank $3$ residue of $\Delta$ containing a chamber $c$, then the induced chamber system defined by the set of chambers opposite $c$ inside $R$ is simply connected.
\end{enumerate}

Buildings which satisfy both conditions are discussed in \cite[Chapter $9$]{DMVM11} and \cite[Introduction]{HNVM16}. In \cite{DMVM11} it is stated that a $3$-spherical building of simply-laced type (i.e. $m_{st} \in \{2, 3\}$ for all $s, t \in S$) in which every panel contains at least $4$ chambers satisfies both conditions $\lco$ and $\lsco$. Moreover, (following \cite{DMVM11} and \cite{HNVM16}) a (general) $3$-spherical building satisfies the Conditions $\lco$ and $\lsco$ if every panel contains at least $6$ chambers.

\subsection*{Spherical Moufang buildings}

Let $\Delta = (\C, \delta)$ be a thick irreducible spherical building of type $(W, S)$ and of rank at least $2$. For a root $\alpha$ of $\Delta$ we define the \textit{root group} $U_{\alpha}$ as the set of all automorphisms fixing $\alpha$ pointwise and fixing every panel $P$ pointwise, where $\vert P \cap \alpha \vert = 2$. The building $\Delta$ is called \textit{Moufang} if for every root $\alpha$ of $\Delta$ the root group $U_{\alpha}$ acts simply transitive on the set of apartments containing $\alpha$.

Let $\Delta = (\C, \delta)$ be a Moufang building of type $(W, S)$, let $\Sigma$ be an apartment of $\Delta$ and let $c\in \Sigma$. Identify the set of roots in $\Sigma$ with $\Phi$ and let $G = \langle U_{\alpha} \mid \alpha \in \Phi \rangle$. Let $H = \Fix_G(\Sigma)$ and $B_+ = \Stab_G(c) = H \langle U_{\alpha} \mid c \in \alpha \in \Phi \rangle$. Then $\Delta(G, B_+) = (G/B_+, \delta)$ is a building of type $(W, S)$, where $\delta: G/B_+ \times G/B_+ \to W, (gB_+, hB_+) \mapsto w$, where $g^{-1}h \in B_+ w B_+$ (for more information we refer to \cite[Section $6,7$]{AB08}). Moreover, the buildings $\Delta$ and $\Delta(G, B_+)$ are isomorphic. By \cite[Corollary $7.68$]{AB08} and \cite[Lemma $(7.4)$]{Ro89} every chamber $gB_+$ can be written in the form $u_1 n_{s_1} \cdots u_k n_{s_k}B_+$ with $u_i \in U_{\alpha_{s_i}}$ and $s_1 \cdots s_k = \delta(B_+, gB_+)$ is reduced. If we fix the \textit{type} $(s_1, \ldots, s_k)$ of $u_1n_{s_1} \cdots u_kn_{s_k}B_+$, then the elements $u_i \in U_{\alpha_{s_i}}$ are uniquely determined.

\subsection*{Twin buildings}

Let $\Delta_+ = (\C_+, \delta_+), \Delta_- = (\C_-, \delta_-)$ be two buildings of the same type $(W, S)$. A \textit{codistance} (or \textit{twinning}) between $\Delta_+$ and $\Delta_-$ is a mapping $\delta_*: (\C_+ \times \C_-) \cup (\C_- \times \C_+) \to W$ satisfying the following axioms, where $\epsilon \in \{+,-\}, x\in \Cp, y\in \Cm$ and $w = \delta_*(x, y)$:
\begin{enumerate}[label=(Tw\arabic*), leftmargin=*]
	\item $\delta_*(y, x) = w^{-1}$;
	
	\item if $z\in \Cm$ is such that $s := \delta_{-\epsilon}(y, z) \in S$ and $\ell(ws) = \ell(w) -1$, then $\delta_*(x, z) = ws$;
	
	\item if $s\in S$, there exists $z\in \C_{-\epsilon}$ such that $\delta_{-\epsilon}(y, z) = s$ and $\delta_*(x, z) = ws$.
\end{enumerate}
A \textit{twin building of type $(W, S)$} is a triple $\Delta = (\Delta_+, \Delta_-, \delta_*)$, where $\Delta_+ = (\C_+, \delta_+), \Delta_- = (\C_-, \delta_-)$ are buildings of type $(W, S)$ and where $\delta_*$ is a twinning between $\Delta_+$ and $\Delta_-$. The twin building is called \textit{thick}, if $\Delta_+$ and $\Delta_-$ are thick. The twin building $\Delta$ satisfies Condition $\lco$ if both buildings $\Delta_+, \Delta_-$ satisfy Condition $\lco$. For $\epsilon \in \{+,-\}$ and $c\in \C_{\epsilon}$ we define $c^{\mathrm{op}} = \{ d \in \C_{-\epsilon} \mid \delta_*(c, d) = 1_W \}$. Let $\Sigma_+ \subseteq \C_+$ and $\Sigma_- \subseteq \C_-$ be apartments of $\Delta_+$ and $\Delta_-$, respectively. Then the set $\Sigma := \Sigma_+ \cup \Sigma_-$ is called \textit{twin apartment} if for all $\epsilon \in \{+, -\}$ and $x \in \Sigma_{\epsilon}$ there exists exactly one $y\in \Sigma_{-\epsilon}$ with $\delta_*(x, y) = 1_W$. Moreover, for any $c_+ \in \C_+, c_- \in \C_-$ with $\delta_*(c_+, c_-) = 1_W$ there exists a unique twin apartment $\Sigma$ containing $c_+$ and $c_-$ (cf. \cite[Proposition $5.179(1)$]{AB08}). Let $\Delta' = (\Delta_+', \Delta_-', \delta_*')$ be another twin building of type $(W, S)$. A mapping $\phi: \C_+ \cup \C_- \to \C_+' \cup \C_-'$ is called \textit{isomorphism}, if it preserves the sign, the distance and the codistance.

\begin{lemma}\label{E1thick}
	Let $\Delta = (\Delta_+, \Delta_-, \delta_*)$ be a twin building of type $(W, S)$ and let $c$ be a chamber of $\Delta$ such that $\vert \P_s(c) \vert \geq 3$ holds for all $s\in S$. Then $\Delta$ is thick.
\end{lemma}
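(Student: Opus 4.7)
The plan is to propagate the thickness hypothesis at $c$ to every panel of $\Delta$ in three stages. Without loss of generality $c \in \C_+$. First I would show that every chamber of $\C_+$ has all its panels of size at least $3$; then that the same holds for some chamber $c^* \in c^{\mathrm{op}}$; and finally, by the argument of the first stage applied to $c^*$, for every chamber of $\C_-$.

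For the intra-half propagation (stages one and three), I would induct on $\ell(\delta_\epsilon(c_0, d))$, where $c_0$ is the fixed thick chamber on that side. In the inductive step, writing $\delta_\epsilon(c_0, d) = w's$ with $\ell(w') = \ell(\delta_\epsilon(c_0, d)) - 1$, the chamber $d' \in \P_s(d)$ with $\delta_\epsilon(c_0, d') = w'$ has every panel of size $\geq 3$ by induction. For $r = s$ the panels $\P_r(d)$ and $\P_r(d')$ coincide; for $r \neq s$, both lie in the same $\{r,s\}$-residue, and in a rank $2$ building all panels of the same type have equal cardinality, so $|\P_r(d)| = |\P_r(d')| \geq 3$.

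The middle step is the genuinely twin one. Existence of some $c^* \in c^{\mathrm{op}}$ follows from iteratively applying (Tw3) along a right descent of the current codistance, which strictly decreases its length. Fix such a $c^*$ and $s \in S$. Applying (Tw3) to $(c^*, c, 1_W, s)$ yields $u_1 \in \P_s(c) \setminus \{c\}$ with $\delta_*(c^*, u_1) = s$, and then (Tw2) applied to $(c^*, u_1, u, s)$ with $w = s$ forces $\delta_*(c^*, u) = 1_W$ for every other $u \in \P_s(c)$. Using $|\P_s(c)| \geq 3$, I pick $u_2 \in \P_s(c) \setminus \{c, u_1\}$, so that $\delta_*(c^*, u_2) = 1_W = \delta_*(c^*, c)$.

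Two further applications of (Tw3), to $(c, c^*, 1_W, s)$ and to $(u_2, c^*, 1_W, s)$, produce chambers $z_1, z_2 \in \P_s(c^*) \setminus \{c^*\}$ with $\delta_*(c, z_1) = s = \delta_*(u_2, z_2)$. The key closing observation is $z_1 \neq z_2$: if they were equal, (Tw1) would give $\delta_*(z_1, c) = s$, and (Tw2) applied to $(z_1, c, u_2)$ with $\ell(s\cdot s) = 0 = \ell(s) - 1$ would force $\delta_*(z_1, u_2) = 1_W$, contradicting $\delta_*(u_2, z_2) = s$. Hence $\{c^*, z_1, z_2\} \subseteq \P_s(c^*)$ and $|\P_s(c^*)| \geq 3$, completing the middle stage. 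The main obstacle is exactly this step: producing \emph{two} distinct non-trivial chambers of $\P_s(c^*)$ requires combining the hypothesis $|\P_s(c)| \geq 3$ (which supplies the auxiliary chamber $u_2$) with the rigidity of codistance along a panel via (Tw2) (which separates $z_1$ from $z_2$).
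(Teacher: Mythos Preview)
Your stages 2 and 3 are correct; in fact stage 2 is a bare-hands proof of the relevant instance of \cite[Corollary~5.153]{AB08}, which is precisely what the paper invokes. The gap is in stage 1. The assertion that ``in a rank $2$ building all panels of the same type have equal cardinality'' fails when $m_{rs} = \infty$: the $\{r,s\}$-residue is then a building of type $I_2(\infty)$, i.e.\ a tree (chambers are edges, panels are vertices), and vertex valencies within one bipartition class of a tree need not be constant. So your intra-half propagation breaks down exactly where the twinning becomes indispensable. If you add the hypothesis that $(W,S)$ is $2$-spherical your proof goes through, and since that is the only context in which the paper later applies the lemma, your argument would cover the applications---but not the lemma as stated.

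The paper's proof avoids stage 1 altogether and uses opposition throughout. First, your stage 2 (packaged as \cite[Corollary~5.153]{AB08}) gives $\vert \P_s(d) \vert \geq 3$ for every $d \in c^{\mathrm{op}}$. Then, for an arbitrary $c' \in \C_\epsilon$, one inducts on $\ell(\delta_*(c',d))$ with $d \in c^{\mathrm{op}}$ to reach some $d'' \in c^{\mathrm{op}} \cap (c')^{\mathrm{op}}$: the inductive step requires a chamber in $\P_s(d)$ that remains opposite $c$ while strictly decreasing $\ell(\delta_*(c',\cdot))$, and the thickness at $d$ just established is exactly what guarantees one exists (one chamber of $\P_s(d)$ has codistance $s$ from $c$, one---namely $d$---has the longer codistance from $c'$, and a third chamber avoids both). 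Two applications of the panel bijection for opposite chambers then give $\vert \P_s(c') \vert = \vert \P_s(d'') \vert = \vert \P_s(c) \vert \geq 3$, valid for arbitrary $(W,S)$.
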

\begin{proof}
	Let $\epsilon \in \{+, -\}$ and let $c\in \C_{\epsilon}$. For each $d\in c^{\mathrm{op}}$ and each $s\in S$ the $s$-panel containing $d$ contains at least three elements by \cite[Corollary $5.153$]{AB08}. Let $c' \in \C_{\epsilon}$ and $d \in c^{\mathrm{op}}$. If $c' \in d^{\mathrm{op}}$ the $s$-panel containing $c'$ contains at least three chambers by \cite[Corollary $5.153$]{AB08}.	Otherwise, there exists a chamber $d' \in E_1(d)$ such that $d' \in c^{\mathrm{op}}$ and $\ell(\delta(c', d')) = \ell(\delta(c', d)) -1$. Using induction we obtain a chamber $d'' \in \C_{-\epsilon}$ such that $d'' \in c^{\mathrm{op}} \cap (c')^{\mathrm{op}}$. Applying \cite[Corollary $5.153$]{AB08} twice, we obtain that the $s$-panel containing $c'$ contains at least three chambers for each $s\in S$. Thus $\Delta_{\epsilon}$ is a thick building. The thickness of $\Delta_{-\epsilon}$ follows similarly.
\end{proof}

\begin{lemma}\label{Lemma: Apartment of foundation is contained in Apartment of twin building}
	Let $\Delta = (\Delta_+, \Delta_-, \delta_*)$ be a thick $2$-spherical twin building of type $(W, S)$, let $\epsilon \in \{+, -\}, c\in \C_{\epsilon}$ and for each $J \subseteq S$ with $\vert J \vert = 2$ we let $\Sigma_J$ be an apartment of $R_J(c)$ containing $c$ and such that $\Sigma_{\{r, s\}} \cap \P_s(c) = \Sigma_{\{s, t\}} \cap \P_s(c)$ for all $r\neq s \neq t \in S$. Then there exists a twin apartment $\Sigma$ such that $\Sigma_J \subseteq \Sigma$.
\end{lemma}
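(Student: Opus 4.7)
The plan is to build the desired twin apartment in the form $\Sigma(c,d)$ for a suitably chosen chamber $d \in \C_{-\epsilon}$ opposite to $c$; the twin apartment is then the unique one through the opposite pair $(c,d)$ provided by \cite[Proposition~$5.179(1)$]{AB08}.

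The first step is a reformulation via codistances. For each $2$-subset $J \subseteq S$, let $c_J \in \Sigma_J$ be the unique chamber with $\delta_\epsilon(c,c_J) = r_J$; since $R_J(c)$ is a spherical rank-$2$ building, $\Sigma_J$ is determined by the opposite pair $(c,c_J)$. In a twin apartment $\Sigma(c,d)$ with $\delta_*(c,d) = 1_W$, a chamber $c' \in \C_\epsilon$ belongs to the $\epsilon$-half if and only if $\delta_*(c',d) = \delta_\epsilon(c,c')^{-1}$ (as one verifies in the standard twin apartment model $(W,W)$); applying this to $c' = c_J$ gives $\delta_*(c_J,d) = r_J$, and this condition characterises precisely when $\Sigma_J \subseteq \Sigma(c,d)$. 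Moreover, this equation for any single $J$ already forces $\delta_*(c,d) = 1_W$ (so $d \in c^{\mathrm{op}}$ is automatic) via (Tw2) applied along a minimal $\epsilon$-gallery from $c_J$ back to $c$. The lemma is therefore reduced to producing a single $d \in \C_{-\epsilon}$ with $\delta_*(c_J,d) = r_J$ for every $2$-subset $J \subseteq S$.

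To produce $d$, I would start with an arbitrary $d_0 \in c^{\mathrm{op}}$, which exists in any thick twin building (combine Lemma~\ref{E1thick} with the standard existence of opposites). The twin apartment $\Sigma(c,d_0)$ intersects each $R_J(c)$ in an apartment $\Sigma_J^{(0)}$ with opposite chamber $c_J^{(0)}$, and one wishes to replace $c_J^{(0)}$ by the prescribed $c_J$ simultaneously for all $J$. Iteratively applying axiom (Tw3), I would modify $d_0$ along panels in $\C_{-\epsilon}$ so as to move each $c_J^{(0)}$ to $c_J$. The panel-compatibility hypothesis is precisely the input required to make these modifications mutually consistent: on any common $s$-panel of $c$ shared by two residues $R_{\{r,s\}}(c)$ and $R_{\{s,t\}}(c)$, the $s$-neighbour of $c$ demanded by the two apartments $\Sigma_{\{r,s\}}$ and $\Sigma_{\{s,t\}}$ coincide, so the (Tw3)-adjustments needed for different $J$'s do not clash on shared panels.

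The principal technical obstacle is this last step, namely organising the iterative (Tw3)-adjustments so that all codistance equations $\delta_*(c_J,d)=r_J$ hold simultaneously. I expect this to reduce, via the panel-compatibility hypothesis, to a finite analysis inside each rank-$3$ residue of $\Delta_\epsilon$ through $c$, where three rank-$2$ apartments $\Sigma_{\{r,s\}}, \Sigma_{\{s,t\}}, \Sigma_{\{r,t\}}$ meet along the common panels of $c$; higher-rank cases should then be pieced together from such rank-$3$ subsystems. The $2$-spherical hypothesis enters essentially throughout, as it makes every $R_J(c)$ a spherical building on which the "apartment through $c$ = chamber opposite $c$" description—and hence the codistance reformulation—is valid.
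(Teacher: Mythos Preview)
The paper does not give a self-contained argument here; its entire proof is the citation ``This follows from \cite[Theorem~6.3.6]{WenDiss}.'' So there is no detailed proof in the paper to compare your approach against directly.

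Your overall strategy is sound and standard: realise the sought twin apartment as $\Sigma(c,d)$ for a well-chosen $d\in c^{\mathrm{op}}$, and reformulate $\Sigma_J\subseteq\Sigma(c,d)$ as the codistance equation $\delta_*(c_J,d)=r_J$. Both the characterisation of $\Sigma(c,d)_\epsilon$ you invoke and the reduction to this family of codistance conditions are correct, and you rightly isolate the simultaneous realisation of all these conditions as the crux.

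The gap lies in your justification of that last step. Your heuristic---that panel-compatibility on the $c$-side makes the (Tw3)-adjustments on the $d$-side ``not clash on shared panels''---is too coarse. An apartment of a rank-$2$ residue $R_{\{s,t\}}(c)$ through $c$ is \emph{not} determined by its intersection with $E_1(c)$ once $m_{st}\ge 3$: many apartments of a generalised $m$-gon pass through the three chambers $c$, $c_s$, $c_t$. Hence the modification of $d$ needed inside $R_{\{s,t\}}(d)$ to realise $\Sigma_{\{s,t\}}$ is not governed solely by what happens on $\P_s(d)$ and $\P_t(d)$; it genuinely involves the whole rank-$2$ residue, and the interaction between moves in overlapping residues $R_{\{r,s\}}(d)$ and $R_{\{s,t\}}(d)$ is more delicate than a panel-by-panel check. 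Your closing remark that the problem should localise to rank-$3$ residues is the right instinct, but the actual argument---presumably supplied in the cited thesis, and plausibly using either an explicit inductive construction or the Moufang structure available for thick $2$-spherical twin buildings of rank at least $3$---is missing from your proposal.
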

\begin{proof}
	This follows from \cite[Theorem $6.3.6$]{WenDiss}.
\end{proof}

\subsection*{RGD-systems}

An \textit{RGD-system of type $(W, S)$} is a pair $\left( G, \left( U_{\alpha} \right)_{\alpha \in \Phi}\right)$ consisting of a group $G$ together with a family of subgroups $U_{\alpha}$ (called \textit{root groups}) indexed by the set of roots $\Phi$, which satisfies the following axioms, where $H := \bigcap_{\alpha \in \Phi} N_G(U_{\alpha})$ and $U_{\epsilon} := \langle U_{\alpha} \mid \alpha \in \Phi_{\epsilon} \rangle$ for $\epsilon \in \{+, -\}$:
\begin{enumerate}[label=(RGD\arabic*), leftmargin=*] \setcounter{enumi}{-1}
	\item For each $\alpha \in \Phi$, we have $U_{\alpha} \neq \{1\}$.
	
	\item For each prenilpotent pair $\{ \alpha, \beta \} \subseteq \Phi$, the commutator group $[U_{\alpha}, U_{\beta}]$ is contained in the group $U_{(\alpha, \beta)} := \langle U_{\gamma} \mid \gamma \in (\alpha, \beta) \rangle$.
	
	\item For each $s\in S$ and each $u\in U_{\alpha_s} \backslash \{1\}$, there exist $u', u'' \in U_{-\alpha_s}$ such that the product $m(u) := u' u u''$ conjugates $U_{\beta}$ onto $U_{s\beta}$ for each $\beta \in \Phi$.
	
	\item For each $s\in S$, the group $U_{-\alpha_s}$ is not contained in $U_+$.
	
	\item $G = H \langle U_{\alpha} \mid \alpha \in \Phi \rangle$.
\end{enumerate}

Let $\mathcal{D} = (G, (U_{\alpha})_{\alpha \in \Phi})$ be an RGD-system of type $(W, S)$ and let $H := \bigcap_{\alpha \in \Phi} N_G(U_{\alpha})$ and $B_{\epsilon} := H \langle U_{\alpha} \mid \alpha \in \Phi_{\epsilon} \rangle$ for $\epsilon \in \{+, -\}$. It follows from \cite[Theorem $8.80$]{AB08} that there exists an \textit{associated} twin building $\Delta(\mathcal{D}) = (\Delta_+, \Delta_-, \delta_*)$ of type $(W, S)$ such that $\Delta_+ = (G/B_+, \delta_+)$ and $\Delta_- = (G/B_-, \delta_-)$ on which $G$ acts via left multiplication.

We define $X_s := \langle U_{\alpha_s} \cup U_{-\alpha_s} \rangle$ and $X_{s, t} := \langle X_s \cup X_t \rangle$ for all $s \neq t \in S$. If $(W, S)$ is $2$-spherical, $G = \langle U_{\alpha} \mid \alpha \in \Phi \rangle$ and $\Delta(\mathcal{D})$ satisfies Condition $\lco$, it follows by the Main result of \cite{AM97} that $G$ is isomorphic to the direct limit of the inductive system formed by the groups $L_s := HX_s$ and $L_{s, t} := HX_{s, t}$ for all $s\neq t \in S$. Furthermore, the direct limit of the inductive system formed by the groups $X_s$ and $X_{s, t}$ ($s \neq t \in S$) can be naturally endowed with an RGD-system and is a central extension of $G$ (cf. \cite[Theorem $3.7$]{Ca06}).

For $s\in S$ we define $H_s := \langle m(u) m(v) \mid u, v \in U_{\alpha_s} \backslash \{1\} \rangle$. By \cite[Lemma $3.3$]{Ca06} we have $H = \prod_{s\in S} H_s$, if $G = \langle U_{\alpha} \mid \alpha \in \Phi \rangle$ and if $(W, S)$ is $2$-spherical. By \cite[Consequence $(6)$ on p.$415$]{AB08} we have $H_t^{m(u)} \subseteq H_s H_t$ for all $1 \neq u \in U_{\alpha_s}$. For the rest of this subsection we fix $1\neq e_s \in U_{\alpha_s}$ and put $n_s := m(e_s)$. By \cite[Consequence $(11)$ on p.$416$]{AB08} there exist for every $1 \neq u_s \in U_{\alpha_s}$ elements $\overline{u}_s \in U_{\alpha_s}$ and $b(u_s) \in \langle U_{\alpha_s} \cup H_s \rangle$ such that $n_s u_s n_s = \overline{u}_s n_s b(u_s)$ (note that $\langle U_{\alpha_s} \cup H_s \rangle \cap U_{-\alpha_s} = \{1\}$). As in the case of a Coxeter system, we define $p(n_s, n_t)$ to mean $n_sn_tn_sn_t \ldots$, where $n_s, n_t$ appear $m_{st}$ times, e.g. if $m_{st} = 3$, we have $p(n_s, n_t) = n_s n_t n_s$. By \cite[Lemma $7.3$]{Ro89} we have $p(n_s, n_t) = p(n_t, n_s)$.

\begin{theorem}\label{presentationofanRGDsystem}
	Let $\left( G, (U_{\alpha})_{\alpha \in \Phi} \right)$ be an RGD-system spherical type $(W, S)$ such that $G = \langle U_{\alpha} \mid \alpha \in \Phi \rangle$. Let $B :=\langle U_+, H \rangle$. Then $G$ has the following presentation: as generators we have $\bigcup_{s\in S} H_s \cup \bigcup_{\alpha \in \Phi_+} U_{\alpha}$ and $\{ n_s \mid s\in S \}$ and as relations we have all relations in $B$ and for $s, t \in S, \alpha_s \neq \alpha \in \Phi_+, h \in H_t, u_s \in U_s$ we have the relations
	\begin{align*}
		& n_s h n_s = n_s^2 h^{n_s}, && n_s u_{\alpha} n_s = n_s^2 u_{\alpha}^{n_s}, && n_s u_s n_s = \overline{u}_s n_s b(u_s), && p(n_s, n_t) = p(n_t, n_s),
	\end{align*}
	where $u_{\alpha}^{n_s} \in U_{s\alpha}, n_s^2 \in H_s, h^{n_s} \in H_s H_t$.
\end{theorem}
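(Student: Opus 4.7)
My plan is to let $\tilde{G}$ denote the group defined by the stated presentation, and to show that the canonical homomorphism $\pi \colon \tilde{G} \to G$ sending each generator to its counterpart is an isomorphism. First I verify that $\pi$ is well-defined, i.e.\ that each listed relation holds in $G$. All relations in $B$ hold tautologically. The relations $n_s h n_s = n_s^2 h^{n_s}$ and $n_s u_{\alpha} n_s = n_s^2 u_{\alpha}^{n_s}$ (with $\alpha \neq \alpha_s$, so that $u_{\alpha}^{n_s} \in U_{s\alpha} \subseteq U_+$) are consequences of (RGD2) together with \cite[Consequence (6)]{AB08} and the fact that $n_s^2 \in H_s$. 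The Moufang switch relation $n_s u_s n_s = \overline{u}_s n_s b(u_s)$ is \cite[Consequence (11)]{AB08}, and the braid relation $p(n_s, n_t) = p(n_t, n_s)$ is \cite[Lemma $7.3$]{Ro89}. Surjectivity of $\pi$ is immediate from (RGD4) combined with the Bruhat decomposition $G = BNB$, using $N \subseteq \langle H, \{n_s : s \in S\} \rangle$.

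The main content is injectivity. The strategy is to establish in $\tilde{G}$ a normal form mirroring the one available in $G$ via \cite[Corollary $7.68$]{AB08} and \cite[Lemma $7.4$]{Ro89} (recalled in the subsection above). Namely, I will show that every element of $\tilde{G}$ can be written as
\[
\tilde{u}_1 \tilde{n}_{s_1} \cdots \tilde{u}_k \tilde{n}_{s_k}\, \tilde{b},
\]
with $s_1 \cdots s_k$ reduced, each $\tilde{u}_i$ in the copy of $U_{\alpha_{s_i}}$ inside $\tilde{G}$, and $\tilde{b} \in B$. The braid relations, via Matsumoto's theorem, make the lift $\tilde{n}_w \in \tilde{G}$ well-defined for each $w \in W$. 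The two ``commutation'' relations then let us pull any $\tilde{n}_s$ from the left rightwards past factors from $B$, at the cost of conjugation and of absorbing a factor in $H_s$. Whenever the resulting word on the $\tilde{n}$-side is non-reduced, a braid relation brings two equal letters adjacent and the Moufang switch produces a strictly shorter word on the $\tilde{n}$-side.

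Once the normal form is established in $\tilde{G}$, injectivity of $\pi$ follows from the uniqueness of the analogous decomposition in $G$: if an element of $\tilde{G}$ maps to $1_G$, its normal form is necessarily $1 \cdot \tilde{b}$ with $\pi(\tilde{b}) = 1$, and then $\tilde{b} = 1$ because the presentation includes all relations of $B$, so $\pi$ restricts to an isomorphism on $B$. The main obstacle is the reduction-to-normal-form step: each Moufang switch introduces a factor $b(u_s) \in \langle U_{\alpha_s} \cup H_s \rangle$ that must itself be commuted past later $\tilde{n}$'s, so one must verify that the reduction procedure terminates. I expect to organize termination by a double induction, first on the length of the $W$-image of the word (strictly decreased by each Moufang switch after braid reductions) and then on the length of the $B$-tail; the bookkeeping required to keep control of the newly produced $b(u_s)$ factors is the crux of the argument.
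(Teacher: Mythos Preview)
Your approach is correct and is a genuinely different route from the paper's. The paper does not build a normal form in $\tilde{G}$ at all; it simply observes that Tits already gave a presentation of $G$ in \cite[Corollary~(13.4)]{Ti74} and checks, by a short case distinction, that each of Tits' relations is a consequence of the relations listed here. Your argument is more self-contained (it does not rely on \cite{Ti74}) but correspondingly longer; what it buys is an explicit rewriting procedure in $\tilde{G}$ that the paper never needs.

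Two points on the execution. First, your termination worry is slightly misplaced: rather than viewing the process as rewriting an arbitrary word, it is cleaner to prove that the set of elements of $\tilde{G}$ admitting the normal form $\tilde u_1\tilde n_{s_1}\cdots\tilde u_k\tilde n_{s_k}\tilde b$ is closed under left multiplication by each generator. Closure under $B$ is an induction on $k$ (decompose $b_0u_1=v_1b_1$ with $v_1\in U_{\alpha_{s_1}}$ and $b_1$ in the complementary part of $B$, commute $b_1$ past $\tilde n_{s_1}$, recurse); closure under $\tilde n_s$ splits into the two length cases and reduces to the $B$-case. No global termination argument is required, and the factor $b(u_s)$ produced by the Moufang switch is absorbed by the already-established $B$-closure at length $k-1$.

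Second, the step you pass over most quickly is the one that deserves the most care: when $\ell(sw)<\ell(w)$ you need to rewrite the normal form of type $(s_1,\ldots,s_k)$ as one of type $(s,t_2,\ldots,t_k)$ \emph{inside $\tilde G$}, not just in $G$. This is where the braid relation $p(\tilde n_s,\tilde n_t)=p(\tilde n_t,\tilde n_s)$ must be combined with the commutation relations to move the interspersed $u_i$'s across; concretely, one pulls all $u_i$'s to the left (landing in the product $U_{\beta_1}\cdots U_{\beta_k}$ of root groups along the gallery), applies the braid relation to the bare product of $\tilde n$'s, and then redistributes. Verifying that each intermediate conjugate stays in a positive root group different from the current simple one is a standard rank-$2$ root check, but it should be stated explicitly.
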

\begin{proof}
	All these relations are relations in $G$. Thus it suffices to show that all relations in \cite[Corollary $(13.4)$]{Ti74} can be deduced from the relations in the statement. A case distinction yields the result.
	\begin{comment}
		Let $G_0 := L / \llangle N_{s, t} \mid s, t \in S \rrangle$. Let $r$ be word in $G_0$ such that $r$ is a relation of $G$. Then we can transform $r$ in a word $b_1wb_2$ using only the relations in $G_0$, where $b_1, b_2 \in B$ and $w\in \langle n_s \mid s\in S \rangle$. 
		%Idea: Show that every element in $G$ can be transformed into an element of the form $BWB$ with only using relations of $G_0$: Let $w\in W, u \in U_w, b\in B$ and let $x \in U_s \cup \{ n_s \}$. If $l(sw) = l(w)+1$ and $x=n_s$. Then $u^{n_s^{-1}} \in U_{sw}$ this uses a relation of $P_s$. Thus we have $xuw = u^{n_s} sw$ and we are done. If $x \in U_s$ we have $x^{uw} \in B$ and $xuw = uw x^{uw}$. Now we assume $l(sw) = l(w) -1$. If $x \in U_s$ then we use the relation of $B$ since $U_w$ is a group ($xuw = u'w$ where $u' \in U_w$). If $x = n_s$ then wecan assume that $w=sw'$ by using the braid relations and a relation of $B$. Using a relation of $X_s$ we obtain $n_s u_1 = u_1' n_s b_s n_s^{-1}$, where $u = u_1, \ldots, u_k$. Then $n_su_1 \cdots u_kw = u_1' n_s b_s n_s^{-1} u_2, \ldots, u_k w' = u_1' n_s b_s u' w' = u_1' n_s u'' w'b = u_0' w b$ by induction.
		Let $b, b'\in B, w\in \langle n_s \mid s\in S \rangle$ such that $r = bwb'$. Since this is a relation it fixes $B_+$. Thus $w=1$. Then $r$ is a relation of $B$. Thus every relation of $G$ can be written as a product of relations of $P_s, N_{s, t}$. Thus the claim follows. - cf. Trees Chapter $2$.
	\end{comment}
\end{proof}

\begin{example}
	Let $\Delta$ be an irreducible spherical Moufang building of rank at least $2$ and let $\Sigma$ be an apartment of $\Delta$. Identifying the set of roots $\Phi$ with the set of roots in $\Sigma$, we deduce that $G = \langle U_{\alpha} \mid \alpha \in \Phi \rangle$ is an RGD-system.
\end{example}

\begin{lemma}\label{coUplus}
	Let $\left( G, \left(U_{\alpha} \right)_{\alpha \in \Phi} \right)$ be an RGD-system of $2$-spherical type $(W, S)$ such that $\Delta(\mathcal{D})$ satisfies Condition $\lco$ (e.g. each root group contains at least four elements). Then we have $\langle U_{\gamma} \mid \gamma \in [\alpha_s, \alpha_t] \rangle = \langle U_{\alpha_s} \cup U_{\alpha_t} \rangle$ or equivalently $[U_{\alpha_s}, U_{\alpha_t}] = U_{(\alpha_s, \alpha_t)}$ for all $s\neq t \in S$.
\end{lemma}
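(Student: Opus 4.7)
The plan is to pass to the rank $2$ residue $R$ of type $\{s,t\}$ containing the fundamental chamber $c := B_+$ and analyse the action of the relevant root groups on this Moufang $m$-gon (where $m := m_{st}$). Set $U := \langle U_\gamma \mid \gamma \in [\alpha_s, \alpha_t]\rangle$ and $V := \langle U_{\alpha_s}, U_{\alpha_t}\rangle$; clearly $V \leq U$ and both subgroups fix $c$. The RGD axioms applied to the rank $2$ subsystem yield the unique product decomposition $U = U_{\gamma_1} U_{\gamma_2} \cdots U_{\gamma_m}$ (with $\gamma_1 = \alpha_s$, $\gamma_m = \alpha_t$ the natural enumeration of $[\alpha_s, \alpha_t]$), and in particular $U$ acts simply transitively on the set $c^{\mathrm{op}}_R$ of chambers of $R$ opposite $c$.

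My next aim is to show that $V$ already acts transitively on $c^{\mathrm{op}}_R$; combined with the simple transitivity of $U$, this forces $V = U$, giving the first formulation. Fix an apartment of $R$ through $c$, identified with the dihedral group $\langle s,t\rangle$ so that $c$ corresponds to $1$, and let $d_0$ denote the chamber corresponding to $p(s,t)$. Set $\Omega := V \cdot d_0 \subseteq c^{\mathrm{op}}_R$. The decisive technical observation is that $\Omega$ is closed under $\{s,t\}$-adjacency inside $c^{\mathrm{op}}_R$: for $d_1 = v d_0 \in \Omega$ and $d_2 \in P_r(d_1) \cap c^{\mathrm{op}}_R$ (with $r \in \{s,t\}$), the translation $v^{-1} d_2 \in P_r(d_0) \cap c^{\mathrm{op}}_R$ reduces the problem to showing $P_r(d_0) \cap c^{\mathrm{op}}_R \subseteq V \cdot d_0$. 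For this reduction, a direct dihedral computation based on whether $p(s,t) \cdot s \cdot p(s,t)^{-1}$ equals $s$ (when $m$ is even) or $t$ (when $m$ is odd) shows that the apartment panel $P_s(d_0)$ lies on the wall $\partial \alpha_s$ respectively $\partial \alpha_t$, and that $\mathrm{proj}_{P_s(d_0)} c$ is the unique non-opposite chamber of the panel; the associated root group ($U_{\alpha_s}$ or $U_{\alpha_t}$, both in $V$) then acts simply transitively on $P_s(d_0) \cap c^{\mathrm{op}}_R$ by the Moufang axioms, so $V$ reaches every chamber there from $d_0$. The argument for $P_t(d_0)$ is symmetric. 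With closure in hand, Condition $\lco$ (connectedness of $c^{\mathrm{op}}_R$) combined with $d_0 \in \Omega$ forces $\Omega = c^{\mathrm{op}}_R$.

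For the equivalence with $[U_{\alpha_s}, U_{\alpha_t}] = U_{(\alpha_s, \alpha_t)}$, the inclusion $\subseteq$ is (RGD1). Given $U = V$, the quotient $U/[U_{\alpha_s}, U_{\alpha_t}]$ is generated by commuting images of $U_{\alpha_s}$ and $U_{\alpha_t}$ and is therefore a homomorphic image of $U_{\alpha_s} \times U_{\alpha_t}$. Its composition with the canonical surjection onto $U/U_{(\alpha_s, \alpha_t)} \cong U_{\alpha_s} \times U_{\alpha_t}$ (the latter isomorphism coming from the unique product decomposition) is the identity of $U_{\alpha_s} \times U_{\alpha_t}$, forcing both surjections to be isomorphisms and hence $[U_{\alpha_s}, U_{\alpha_t}] = U_{(\alpha_s, \alpha_t)}$. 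The principal obstacle is the dihedral analysis identifying the wall containing $P_s(d_0)$ (in order to pin down a root group in $V$ acting simply transitively on the relevant opposite chambers); the remainder combines standard Moufang polygon theory with Condition $\lco$.
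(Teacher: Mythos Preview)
Your proof is correct and gives a self-contained argument where the paper simply cites \cite[Lemma~18 and Proposition~7]{Ab96}. The strategy you use---showing that $V=\langle U_{\alpha_s},U_{\alpha_t}\rangle$ already acts transitively on the chambers of the rank~$2$ residue opposite $c$ by combining the Moufang property with the connectedness furnished by Condition~$\lco$, and then deducing $V=U$ from the simple transitivity of $U$---is precisely the argument behind the cited reference, so the two approaches coincide in substance. One small point you gloss over: for the quotient $U/[U_{\alpha_s},U_{\alpha_t}]$ to make sense you need $[U_{\alpha_s},U_{\alpha_t}]$ to be normal in $V$; this follows from the elementary identity $a[a',b]a^{-1}=[aa',b][a,b]^{-1}$ for $a,a'\in U_{\alpha_s}$, $b\in U_{\alpha_t}$ (and its symmetric counterpart), and is worth stating explicitly.
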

\begin{proof}
	This is a consequence of \cite[Lemma $18$ and Proposition $7$]{Ab96}.
\end{proof}

\begin{remark}\label{Remark: Homomorphism between RGD-systems}
	Let $\left( G, \left( U_{\alpha} \right)_{\alpha \in \Phi} \right), \left( H, \left( V_{\alpha} \right)_{\alpha \in \Phi} \right)$ be two RGD-systems of the same irreducible spherical type $(W, S)$ and of rank at least $2$. Assume that $G = \langle U_{\alpha} \mid \alpha \in \Phi \rangle$, the root groups $U_{\alpha}$ are nilpotent and that both twin buildings associated with the RGD-systems satisfy Condition $\lco$. Then $G, H$ are perfect by the previous lemma. Assume that there exists a homomorphism $\phi: G \to H$ such that $\phi(U_{\alpha}) = V_{\alpha}$. As $\ker \phi \neq G$, we have $\ker \phi \leq Z(G)$ by \cite[Proposition $7.132$]{AB08}.
\end{remark}

\begin{lemma}\label{Uplus2amalgam}
	Let $\mathcal{D} = \left( G, \left(U_{\alpha} \right)_{\alpha \in \Phi} \right)$ be an RGD-system of $3$-spherical type $(W, S)$ such that $\Delta(\mathcal{D})_{\pm}$ satisfy the Conditions $\lco$ and $\lsco$ (e.g. every root group contains at least five elements). Then $B_+$ is the $2$-amalgam of the groups $H U_{\alpha_s}$.
\end{lemma}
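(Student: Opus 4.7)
The plan is to apply the standard amalgam criterion (``Tits' lemma'') to the action of $B_+$ on the chamber system $\mathcal{X}$ whose chambers are $\Omega := c_+^{\mathrm{op}} \subseteq \C_-$ and whose adjacencies are inherited from $\Delta_-$.

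First I would identify the stabilizer structure. By the Birkhoff decomposition $G = \bigsqcup_{w\in W} B_+ w B_-$, the group $B_+$ acts transitively on $\Omega$ with $\Stab_{B_+}(c_-) = B_+ \cap B_- = H$. The Moufang property inherent in the RGD-system implies that $U_{\alpha_s}$ fixes $c_+$ and acts simply transitively on $\P_s(c_-) \setminus \{s\cdot c_-\} = \P_s(c_-) \cap \Omega$; together with $H$ as chamber stabilizer this identifies $HU_{\alpha_s}$ with the full $B_+$-stabilizer of the $s$-residue of $c_-$ in $\mathcal{X}$. Passing to the rank-$2$ sub-twin-building supported by a $\{s,t\}$-residue through $c_\pm$, Lemma \ref{coUplus} gives $\langle HU_{\alpha_s}, HU_{\alpha_t}\rangle = H\langle U_\gamma \mid \gamma \in [\alpha_s, \alpha_t]\rangle$, and a parallel Birkhoff argument inside the sub-twin-building shows this is the $B_+$-stabilizer of the $\{s, t\}$-residue of $c_-$ in $\mathcal{X}$.

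The crux is to prove that $\mathcal{X}$ is connected and $2$-simply connected. A standard twin-building computation identifies, for every spherical residue $R \subseteq \C_-$, the subset $R \cap \Omega$ with the set of chambers of $R$ opposite a specific chamber $p_R \in R$ derived from the twin projection of $c_+$ onto $R$. Condition $\lco$ applied in $\Delta_-$ then immediately yields connectedness of $R \cap \Omega$ for $R$ of rank $2$, and Condition $\lsco$ yields simple connectedness for $R$ of rank $3$. A standard filtration argument for chamber systems (cf.~\cite[Chapter $9$]{DMVM11}) decomposes every closed gallery in $\mathcal{X}$ into pieces supported in rank-$\leq 3$ residues of $\Delta_-$ and contracts them using this local data, giving $2$-simple connectedness of $\mathcal{X}$; combined with transitivity of $B_+$, this also forces $B_+ = \langle HU_{\alpha_s} \mid s \in S\rangle$, so the subgroups in question do generate $B_+$.

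Finally, Tits' amalgam lemma applied to the chamber-transitive action of $B_+$ on the $2$-simply connected $\mathcal{X}$ with the stabilizer structure above yields that $B_+$ is canonically isomorphic to the $2$-amalgam of $(HU_{\alpha_s})_{s \in S}$, which is the claim. The principal obstacle is the second step: establishing $2$-simple connectedness of $\mathcal{X}$, which absorbs all of the $3$-sphericity hypothesis together with Conditions $\lco$ and $\lsco$, as well as the identification of $R \cap \Omega$ with an ``opposite-chamber'' set via the twin projection. Once these geometric facts are in hand, the stabilizer computation in the first step is essentially formal and the amalgam step in the last is a direct invocation of the criterion.
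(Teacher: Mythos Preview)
Your proposal is correct and is precisely the argument behind \cite[Corollary~1.2]{DM07}, which is all the paper invokes here. The paper treats this lemma as a black-box citation, while you have unpacked that citation; the underlying method (Tits' amalgam lemma applied to the $B_+$-action on $c_+^{\mathrm{op}}$, with simple connectedness coming from $\lco$ and $\lsco$ via a filtration by residues) is the same.
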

\begin{proof}
	This is a consequence of \cite[Corollary $1.2$]{DM07}.
\end{proof}

\begin{lemma}\label{Lemma: RGD-system residue isomorphism}
	Let $(G, (U_{\alpha})_{\alpha \in \Phi})$ be an RGD-system of type $(W, S)$ and let $X_{s, t} = \langle U_{\alpha} \mid \alpha \in \Phi^{\{s, t\}} \rangle, B_{\{s, t\}} = \langle H_s \cup H_t \cup U_{\alpha} \mid \alpha \in \Phi_+^{\{s, t\}} \rangle$. Then $(X_{s, t}, (U_{\alpha})_{\alpha \in \Phi^{\{s, t\}}})$ is an RGD-system and $X_{s, t} / B_{s, t} \to R_{\{s, t\}}(B_+), gB_{s, t} \mapsto gB_+$ is an isomorphism of buildings.
\end{lemma}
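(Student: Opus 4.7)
The plan is to first verify that $(X_{s,t},(U_\alpha)_{\alpha\in\Phi^{\{s,t\}}})$ satisfies the RGD-axioms by restriction of the corresponding axioms in $\mathcal{D}$, and then, using the Bruhat decomposition available both in $X_{s,t}$ and in $G$, to show that the assignment $gB_{s,t}\mapsto gB_+$ is a distance-preserving bijection between $X_{s,t}/B_{s,t}$ and $R_{\{s,t\}}(B_+)$.

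For the RGD-axioms I would run through (RGD$0$)--(RGD$4$) in turn. Axioms (RGD$0$) and (RGD$1$) are literal restrictions, using that $(\alpha,\beta)\subseteq\Phi^{\{s,t\}}$ for every prenilpotent pair $\{\alpha,\beta\}\subseteq\Phi^{\{s,t\}}$. For (RGD$2$), given $u\in U_{\alpha_r}\setminus\{1\}$ with $r\in\{s,t\}$, the element $m(u)=u'uu''$ already lies in $X_{s,t}$ because $u',u''\in U_{-\alpha_r}\subseteq X_{s,t}$, and since $\Phi^{\{s,t\}}$ is stable under $\langle s,t\rangle$, the identity $m(u)U_\beta m(u)^{-1}=U_{r\beta}$ holds inside $X_{s,t}$ for every $\beta\in\Phi^{\{s,t\}}$. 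For (RGD$3$), an inclusion $U_{-\alpha_r}\subseteq\langle U_\alpha\mid\alpha\in\Phi_+^{\{s,t\}}\rangle$ would give $U_{-\alpha_r}\subseteq U_+$, contradicting (RGD$3$) in $\mathcal{D}$. Axiom (RGD$4$) holds by construction.

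For the building part, the map $\phi\colon X_{s,t}/B_{s,t}\to R_{\{s,t\}}(B_+)$, $gB_{s,t}\mapsto gB_+$, is well-defined because every generator of $B_{s,t}$ lies in $B_+$. To check surjectivity, observe that each generator of $X_{s,t}$ already lies in $B_+\langle s,t\rangle B_+$, so $X_{s,t}\subseteq B_+\langle s,t\rangle B_+$; conversely, the standard unipotent factorisation $B_+n_wB_+=U_wn_wB_+$ together with $U_w\subseteq X_{s,t}$ for $w\in\langle s,t\rangle$ yields $B_+\langle s,t\rangle B_+\subseteq X_{s,t}B_+$, so that $R_{\{s,t\}}(B_+)=X_{s,t}B_+/B_+$. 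Injectivity and distance-preservation both reduce to the identification $X_{s,t}\cap B_+=B_{s,t}$, which I regard as the main obstacle: given $g\in X_{s,t}\cap B_+$, apply the Bruhat decomposition inside $X_{s,t}$ (afforded by the first part of the lemma together with \cite[Theorem $8.80$]{AB08}) to write $g\in B_{s,t}n_wB_{s,t}$ for some $w\in\langle s,t\rangle$; since $B_{s,t}\subseteq B_+$ this forces $n_w\in B_+$, and Bruhat uniqueness in $G$ then gives $w=1$, hence $g\in B_{s,t}$. The same uniqueness argument shows that for $g,h\in X_{s,t}$ and $w\in\langle s,t\rangle$ one has $g^{-1}h\in B_{s,t}n_wB_{s,t}$ if and only if $g^{-1}h\in B_+n_wB_+$, which yields the Weyl-distance compatibility and hence the desired building isomorphism.
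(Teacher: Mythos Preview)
Your proposal is correct and follows essentially the same approach as the paper. The paper is terser: it does not spell out the RGD-axiom verification at all, and for injectivity it simply asserts $B_+\cap X_{s,t}=B_{\{s,t\}}$ via the stabilizer characterization $B_+=\Stab_G(B_+)$, $B_{\{s,t\}}=\Stab_{X_{s,t}}(B_+)$ (which, unpacked, is exactly your Bruhat argument); it then checks only adjacency-preservation and invokes \cite[Lemma $5.61$]{AB08} rather than verifying full Weyl-distance compatibility as you do.
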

\begin{proof}
	As $X_{s, t}$ stabilizes $R_{\{s, t\}}(B_+)$, we deduce $\{ gB_+ \mid g \in X_{s, t} \} \subseteq R_{\{s, t\}}(B_+)$. Let $g \in G$ be such that $w := \delta_+(B_+, gB_+) \in \langle s, t \rangle$. By \cite[Proposition $8.59(2)$]{AB08}) there exists $h\in U_w$ such that $hwB_+ = gB_+$ and hence $gB_+ \in \{ hB_+ \mid h \in X_{s, t} \}$. Thus $R_{\{s, t\}}(B_+) = \{ gB_+ \mid g\in X_{s, t} \}$. Since $B_{\{s, t\}} \subseteq B_+$, the map is well-defined. Suppose $g, h\in X_{s, t}$ such that $gB_+ = hB_+$. Then $g^{-1}h \in B_+ \cap X_{s, t} = B_{\{s, t\}}$ as $B_+ = \Stab_G(B_+)$ and $B_{\{s, t\}} = \Stab_{X_{s, t}}(B_+)$ and thus the mapping is injective. Since $R_{\{s, t\}}(B_+) = \{ gB_+ \mid g\in X_{s, t} \}$ the mapping is also surjective. Furthermore, it preserves adjacency. The claim follows now from \cite[Lemma $5.61$]{AB08}.
\end{proof}

\subsection*{Blueprints}

This subsection is based on \cite[Ch. $7$]{Ro89}.

A \textit{parameter system} will mean a family of disjoint \textit{parameter sets} $(U_s')_{s\in S}$, each having a distinguished element $\infty_s \in U_s'$. We shall write $U_s := U_s' \backslash \{ \infty_s \}$.

Let $\Delta = (\C, \delta)$ be a building of type $(W, S)$ and let $(U_s')_{s\in S}$ be a parameter system. A \textit{labelling of $\Delta$ of type $(U_s')_{s\in S}$}, \textit{based at $c\in \C$}, is a family $\left( \phi_{P_s}: U_s' \to P_s \right)_{P_s \in \P_{\Delta}}$ of bijections such that $\phi_{P_s}(\infty_s) = \proj_{P_s} c$. We call $\phi_{P_s}^{-1}(x) \in U_s'$ the \textit{$s$-label} of $x$.

\begin{example}
	Let $\Delta = (\C, \delta)$ be a building of type $I_2(m)$, $m\geq 2$, with a labelling of type $(U_s', U_t')$, where $S = \{s, t\}$, based at some chamber $c\in \C$. Given any chamber $x\in \C$ at distance $d$ from $c$ one has a minimal gallery $(c = c_0, \ldots, c_d = x)$. Let $u_i$ be the label attached to $c_i$ in the unique panel containing $c_{i-1}$ and $c_i$. The gallery thus determines the sequence $(u_1, \ldots, u_d)$ where the $u_i$ lie alternately in $U_s$ and $U_t$, and any such sequence obviously determines a unique gallery starting at $c$, and hence a unique chamber at the end of this gallery. If two sequences determine the same chamber they are called \textit{equivalent}.
	\begin{comment}
		\begin{example}
			Let $U$ be a generalized $m$-gon over $\{s, t\}$, with a labelling of type $(U_s, U_t)$, based at some chamber $c\in U$. Given any chamber $x\in U$ at distance $d$ from $c$ one has a gallery $(c = c_0, \ldots, c_d = x)$. If $d < m$ this is unique, and if $d = m$ there are two such galleries, of types $p(s, t)$ and $p(t, s)$. Now let $u_t$ be the label attached to $c_t$ in the rank $1$ residue containing $c_{t-1}$ and $c_t$. The gallery thus determines the sequence $(u_1, \ldots, u_d)$ where the $u_t$ lie alternately in $U_s'$ and $U_t'$, and any such sequence obviously determines a unique gallery starting at $c$, and hence a unique chamber at the end of this gallery. If $d=m$ exactly two sequences determine the same chamber; we call these sequences \textit{equivalent}. These equivalences, one for each chamber opposite $c$, give complete data for reconstructing $U$.
		\end{example}
	\end{comment}
\end{example}

A \textit{blueprint of type $(W, S)$} is a tuple $\left( \left( U_s' \right)_{s\in S}, \left(\Delta_{st} \right)_{s\neq t\in S} \right)$ consisting of a parameter system $(U_s')_{s\in S}$ and buildings $\Delta_{st} = (\C_{st}, \delta_{st})$ of type $I_2(m_{st})$ for each $s\neq t \in S$, with a given labelling of type $(U_s', U_t')$, based at some chamber $\infty_{st} \in \C_{st}$.
\begin{comment}
	(In particular, in the rank $2$ case a blueprint is simply a labelling of a rank $2$ building).
\end{comment}

A building $\Delta$ of type $(W, S)$ will be said to \textit{conform} to the blueprint $\left( \left( U_s' \right)_{s\in S}, \left( \Delta_{st} \right)_{s\neq t \in S} \right)$ if there exists a labelling of $\Delta$ of type $\left( U_s' \right)_{s\in S}$, based at some chamber $c\in \C$, such that for each $\{s, t\}$-residue $R$ of $\Delta$ there is an isomorphism $\phi_R: \Delta_{st} \to R$ with the property that $x$ and $\phi_R(x)$ have the same $s$- and $t$-labels for each chamber $x$ of $\Delta_{st}$. We call a blueprint $\mathcal{B}$ \textit{realisable} if there exists a building which conforms to it.

\subsection*{A realisable criterion}

We want to construct a building which conforms to a given blueprint. Let $\mathcal{B} = \left( \left( U_s' \right)_{s\in S}, \left( \Delta_{st} \right)_{s\neq t \in S} \right)$ be a blueprint of type $(W, S)$. As a first step we construct a chamber system $\Cbf(\mathcal{B})$ as follows: The chambers are sequences $\bar{u} := (u_1, \ldots, u_k)$, where $u_i \in U_{s_i}$ and $s_1 \cdots s_k$ is reduced. We call $(s_1, \ldots, s_k)$ the \textit{type} of $\bar{u}$. We define $s$-adjacency via
\begin{align*}
	(u_1, \ldots, u_k) \sim_s (u_1, \ldots, u_k, u_{k+1}) \sim_s (u_1, \ldots, u_k, u_{k+1}'),
\end{align*}
where $u_{k+1}, u_{k+1}' \in U_s$; this is evidently an equivalence relation, so $\Cbf(\mathcal{B})$ is a chamber system. For $\bar{u} = (u_1, \ldots, u_k)$ having type $(s_1, \ldots, s_k)$ and $\bar{v} = (v_1, \ldots, v_n)$ having type $(t_1, \ldots, t_n)$ we define the sequence $\bar{u} \bar{v} := (u_1, \ldots, u_k, v_1, \ldots, v_n)$ if $s_1 \cdots s_k t_1 \cdots t_n$ is reduced. We now define an \textit{elementary equivalence} to be an alteration from a sequence $\bar{u}_1 \bar{u} \bar{u}_2$ of type $(f_1, p(s, t), f_2)$ to $\bar{u}_1 \bar{u}' \bar{u}_2$ of type $(f_1, p(t, s), f_2)$ where $\bar{u}$ and $\bar{u}'$ are equivalent in $\Delta_{st}$. Two sequences $\bar{u}$ and $\bar{v}$ are called \textit{equivalent}, written $\bar{u} \simeq \bar{v}$, if one can be transformed to the other by a finite sequence of elementary equivalences. We now consider $\Cbf(\mathcal{B})$ modulo the equivalence relation. Notice that $[\bar{u}]$ determines a unique element $w \in W$ where $(s_1, \ldots, s_k)$ is the type of $\bar{u}$ and $w = s_1 \cdots s_k$. We define $x \sim_s y$ if $x = [\bar{u}], y = [\bar{v}]$ with $\bar{u} \sim_s \bar{v}$.

\begin{theorem}\label{Ro89Theorem7.1+7.2}
	Assume that for any two sequences $\bar{u},\bar{v}$ of the same reduced type, $\bar{u} \simeq \bar{v}$ implies $\bar{u} = \bar{v}$. Then $\Cbf_{\mathcal{B}} := ( [\bar{u}], (\sim_s)_{s\in S} )$ is a chamber system. The chambers are equivalence classes of sequences $\bar{u} := (u_1, \ldots, u_k)$, denoted by $[\bar{u}]$ or $[u_1, \ldots, u_k]$. Moreover, $\Cbf_{\mathcal{B}}$ is a building which conforms to $\mathcal{B}$.
\end{theorem}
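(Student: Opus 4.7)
The plan is to first establish that $\Cbf_{\mathcal{B}}$ is a well-defined chamber system, then endow it with a $W$-valued distance so that it becomes a building of type $(W,S)$, and finally exhibit the labelling that witnesses conformity to $\mathcal{B}$. Throughout, the key engine will be the hypothesis that $\bar u \simeq \bar v$ of the same reduced type forces $\bar u = \bar v$: without such a uniqueness statement, distinct chambers would get identified and the construction would collapse.

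I would first check that $\sim_s$ descends to the quotient $\Cbf_{\mathcal{B}}$. An elementary equivalence replaces a block of type $p(s,t)$ by one of type $p(t,s)$, which manifestly preserves $r$-adjacency for $r\notin\{s,t\}$ outside the block, while inside the block the $\{s,t\}$-structure is governed by $\Delta_{st}$. Next, the map $\omega\colon \Cbf_{\mathcal{B}}\to W$ sending $[\bar u]$ to the product of its type is well-defined because $p(s,t)$ and $p(t,s)$ both represent $r_{\{s,t\}}\in W$. Fixing the base chamber $c_0 := [()]$, the formula $\delta(c_0,[\bar u]):=\omega([\bar u])$ gives the distance from $c_0$ for free; for arbitrary $x,y$, I would define $\delta(x,y)\in W$ as the product of the type of any minimal gallery joining them.

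The hard part will be to show that $\delta$ is well-defined on arbitrary pairs and that the building axioms hold. The key ingredient is the classical fact that any two reduced expressions of the same $w\in W$ are related by a sequence of braid moves $p(s,t)\leftrightarrow p(t,s)$, which in our setting are precisely the elementary equivalences. Combined with the hypothesis, this forces the chamber at the endpoint of a gallery of reduced type representing $w$ (starting at $x$) to depend only on $x$ and $w$, so $\delta$ is well-defined. Axiom (WD1) then reduces to the observation that the only reduced-type representative of $c_0$ is the empty sequence; axiom (WD3) follows from the very definition of $\sim_s$ as \textit{append or modify a last letter in $U_s$}; and axiom (WD2) follows by invoking the exchange condition in $(W,S)$ together with the same braid-move argument applied to potentially non-minimal galleries.

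Finally, conformity is essentially built in. On an $s$-panel $P_s$ through $[u_1,\ldots,u_k]$ I would define $\phi_{P_s}(u):=[u_1,\ldots,u_k,u]$ when appending $s$ to the type is reduced (and modify the last coordinate otherwise), with $\phi_{P_s}(\infty_s):=\proj_{P_s}c_0$. The restriction of $\simeq$ to sequences whose type lies in $\{s,t\}^{\ast}$ and which start with a fixed prefix reproduces exactly the defining equivalence in $\Delta_{st}$, so every $\{s,t\}$-residue of $\Cbf_{\mathcal{B}}$ is canonically and label-preservingly isomorphic to $\Delta_{st}$, which is precisely what the conformity of $\Cbf_{\mathcal{B}}$ to $\mathcal{B}$ demands.
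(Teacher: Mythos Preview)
Your approach differs from the paper's, which simply cites Ronan \cite[Theorem (7.1)]{Ro89}. Ronan's argument does \emph{not} verify the (WD)-axioms directly. Instead, he shows that the chamber system $\Cbf_{\mathcal{B}}$ satisfies a local condition $(P_c)$ at the single base chamber $c = [()]$ (every $\{s,t\}$-residue is isomorphic to $\Delta_{st}$, and reduced galleries from $c$ are unique for a given type), and then invokes a recognition theorem \cite[Theorem (4.2)]{Ro89} which says that a connected chamber system of type $M$ satisfying $(P_c)$ for one chamber is a building. The point is that the passage from ``uniqueness at $c$'' to ``building'' is a nontrivial theorem, not a routine check.

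Your direct approach has a genuine gap at exactly this point. The hypothesis you are given controls only sequences, i.e.\ galleries \emph{starting at $c_0$}. When you write ``this forces the chamber at the endpoint of a gallery of reduced type representing $w$ (starting at $x$) to depend only on $x$ and $w$'', you are asserting the analogous uniqueness property at an \emph{arbitrary} base chamber $x$, but you give no mechanism for transporting the hypothesis from $c_0$ to $x$. Concretely: a gallery from $x = [\bar u_0]$ to $y$ of reduced type $f$ need not correspond to appending a type-$f$ tail to $\bar u_0$, because the concatenated type need not be reduced; reducing it requires knowing that the rank-$2$ residues already behave like the $\Delta_{st}$, which is part of what you are trying to prove. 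This is precisely the content absorbed by Ronan's Theorem (4.2), and without it your definition of $\delta(x,y)$ for general $x$ is not shown to be independent of the chosen minimal gallery. A secondary issue: your argument that $\sim_s$ descends to an equivalence relation on the quotient is too quick---transitivity already requires the hypothesis (this is Step~2 of Ronan's proof), whereas your sketch does not invoke it.
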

\begin{proof}
	This follows from the proof of \cite[Theorem $(7.1)$]{Ro89}.
\end{proof}

\begin{corollary}\label{Corollary: Ro89Theorem7.1+7.2}
	A blueprint is realisable if and only if for any two sequences $\bar{u},\bar{v}$ of the same reduced type, $\bar{u} \simeq \bar{v}$ implies $\bar{u} = \bar{v}$. In particular, a blueprint is realisable if its restriction to each spherical rank $3$ subdiagram is realisable.
\end{corollary}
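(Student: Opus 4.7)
For the biconditional, the direction ``$\Leftarrow$'' is immediate from Theorem~\ref{Ro89Theorem7.1+7.2}, whose conclusion under exactly this hypothesis is that $\Cbf_{\mathcal{B}}$ is a building conforming to $\mathcal{B}$. For ``$\Rightarrow$'', suppose $\mathcal{B}$ is realised by a building $\Delta = (\C, \delta)$ with a labelling based at some $c \in \C$. The plan is to introduce a map $\Psi$ from sequences of reduced type to $\C$ by label-following: send $\bar{u} = (u_1, \ldots, u_k)$ of reduced type $(s_1, \ldots, s_k)$ to $c_k$, where $c_0 := c$ and $c_i := \phi_{\P_{s_i}(c_{i-1})}(u_i)$ inductively. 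Since each $u_i \neq \infty_{s_i}$ and the type is reduced, the resulting gallery $(c_0, \ldots, c_k)$ is minimal, so on sequences of the same reduced type $\Psi$ is injective by uniqueness of a minimal gallery of given reduced type: the endpoint recovers the gallery, hence the labels.

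It remains to show that $\Psi$ is constant on $\simeq$-classes. For this, I would check a single elementary equivalence $\bar{u}_1 \bar{u} \bar{u}_2 \mapsto \bar{u}_1 \bar{u}' \bar{u}_2$ with $\bar{u}$ of type $p(s,t)$, $\bar{u}'$ of type $p(t,s)$, and $\bar{u} \simeq \bar{u}'$ in $\Delta_{st}$. Write $x$ for the chamber reached by $\bar{u}_1$, $y$ for the endpoint of $\bar{u}$, and $R := R_{\{s,t\}}(x)$. Expanding $\delta(c, z) = \delta(c, \proj_R c) \cdot \delta(\proj_R c, z)$ with lengths adding for $z \in \{x, y\}$, using reducedness of the underlying word $f_1 p(s,t)$ and that $y$ is opposite $x$ in the spherical residue $R$, a short length argument forces $x = \proj_R c$. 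The conforming isomorphism $\phi_R \colon \Delta_{st} \to R$ then carries $\infty_{st}$ to $x$ and preserves labels, so label-following $\bar{u}$ (resp.\ $\bar{u}'$) from $x$ in $R$ corresponds, via $\phi_R$, to label-following it from $\infty_{st}$ in $\Delta_{st}$; equivalence of $\bar{u}, \bar{u}'$ in $\Delta_{st}$ then yields the same endpoint in $R$. Combined with the injectivity of $\Psi$ on a fixed reduced type, this gives $\bar{u} \simeq \bar{v}$ of the same reduced type $\Rightarrow \bar{u} = \bar{v}$.

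For the ``in particular'' assertion, assume each restriction $\mathcal{B}|_J$ to a spherical rank-$3$ subdiagram is realisable; the biconditional applied to each such $\mathcal{B}|_J$ gives the equivalence condition on each rank-$3$ restriction. The plan to promote this to $\mathcal{B}$ is to invoke Tits' solution to the word problem for Coxeter groups: two reduced expressions for a common $w \in W$ are linked by braid moves $p(s,t) \leftrightarrow p(t,s)$, and the cycles in the graph of reduced expressions (with braid moves as edges) are generated by (i) commutations of braid moves in disjoint positions, which act trivially on the parameter data, and (ii) cycles supported on spherical rank-$3$ subdiagrams. Any chain of elementary equivalences from $\bar{u}$ to $\bar{v}$ of the same reduced type then decomposes into operations localised in spherical rank-$3$ subdiagrams, each preserving labels by hypothesis, so $\bar{u} = \bar{v}$. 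I expect this ``in particular'' step to be the main obstacle: it hinges on the precise cycle structure of the braid graph of reduced expressions in a Coxeter group, which is the principal non-trivial input to be carefully imported.
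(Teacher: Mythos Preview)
Your proof is correct and follows essentially the same route as the paper, which simply defers both assertions to Ronan's book: the biconditional is \cite[Theorem~(7.2)]{Ro89}, and the rank-$3$ reduction is \cite[Step~1 of Theorem~(7.1)]{Ro89}. What you have written is a faithful unpacking of those arguments: your label-following map $\Psi$ is exactly how one shows the ``only if'' direction, and your remark that the ``in particular'' step hinges on the cycle structure of the braid graph of reduced expressions (inessential self-homotopies plus cycles supported in spherical rank-$3$ residues) is precisely Ronan's Theorem~(2.17), the non-trivial combinatorial input you correctly flag.
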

\begin{proof}
	The first assertion is \cite[Theorem $(7.2)$]{Ro89}; the second assertion follows from the first and \cite[Step $1$ of Theorem $(7.1)$]{Ro89}.
\end{proof}

\subsection*{Blueprints and Moufang buildings}

This subsection is based on \cite{Ro89}.

Let $\Delta = (\C, \delta)$ be a spherical Moufang building of type $(W, S)$ and let $c\in \C$. For each $s\in S$ we fix $1 \neq e_s \in U_{\alpha_s}$ and put $n_s := m(e_s)$. Then any chamber of $\Delta$ can be written uniquely in the form $u_1 n_{s_1} \cdots u_k n_{s_k}B_+$ with $u_i \in U_{\alpha_{s_i}}$ and $s_1 \cdots s_k$ is reduced, if we fix the type $(s_1, \ldots, s_k)$ of $u_1n_{s_1} \cdots u_kn_{s_k}B_+$. This yields a \textit{natural labelling} of the building $\Delta$. More precisely let $P$ be any $s$-panel of $\Delta$, and let $\proj_P c = d$ and $w = \delta(c, d)$. As cosets of $B_+$ the chambers of $P$ may be written $u_1n_{s_1} \cdots u_kn_{s_k}B_+$ (this is $d$), and $u_1n_{s_1} \cdots u_kn_{s_k} vn_sB_+$ where $u_i \in U_{\alpha_{s_i}}$ and $v\in U_{\alpha_s}$. We assign them the $s$-labels $\infty_s$ and $v$, using $U_{\alpha_s}' = U_{\alpha_s} \cup \{ \infty_s \}$. If we let $R_{st}$ be the $\{s, t\}$-residue containing $c$, then $R_{st}$ acquires a labelling and we have a blueprint given by the $\left( e_s \right)_{s\in S}$, namely $\left( (U_{\alpha_s}')_{s\in S}, (R_{st})_{s \neq t \in S} \right)$.

\begin{proposition}\label{Ro89Proposition7.5}
	Let $\Delta$ be a spherical Moufang building. Then $\Delta$ conforms to the blueprint given by the restriction to $E_2(c)$, i.e. $\left( (U_{\alpha_s}')_{s\in S}, (R_{st})_{s \neq t \in S} \right)$, and the natural labelling of $\Delta$ as above.
\end{proposition}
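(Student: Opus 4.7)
The plan is to verify the conformance condition directly by exhibiting, for every $\{s,t\}$-residue $R$ of $\Delta$, an explicit label-preserving isomorphism $\phi_R \colon R_{st} \to R$. The natural candidate comes from translating by an appropriate group element. Let $d := \proj_R c$, let $w := \delta(c,d)$ and fix a reduced expression $s_1 \cdots s_l$ for $w$. Writing $d$ in canonical form as $gB_+$ with $g = u_1 n_{s_1} \cdots u_l n_{s_l}$, I define $\phi_R(yB_+) := gyB_+$ for $yB_+ \in R_{st}$. Since $G$ acts on $\Delta$ by isomorphisms of type-preserving chamber systems and $g B_+ = d \in R$, we have $g \cdot R_{st} = R$, so $\phi_R$ is a well-defined isomorphism of rank-$2$ buildings.

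The heart of the argument is the preservation of labels. For $y \in R_{st}$, write $y$ in canonical form $y = v_1 n_{t_1} \cdots v_k n_{t_k}B_+$ with $t_i \in \{s,t\}$. Then
\[
\phi_R(y) \;=\; u_1 n_{s_1} \cdots u_l n_{s_l}\, v_1 n_{t_1} \cdots v_k n_{t_k}\, B_+.
\]
The key point is that the concatenated word $s_1 \cdots s_l\, t_1 \cdots t_k$ is reduced: by the gate property of the residue $R$, $\ell(\delta(c, \phi_R(y))) = \ell(\delta(c,d)) + \ell(\delta(d, \phi_R(y))) = l + \ell(\delta(B_+, y))$. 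Hence the displayed expression is a canonical form of $\phi_R(y)$ in $\Delta$, with uniquely determined coefficients $u_1,\dots,u_l,v_1,\dots,v_k$ in the respective root groups (by the uniqueness statement for canonical forms recalled in the Moufang subsection).

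Now fix an $s$-panel $P \subseteq R_{st}$ and a chamber $y \in P$. If $y = \proj_P c$, then its $s$-label in $P$ is $\infty_s$; applying gate property of residues twice gives $\proj_{gP} c = \proj_{gP} d = g \cdot \proj_P c = \phi_R(y)$, so $\phi_R(y)$ is assigned the label $\infty_s$ in $gP$ as well. If instead $y$ has $s$-label $v \in U_{\alpha_s}$, then $y = d' \cdot v n_s B_+$ where $d' = \proj_P c$ is in canonical form $v_1 n_{t_1} \cdots v_{k-1} n_{t_{k-1}}$, and $t_k = s$, $v_k = v$. Consequently $\phi_R(y) = (gd') \cdot v n_s B_+$, and by the previous paragraph $gd'$ is the canonical form of $\proj_{gP}c$ in $\Delta$; so the $s$-label of $\phi_R(y)$ in $gP$ is again $v$. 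The same argument applies to $t$-labels, and every $\{s,t\}$-residue of $\Delta$ arises this way, so $\Delta$ conforms to the blueprint.

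The main subtle point, and the only real obstacle, is making sure that when one prefixes the canonical form of $y$ with the canonical form of $g$, one obtains the canonical form of $\phi_R(y)$ rather than merely \emph{some} expression of the correct type. This is exactly where the additivity of length along projections onto residues is crucial: it guarantees that the concatenated type is reduced, and then the uniqueness-of-coefficients statement for canonical forms (once the type is fixed) finishes the identification and the comparison of labels.
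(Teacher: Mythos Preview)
Your proof is correct and is essentially the argument given in Ronan's \cite[(7.5) Proposition]{Ro89}, which is precisely what the paper cites (the paper gives no independent proof). The map $\phi_R$ given by left translation by the canonical representative of $\proj_R c$, together with the additivity of length through the gate to ensure the concatenated type is reduced, is exactly Ronan's construction; your treatment of the $\infty_s$ case via the composition-of-projections identity $\proj_{gP} c = \proj_{gP} d$ and the equivariance $\proj_{gP} d = g\cdot \proj_P c$ is the standard justification.
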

\begin{proof}
	This is \cite[$(7.5)$ Proposition]{Ro89}.
	%If $A$ is any $\{s, t\}$-residue, let $w := \delta(c, \proj_A c)$. As a coset of $B$, $\proj_A c$ is $un(w)B$ for some $u\in U_w$, and left multiplication by $un(w)$ gives an isomorphism from the $\{s, t\}$-residue containing $c$ to $A$, preserving $s$ and $t$-labels.
\end{proof}

\begin{corollary}\label{Homotopyinrank2}
	Let $\Delta$ be a spherical Moufang building and $\mathcal{B}$ be the blueprint given by $E_2(c)$. Then $(u_1, \ldots, u_k)$ and $(v_1, \ldots, v_k)$ are equivalent if and only if $u_1n_1 \cdots u_kn_k B_+ = v_1n_1' \cdots v_kn_k' B_+$. In particular, the map $\phi: \Cbf_{\mathcal{B}} \to \Delta, [(u_1, \ldots, u_k)] \mapsto u_1n_1 \cdots u_kn_k B_+$ is an isomorphism of buildings.
\end{corollary}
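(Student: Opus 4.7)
The plan is to identify the map $\phi$ with the canonical map from $\Cbf(\mathcal{B})$ to $\Delta$ induced by the natural labelling, and then to combine Proposition~\ref{Ro89Proposition7.5} with Theorem~\ref{Ro89Theorem7.1+7.2} and Corollary~\ref{Corollary: Ro89Theorem7.1+7.2}. The key observation is the following: for a reduced expression $s_1 \cdots s_k$ and $u_i \in U_{\alpha_{s_i}}$, set $c_i := u_1 n_{s_1} \cdots u_i n_{s_i} B_+$ and $c_0 := B_+$; since $s_1 \cdots s_i$ is reduced, $c_{i-1} = \proj_{\P_{s_i}(c_i)} B_+$, and by the definition of the natural labelling the $s_i$-label of $c_i$ is precisely $u_i$. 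Thus $\phi([(u_1, \ldots, u_k)])$ is exactly the chamber reached from $B_+$ along the gallery whose successive labels are $(u_1, \ldots, u_k)$.

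For the ``$\Rightarrow$'' direction, it suffices to handle a single elementary equivalence $\bar u_1 \bar w \bar u_2 \simeq \bar u_1 \bar w' \bar u_2$, where $\bar w$ of type $p(s,t)$ and $\bar w'$ of type $p(t,s)$ represent the same chamber of $\Delta_{st}$. Let $R$ be the $\{s,t\}$-residue of $\Delta$ containing $\phi(\bar u_1 \bar w)$. Since $\bar u_1 \bar w$ has reduced type, the first paragraph yields $\phi(\bar u_1) = \proj_R B_+$, so the restriction of the natural labelling of $\Delta$ to $R$ coincides with the labelling used to identify $R$ with $\Delta_{st}$ in Proposition~\ref{Ro89Proposition7.5}. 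Consequently, $\bar w$ and $\bar w'$ represent the same chamber of $R$, yielding $\phi(\bar u_1 \bar w) = \phi(\bar u_1 \bar w')$; appending $\bar u_2$ then gives $\phi(\bar u) = \phi(\bar v)$.

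For the ``$\Leftarrow$'' direction and the final claim, Corollary~\ref{Corollary: Ro89Theorem7.1+7.2} applied to the realisability of $\mathcal{B}$ witnessed by $\Delta$ implies that distinct sequences of the same reduced type are inequivalent, so by Theorem~\ref{Ro89Theorem7.1+7.2}, $\Cbf_{\mathcal{B}}$ is a building. Fix a reduced expression $s_1 \cdots s_k = w$: chambers of $\Cbf_{\mathcal{B}}$ at distance $w$ from $[\emptyset]$ having a representative of type $(s_1, \ldots, s_k)$ are parametrised by $\prod_i U_{\alpha_{s_i}}$, and chambers of $\Delta$ at distance $w$ from $B_+$ are parametrised by the same set via the Moufang unique-decomposition property recalled in the preliminaries. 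The well-defined $\phi$ matches these parametrisations, hence is bijective at each Weyl-distance, and thus globally; since it preserves adjacency by construction, \cite[Lemma~$5.61$]{AB08} promotes it to an isomorphism of buildings. Equality of images in $\Delta$ then forces equivalence in $\Cbf_{\mathcal{B}}$. The main obstacle is the compatibility of the natural labelling with elementary equivalences occurring in rank $2$ residues not based at $c$, which is precisely what the projection identity in the first paragraph provides.
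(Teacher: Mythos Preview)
Your proposal is correct and follows essentially the same approach as the paper's proof: well-definedness of $\phi$ via the conforming property (Proposition~\ref{Ro89Proposition7.5}), injectivity via the uniqueness of the decomposition $u_1 n_{s_1} \cdots u_k n_{s_k}$ for a fixed reduced type, and the upgrade to a building isomorphism via \cite[Lemma~5.61]{AB08}. The paper is terser in the ``$\Rightarrow$'' direction (it simply asserts that elementary equivalence preserves the coset), whereas you spell out the projection identity $c_{i-1} = \proj_{\P_{s_i}(c_i)} B_+$ and use it to transport the equivalence in $\Delta_{st}$ to the arbitrary rank~$2$ residue $R$; for injectivity the paper picks a representative $(w_1,\ldots,w_k)$ of $[v_1,\ldots,v_k]$ of the same type as $(u_1,\ldots,u_k)$ and invokes uniqueness directly, which is exactly your ``bijective at each Weyl-distance'' argument rephrased.
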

\begin{proof}
	If $(u_1, \ldots, u_k)$ and $(v_1, \ldots, v_k)$ are elementary equivalent, then $u_1n_1 \cdots u_kn_k B_+ = v_1n_1' \cdots v_kn_k' B_+$. Thus $\phi$ is well-defined. Clearly, $\phi$ is surjective. Let $[(u_1, \ldots, u_k)], [(v_1, \ldots, v_k)] \in \Cbf_{\mathcal{B}}$ be such that $u_1 n_1 \cdots u_k n_k B_+ = \phi([u_1, \ldots, u_k]) = \phi([v_1, \ldots, v_k]) = v_1 n_1' \cdots v_k n_k' B_+$. Let $(u_1, \ldots, u_k)$ (resp. $(v_1, \ldots, v_k)$) be of type $(s_1, \ldots, s_k)$ (resp. $(t_1, \ldots, t_k)$). Then $s_1 \cdots s_k = t_1 \cdots t_k$. Thus there exists a sequence $(w_1, \ldots, w_k) \in [v_1, \ldots, v_k]$ of type $(s_1, \ldots, s_k)$ and we have $u_1 n_1 \cdots u_k n_k B_+ = w_1 n_1 \cdots w_k n_k B_+$. The uniqueness of the decomposition in \cite[Lemma $(7.4)$]{Ro89} implies $u_i = w_i$ and hence $[u_1, \ldots, u_k] = [w_1, \ldots, w_k]$. Thus $\phi$ is a bijection. Clearly, $\phi$ preserves $s$-adjacency. Now the claim follows from \cite[Lemma $5.61$]{AB08}.
\end{proof}

We now extend the concept of a natural labelling to (arbitrary) buildings of type $A_1 \times A_1$, by defining a labelling of type $(U_1', U_2')$ to be \textit{natural} if $(u_1, u_2)$ is equivalent to $(u_2, u_1)$ for all $u_1 \in U_1, u_2 \in U_2$. If $\Delta$ is a spherical Moufang building with a natural labelling given by $1 \neq e_s \in U_{\alpha_s}$ then any $A_1 \times A_1$ residue acquires a natural labelling in this sense (because the appropriate root groups commute).

\begin{lemma}\label{reduciblerealisable}
	Let $(W, S)$ be a reducible $2$-spherical Coxeter system of rank $3$. Then a blueprint of type $(W, S)$ is realisable if the labelling of the restriction to any $A_1 \times A_1$ residue is natural.
\end{lemma}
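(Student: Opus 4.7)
The plan is to apply the realisability criterion from Corollary \ref{Corollary: Ro89Theorem7.1+7.2}: it suffices to show that any two sequences $\bar{u}, \bar{v}$ of the same reduced type $f$ with $\bar{u} \simeq \bar{v}$ must already be equal. I will exploit reducibility by fixing a non-trivial partition $S = S_1 \sqcup S_2$ into commuting parts. Since the total rank is $3$, one of the $S_i$ has cardinality $1$ and the other at most $2$, and in either of the two possible shapes ($A_1 \times A_1 \times A_1$ or $I_2(m) \times A_1$) the restriction of the blueprint to each $S_i$ is of rank at most $2$.

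To each sequence $\bar{u}$ of type $f$ I will attach the pair of subsequences $\bar{u}^{(1)}, \bar{u}^{(2)}$ obtained by retaining only those entries whose type lies in $S_1$ and $S_2$ respectively; their types $f^{(i)}$ are automatically reduced expressions in $(\langle S_i \rangle, S_i)$. The main step is to track how an elementary equivalence acts on this pair. When the equivalence takes place in a residue $\Delta_{st}$ with $s, t \in S_i$, the altered $p(s, t)$-block lies entirely inside $\bar{u}^{(i)}$, so the move descends to an elementary equivalence in the sub-blueprint on $S_i$ and leaves $\bar{u}^{(3-i)}$ unchanged. When instead $s \in S_1, t \in S_2$ (so $m_{st} = 2$, $p(s, t) = (s, t)$ and $p(t, s) = (t, s)$), the naturality hypothesis on the labelling of $\Delta_{st}$ says precisely that the equivalence is the literal swap $(u, v) \leftrightarrow (v, u)$; this only permutes the interleaving of the two subsequences inside the full sequence and leaves each of $\bar{u}^{(1)}, \bar{u}^{(2)}$ unaltered. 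I expect this second case to be the main mechanical observation, and it is the only place where the naturality hypothesis is used.

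Iterating, the chain $\bar{u} \simeq \bar{v}$ produces chains $\bar{u}^{(i)} \simeq \bar{v}^{(i)}$ in the rank-$\leq 2$ sub-blueprints, and the equality of types $f^{(i)}$ is inherited. Blueprints of rank at most $2$ are trivially realisable (a single panel in rank $1$, and the given $\Delta_{st}$ in rank $2$), so Corollary \ref{Corollary: Ro89Theorem7.1+7.2} applied in the sub-blueprints yields $\bar{u}^{(i)} = \bar{v}^{(i)}$ for $i = 1, 2$. Since $\bar{u}$ is uniquely reconstructed from the pair $(\bar{u}^{(1)}, \bar{u}^{(2)})$ together with its type $f$, I conclude $\bar{u} = \bar{v}$, and a second application of Corollary \ref{Corollary: Ro89Theorem7.1+7.2} gives the realisability of the original blueprint.
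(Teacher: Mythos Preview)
Your proof is correct and takes a genuinely different route from the paper's. The paper argues by direct case analysis: in the $A_1 \times A_1 \times A_1$ case every elementary equivalence is a literal transposition of entries, and in the $I_2(m) \times A_1$ case (say $m_{sr}=m_{tr}=2$, $m_{st}=m\ge 3$) it observes that the $r$-letter occurs at most once in any reduced type, that the naturality hypothesis makes the $\{r,s\}$- and $\{r,t\}$-moves literal swaps, and that a $\Delta_{st}$-move is only possible when the lone $r$-entry sits at an end of the sequence; since a $\Delta_{st}$-move changes the type, returning to the original type forces undoing it, and one recovers the original sequence.

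Your argument replaces this position-tracking by the structural observation that the direct product decomposition $W=\langle S_1\rangle\times\langle S_2\rangle$ lets one project any sequence to its $S_1$- and $S_2$-subsequences, that elementary equivalences either descend to one factor or (across the partition, by naturality) leave both subsequences unchanged, and that realisability in rank $\le 2$ then forces $\bar{u}^{(i)}=\bar{v}^{(i)}$. This is cleaner, avoids the case split, and in fact works verbatim for any reducible $2$-spherical $(W,S)$ of arbitrary rank, not just rank~$3$. The paper's argument, by contrast, is tailored to rank~$3$ and is more hands-on but less transparent. Both invoke Corollary~\ref{Corollary: Ro89Theorem7.1+7.2} at the same points and use the naturality hypothesis in exactly the same way.
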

\begin{proof}
	Let $\mathcal{B} = \left( (U_s')_{s\in S}, (\Delta_{st})_{s\neq t \in S} \right)$ be a blueprint of type $(W, S)$. Let $S = \{r, s, t\}$ and assume $m_{sr} = 2 = m_{tr}$. By Corollary \ref{Corollary: Ro89Theorem7.1+7.2} it suffices to show that for any two sequences $\bar{u},\bar{v}$ of the same reduced type, $\bar{u} \simeq \bar{v}$ implies $\bar{u} = \bar{v}$. Therefore let $\bar{u} = (u_1, \ldots, u_k)$ and $\bar{v} = (v_1, \ldots, v_k)$ be two sequences of the same reduced type $(s_1, \ldots, s_k)$ such that $\bar{u} \simeq \bar{v}$ (note that $k \leq m_{st} +1$). If $(W, S)$ is of type $A_1 \times A_1 \times A_1$ the claim follows, because an elementary equivalence is just a permutation of the sequence and for each $s\in S$ there is at most one element of $U_s$ in such a sequence. Thus we can assume that $(W, S)$ is not of type $A_1 \times A_1 \times A_1$ and hence $m_{st} \geq 3$. Since $m_{sr} = 2 = m_{tr}$ we know that $r$ occurs at most once in the reduced type. If $u_i$ is in $U_r$, then the sequence with $u_i, u_{i-1}$ (resp. $u_i, u_{i+1}$) reversed is equivalent to $(u_1, \ldots, u_k)$, since $u_i$ is the only element in $U_r$ in the sequence $\bar{u}$. We also note that we can do an elementary equivalence in $\Delta_{st}$ only if $u_i$ is at position $1$ or $k$ in the sequence. If we do an elementary equivalence in $\Delta_{st}$ we have to do this twice (because of the type) and get the same sequence as we started with. Thus the claim follows.
\end{proof}

\subsection*{The action via left multiplication}

Let $\Delta = (\C, \delta)$ be a spherical Moufang building of type $(W, S)$. For each $s\in S$ we fix $1 \neq e_s \in U_{\alpha_s}$ and put $n_s := m(e_s)$. As we have already mentioned, every chamber of $\Delta$ can be written in the form $u_1n_1 \cdots u_kn_k B_+$, where $u_i \in U_{\alpha_{s_i}}$ and this decomposition is unique if one fixes the type $(s_1, \ldots, s_k)$. Since left multiplication of $G$ is an action on $\Delta$, we want to know how $g.u_1n_1 \cdots u_kn_k B$ looks like. Assume that $\Delta$ satisfies Condition $\lco$. Since $U_+ = \langle U_{\alpha_s} \mid s\in S \rangle$ as a consequence of Lemma \ref{coUplus} and $U_{-\alpha_s}^{n_s} = U_{\alpha_s}$ it suffices to consider this action for $U_{\alpha_s}, H_s$ and $n_s^{\pm 1}$ for each $s\in S$. For $s_1, \ldots, s_k \in S$ and $u_i \in U_{\alpha_{s_i}}$ we denote the chamber $u_1n_1 \cdots u_kn_kB_+$ by $(u_1, \ldots, u_k)$. We remark that we do not consider all chambers $g.(u_1, \ldots, u_k)$ for $g\in G$.

\begin{theorem}\label{MoufangLeftmultiplicationAction}
	Let $t, s_1, \ldots, s_k \in S$ and $u_i \in U_{\alpha_{s_i}}$. Let $\omega: G \times \Delta \to \Delta$ be the left multiplication of $G$ on $\Delta$. Then we have $\omega(g, ()) = ()$ for $g\in B_+$ and $\omega(g, (u_1, \ldots, u_k))$ is given by the following:
	\allowdisplaybreaks
	\begin{align*}
	(u_1^{g^{-1}}, \omega(g^{n_{s_1}}, (u_2, \ldots, u_k))) & \text{ if } g\in H_s \text{ for some } s\in S, \\
	(gu_1, u_2, \ldots, u_k) & \text{ if } g \in U_{\alpha_{s_1}}, \\
	(u_1, \omega(g^{n_{s_1}} [g, u_1]^{n_{s_1}}, (u_2, \ldots, u_k))) & \text{ if } g \in U_{\alpha_t}, s_1 \neq t\in S, \\
	\omega(n_s, \omega(n_s^{-2}, (u_1, \ldots, u_k))) & \text{ if } g = n_s^{-1} \\
	\omega(n_{s_1}^2, (u_2, \ldots, u_k)) & \text{ if } g = n_{s_1}, u_1 = 1, \\
	(\overline{u}_1, \omega(b(u_1), (u_2, \ldots, u_k))) & \text{ if } g = n_{s_1}, u_1 \neq 1, \\
	(1_{U_t}, u_1, \ldots, u_k) & \text{ if } g = n_t, l(ts_1 \cdots s_k) = k+1.
	\end{align*}
%	where $t_2, \ldots, t_k$ be in $S$ such that $s_1 \cdots s_k = tt_2 \cdots t_k$ and $(u_1, \ldots, u_k) = (v_1, \ldots, v_k)$. Then $\omega$ is an well-defined action. Furthermore, $\omega$ transforms a representative $g u_1n_1 \cdots u_kn_k$ of a chamber in the form of Lemma \ref{Ro89Lemma7.4}.
\end{theorem}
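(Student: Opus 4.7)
The seven assertions are established by direct computation inside $G$, acting on the coset $u_1 n_{s_1} \cdots u_k n_{s_k} B_+$ by left multiplication. The overarching strategy is to push $g$ to the right past the first two factors $u_1$ and $n_{s_1}$ and then invoke $\omega$ recursively on the residue word $(u_2, \ldots, u_k)$. The only ingredients are the relations listed in Theorem \ref{presentationofanRGDsystem}, the fact that $H$ normalises every root group (so that every $u^{g^{-1}}$ stays in the same root group), and the elementary commutator identity $gu = u \cdot g \cdot [g, u]$.

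The trivial cases come first. For $g \in U_{\alpha_{s_1}}$ one merely concatenates $g$ with $u_1$. For $g = n_s^{-1}$, factor $n_s^{-1} = n_s \cdot n_s^{-2}$ and observe $n_s^{-2} \in H_s$, reducing to the $H$-case followed by the $n_s$-case. For $g = n_{s_1}$ with $u_1 = 1$ the first two factors collapse to $n_{s_1}^2 \in H_{s_1}$, so again the $H$-case applies. For $g = n_{s_1}$ with $u_1 \neq 1$, the prescribed formula is exactly the relation $n_{s_1} u_1 n_{s_1} = \overline{u}_1 n_{s_1} b(u_1)$ from the presentation, read off on the coset. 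Finally, if $g = n_t$ with $\ell(ts_1 \cdots s_k) = k+1$, then the word $n_t \cdot u_1 n_{s_1} \cdots u_k n_{s_k}$ is already of the canonical form provided by the uniqueness of the decomposition in \cite[Lemma $(7.4)$]{Ro89}, since its type $(t, s_1, \ldots, s_k)$ is reduced.

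The two substantial cases require pushing $g$ past $u_1 n_{s_1}$ and verifying well-definedness of the recursion. For $g \in H_s$, the identity $g u_1 = u_1^{g^{-1}} g$ (valid since $H$ normalises $U_{\alpha_{s_1}}$) followed by $g n_{s_1} = n_{s_1} \cdot g^{n_{s_1}}$ gives the claim; the recursive call $\omega(g^{n_{s_1}}, (u_2, \ldots, u_k))$ is meaningful because $g^{n_{s_1}} \in H_{s_1} H_s$ by the preliminaries, and this product may be evaluated by iterating the $H$-case twice. For $g \in U_{\alpha_t}$ with $s_1 \neq t$, the commutator identity yields
\begin{align*}
g \cdot u_1 n_{s_1} \;=\; u_1 \cdot g [g,u_1] \cdot n_{s_1} \;=\; u_1 n_{s_1} \cdot g^{n_{s_1}} [g,u_1]^{n_{s_1}}.
\end{align*}
Axiom (RGD1) places $[g, u_1]$ inside $U_{(\alpha_t, \alpha_{s_1})}$, while $g^{n_{s_1}} \in U_{s_1 \alpha_t}$; hence the element fed into the recursive $\omega$ is a product of elements lying in root groups, each handled by the already-established $U_{\alpha}$-case. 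In both of these cases, the assumption that $\Delta$ satisfies Condition $\lco$ is used implicitly through Lemma \ref{coUplus} to guarantee that $U_+$ is generated by the $U_{\alpha_s}$, so that restricting to the listed generators is enough to describe the full action.

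The main obstacle is purely bookkeeping: keeping track of which root group or $H$-factor each conjugate $g^{n_{s_1}}$, $u_1^{g^{-1}}$, $[g, u_1]^{n_{s_1}}$ inhabits, and checking at every recursive step that the resulting chamber is written in the canonical form coming from \cite[Lemma $(7.4)$]{Ro89}. There is no conceptual difficulty; the theorem is essentially a compilation of the defining relations of a Moufang RGD-system, rephrased as a description of left multiplication on coset representatives.
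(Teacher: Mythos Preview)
Your proposal is correct and is precisely the computation the paper has in mind: the paper's own proof reads in full ``This is a straight forward computation,'' and you have written out that computation case by case, pushing $g$ past $u_1 n_{s_1}$ using the relations of Theorem~\ref{presentationofanRGDsystem} and the normalising property of $H$. One minor remark: the theorem as stated is a list of identities between chambers (cosets of $B_+$), not a recursive definition, so Condition $\lco$ is not needed to verify the identities themselves---it is invoked in the surrounding text only to justify that these generators suffice to determine the full action, not to make the individual formulas hold.
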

\begin{proof}
	This is a straight forward computation.
\end{proof}

\section{Foundations and enveloping groups}\label{Section: Foundations}

This subsection is based on \cite[Ch. $11$]{WenDiss} and \cite{Mu99}.

\subsection*{Foundations}

A \textit{foundation of type $(W, S)$} is a triple $\mathcal{F} := ( (\Delta_J)_{J \in E(S)}, (c_J)_{J \in E(S)}, (\phi_{rst})_{\{r, s\}, \{s, t\} \in E(S)})$ such that the following hold:
\begin{enumerate}[label=(F\arabic*), leftmargin=*]
	\item $\Delta_J = (\C_J, \delta_J)$ is a building of type $(\langle J \rangle, J)$ with $c_J \in \C_J$ for each $J \in E(S)$.
	
	\item Each \textit{glueing} $\phi_{rst}: \P_s(c_{\{r, s\}}) \to \P_s(c_{\{s, t\}})$ is a base-point preserving bijection.
	
	\item The $\phi_{rst}$ satisfy the cocycle condition $\phi_{tsu} \circ \phi_{rst} = \phi_{rsu}$.
\end{enumerate}
It follows from the definition that $\phi_{tst} = \id$ and that $\phi_{rst} = \phi_{tsr}^{-1}$. We say that the foundation $\mathcal{F}$ satisfies Condition $\lco$ if $\Delta_J$ satisfies Condition $\lco$ for each $J \in E(S)$. For each $J\subseteq S$ the \textit{$J$-residue} of $\mathcal{F}$ is the foundation $\mathcal{F}_J := ( (\Delta_I)_{I \in E(J)}, (c_I)_{I \in E(J)}, (\phi_{rst})_{\{r, s\}, \{s, t\} \in E(J)})$ of type $(\langle J \rangle, J)$. Two foundations $\mathcal{F} = ( (\Delta_J)_{J \in E(S)}, (c_J)_{J \in E(S)}, (\phi_{rst})_{\{r, s\}, \{s, t\} \in E(S)})$ and $\mathcal{F}' = ( (\Delta_J')_{J \in E(S)}, (c_J')_{J \in E(S)}, (\phi_{rst}')_{\{r, s\}, \{s, t\} \in E(S)})$ of the same type $(W, S)$ are called \textit{isomorphic} if there exist isomorphisms $\alpha_J: \Delta_J \to \Delta_J'$ for all $J \in E(S)$ such that $\alpha_J(c_J) = c_J'$ and for all $r, s, t \in S$ with $\{r, s\}, \{s, t\} \in E(S)$ we have $\phi_{rst}' \circ \alpha_{\{r, s\}} = \alpha_{\{s, t\}} \circ \phi_{rst}$.

\begin{remark}
	We remark that there is a notion of more general isomorphisms between foundations, which allow isomorphisms of the Coxeter system. In that sense our isomorphisms are called \textit{special}.
\end{remark}

Let $\mathcal{F} = ( (\Delta_J)_{J \in E(S)}, (c_J)_{J \in E(S)}, (\phi_{rst})_{\{r, s\}, \{s, t\} \in E(S)})$ be a foundation of type $(W, S)$. An \textit{apartment} of $\mathcal{F}$ is a tupel $(\Sigma_J)_{J \in E(S)}$ such that the following hold:
\begin{enumerate}[label=(FA\arabic*), leftmargin=*]
	\item $\Sigma_J$ is an apartment of $\Delta_J$ containing $c_J$ for each $J \in E(S)$.
	
	\item Given $\{ r, s \}, \{s, t\} \in E(S)$, then $\phi_{rst}( \Sigma_{\{r, s\}} \cap \P_s(c_{\{r, s\}}) ) = \Sigma_{\{s, t\}} \cap \P_s(c_{\{s, t\}})$.
\end{enumerate}

\subsection*{Moufang sets}

A \textit{Moufang set} is a pair $\left( X, (U_x)_{x\in X} \right)$, where $X$ is a set with $\vert X \vert \geq 3$ and for each $x\in X$, $U_x$ is a subgroup of $\Sym(X)$ (we compose from right to left) such that the following hold:
\begin{enumerate}[label=(MS\arabic*), leftmargin=*]
	\item For each $x\in X$, $U_x$ fixes $x$ and acts simply transitive on $X \backslash \{x\}$.
	
	\item For all $x, y \in X$ and each $g\in U_x$, $g \circ U_y \circ g^{-1} = U_{g(y)}$.
\end{enumerate}
The groups $U_x$ for $x\in X$ are called the \textit{root groups} of the Moufang set. Let $\left( X, ( U_x )_{x\in X} \right)$ and $\left( X', ( U_{x'})_{x'\in X'} \right)$ be two Moufang sets and let $\phi:X \to X'$ be a map. Then the Moufang sets are called \textit{$\phi$-isomorphic}, if $\phi$ is bijective and for all $x\in X$ we have $\phi \circ U_x \circ \phi^{-1} = U_{\phi(x)}$.

\begin{example}
	Let $\Delta$ be a thick, irreducible, spherical Moufang building of rank at least $2$. Let $P$ be a panel, let $p\in P$ and let $\Sigma$ be an apartment in $\Delta$ with $p \in \Sigma$. Let $\alpha$ denote the unique root in $\Sigma$ containing $p$ but not $P \cap \Sigma$. Let $U_p := \{ g \vert_P  \mid g \in U_{\alpha} \}$. Then the group $U_p$ is independent of the choice of the apartment $\Sigma$ and $\mathbb{M}(\Delta, P) := \left( P, ( U_p )_{p\in P} \right)$ is a Moufang set (cf. \cite[Notation $1.19$]{MPW15}).
\end{example}

\subsection*{Moufang foundations}

A foundation $\mathcal{F} = ( (\Delta_J)_{J \in E(S)}, (c_J)_{J \in E(S)}, (\phi_{rst})_{\{r, s\}, \{s, t\} \in E(S)})$ of $2$-spherical type $(W, S)$ is called \textit{Moufang foundation}, if the following hold:
\begin{enumerate}[label=(MF\arabic*), leftmargin=*]
	\item $\Delta_J$ is a Moufang building of type $(\langle J \rangle, J)$ for each $J \in E(S)$.
	
	\item Given $\{r, s\}, \{s, t\} \in E(S)$, the Moufang sets $\mathbb{M}(\Delta_{\{r, s\}}, \P_s(c_{\{r, s\}}))$ and $\mathbb{M}(\Delta_{\{s, t\}}, \P_s(c_{\{s, t\}}))$ are $\phi_{rst}$-isomorphic.
\end{enumerate}

\begin{example}
	Let $\Delta = (\Delta_+, \Delta_-, \delta_*)$ be a thick twin building of irreducible $2$-spherical type $(W, S)$ and of rank at least $3$. Let $\epsilon \in \{+,-\}$ and $c\in \C_{\epsilon}$. By \cite[$(8.3)$ Theorem $4$]{Ro00} the residue $R_J(c)$ with the restriction of the distance function is a Moufang building for each $J \in E(S)$ and one can verify that $( (\Delta_J)_{J \in E(S)}, (c_J)_{J \in E(S)}, (\phi_{rst})_{\{r, s\}, \{s, t\} \in E(S)} )$ is a Moufang foundation of type $(W, S)$, where $\Delta_J = (R_J(c), \delta_{\epsilon}), c_J := c$ and $\phi_{rst} = \id$. We will denote this Moufang foundation by $\mathcal{F}(\Delta, c)$. It is a (non-trivial) fact that for any $\epsilon \in \{+,-\}$ and $c, c' \in \C_{\epsilon}$ we have $\mathcal{F}(\Delta, c) \cong \mathcal{F}(\Delta, c')$.
\end{example}
\begin{comment}
	The set $\left. \left( \bigcup_{J \in E} \C_J \right) \right/ \sim$, where $x \sim y$ if there exists $r, s, t\in S$ pairwise distinct, $J := \{s, t\}, K := \{r, s\}$ such that $x \in \C_J, y\in \C_K$ and $\phi_{rst}(x) = y$, together with the adjacency generated by the corresponding adjacency of the $\Delta_J$ yields a chamber system, which we will denote by $\Cbf(\mathcal{F})$.
	
	We say that $\mathcal{F}$ \textit{supports} a building $\Delta = (\C, \delta)$ of type $(W, S)$ if $\Cbf(\mathcal{F})$ is isomorphic to $E_2(c)$ for some chamber $c\in \C$. 
\end{comment}

A foundation $\mathcal{F}$ is called \textit{integrable}, if there exists a twin building $\Delta$ of type $(W, S)$ and a chamber $c$ of $\Delta$ such that $\mathcal{F} \cong \mathcal{F}(\Delta, c)$. A foundation satisfies Condition $\lsco$ if it is $3$-spherical and if there exists a twin building $\Delta = (\Delta_+, \Delta_-, \delta_*)$ and a chamber $c$ of $\Delta$ such that $\mathcal{F} \cong \mathcal{F}(\Delta, c)$ and both buildings $\Delta_+, \Delta_-$ satisfy Condition $\lsco$. In particular, if a foundation satisfies Condition $\lsco$, it is integrable.

Let $\mathcal{F}$ be an integrable Moufang foundation. Then every panel contains at least $3$ chambers. Since $\mathcal{F}$ is integrable, there exists a twin building $\Delta$ and a chamber $c$ of $\Delta$ such that $\mathcal{F} \cong \mathcal{F}(\Delta, c)$. By Lemma \ref{E1thick} the twin building $\Delta$ is thick. Moreover, every irreducible integrable Moufang foundation satisfying Condition $\lco$ determines the isomorphism class of the corresponding twin building:

\begin{proposition}\label{Proposition: isomorphism of foundations yields iso of twin buildings}
	Let $\Delta = (\Delta_+, \Delta_-, \delta_*), \Delta' = (\Delta_+', \Delta_-', \delta_*')$ be two thick irreducible $2$-spherical twin buildings of type $(W, S)$ and of rank at least $3$ satisfying Condition $\lco$. Suppose $c\in \Delta_+, c' \in \Delta_+'$ such that $\mathcal{F}(\Delta, c) \cong \mathcal{F}(\Delta', c')$. Then $\Delta \cong \Delta'$.
\end{proposition}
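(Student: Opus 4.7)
\medskip
\noindent
\textbf{Proof plan.} The strategy is to reduce the claim to the extension theorem for isometries in $2$-spherical twin buildings due to M\"uhlherr and Ronan \cite{MR95}, whose hypothesis (the condition they call $\co$) is ensured by Condition $\lco$ assumed on both twin buildings.

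First I would translate the foundation isomorphism into a local isomorphism between the twin buildings. By construction of $\mathcal{F}(\Delta,c)$ we have $\Delta_J = R_J(c)$, $c_J = c$, and $\phi_{rst} = \id_{\P_s(c)}$, and analogously for $\mathcal{F}(\Delta',c')$. An isomorphism of foundations therefore amounts to a family of building isomorphisms $\alpha_J : R_J(c) \to R_J(c')$ with $\alpha_J(c) = c'$ for each $J \in E(S)$, which moreover satisfy $\alpha_{\{r,s\}}|_{\P_s(c)} = \alpha_{\{s,t\}}|_{\P_s(c)}$ whenever $\{r,s\},\{s,t\} \in E(S)$. Together with the fact that rank-$2$ residues of type $A_1 \times A_1$ are canonically determined by their two panels, the data $(\alpha_J)_{J \in E(S)}$ gives a well-defined, type- and adjacency-preserving bijection $\tilde{\alpha} : E_2(c) \to E_2(c')$ whose restriction to each rank-$2$ residue is an isomorphism of buildings.

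Second, I would invoke the extension theorem. Since $\Delta$ and $\Delta'$ are thick $2$-spherical twin buildings of rank at least $3$ satisfying $\lco$, the main result of \cite{MR95} asserts that any such local isomorphism of rank-$2$ neighbourhoods extends uniquely to an isomorphism of twin buildings. Applying this to $\tilde{\alpha}$ yields an isomorphism $\Delta \to \Delta'$, as claimed.

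The principal obstacle is the first step: one must verify that the cocycle compatibility built into the definition of an isomorphism of foundations is exactly what is needed so that the several rank-$2$ isomorphisms $\alpha_J$ glue into a single coherent map on the union $E_2(c) = \bigcup_{\vert J\vert \le 2} R_J(c)$. The glueing condition forces agreement on every shared panel $\P_s(c)$, which is the only overlap between distinct rank-$2$ residues through $c$; hence $\tilde{\alpha}$ is unambiguously defined. Once this is settled, the deep input is entirely carried by the M\"uhlherr-Ronan extension theorem.
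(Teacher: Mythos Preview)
Your proposal is correct and follows essentially the same approach as the paper. The paper's proof simply cites \cite[Theorem~11.1.12]{WenDiss} and \cite[Theorem~1.5]{MR95}; the former carries out precisely the glueing of the $\alpha_J$ into a coherent isometry on $E_2(c)$ that you spell out by hand, and the latter is the M\"uhlherr--Ronan extension theorem you invoke. Your identification of the overlap-compatibility on shared panels (including the $A_1\times A_1$ residues, which are not part of $E(S)$ but must still be handled) as the only point requiring care is exactly right.
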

\begin{proof}
	This is a consequence of \cite[Theorem $11.1.12$]{WenDiss} and \cite[Theorem $1.5$]{MR95}.
\end{proof}

\subsection*{The Steinberg group associated with an RGD-system}

Let $\mathcal{D} = (G, (U_{\alpha})_{\alpha \in \Phi})$ be an RGD-system of irreducible spherical type $(W, S)$ and rank at least $2$. Following \cite{Ti87}, the \textit{Steinberg group} associated with $\mathcal{D}$ is the group $\hat{G}$ which is the direct limit of the inductive system formed by the groups $U_{\alpha}$ and $U_{[\alpha, \beta]} := \langle U_{\gamma} \mid \gamma \in [\alpha, \beta] \rangle$ for all prenilpotent pairs $\{ \alpha, \beta \} \subseteq \Phi$. Then $U_{\alpha}, U_{[\alpha, \beta]} \to \hat{G}$ are injective and $(\hat{G}, (\hat{U}_{\alpha})_{\alpha \in \Phi})$ is an RGD-system of type $(W, S)$, where $\hat{U}_{\alpha}$ denotes the image of $U_{\alpha}$ in $\hat{G}$. Let $\Delta$ be the associated spherical building, i.e. $\Delta = \Delta(\mathcal{D})_+$, where $\Delta(\mathcal{D}) = (\Delta(\mathcal{D})_+, \Delta(\mathcal{D})_-, \delta_*)$ is the associated twin building. Then the kernel of $\hat{G} \to \Aut(\Delta)$ is equal to the center of $\hat{G}$ by \cite[Proposition $7.127(2)$]{AB08}.

\subsection*{The direct limit of a foundation}

In this subsection we let $\mathcal{F} = ((\Delta_J)_{J \in E(S)}, (c_J)_{J \in E(S)}, (\phi_{rst})_{\{r, s\}, \{s, t\} \in E(S)} )$ be a Moufang foundation of type $(W, S)$ satisfying Condition $\lco$ and let $(\Sigma_J)_{J \in E(S)}$ be an apartment of $\mathcal{F}$. As $\mathcal{F}$ is a Moufang foundation, the buildings $\Delta_J$ are Moufang buildings. We identify the roots in $\Sigma_J$ with $\Phi^J$. For $\alpha \in \Phi^J$ we let $U_{\alpha}^J$ be the root group associated with $\alpha \subseteq \Sigma_J$ and we let $H_J = \langle U_{\alpha}^J \mid \alpha \in \Phi^J \rangle \leq \Aut(\Delta_J)$. As $J \in E(S)$, it follows from \cite[Remark $7.107(a)$]{AB08} that the root groups $U_{\alpha}^J$ are nilpotent.

We note that for each $\{s, t\} \in E(S)$ the restriction $U_{\pm \alpha_s}^{\{s, t\}} \to U_{\pm \alpha_s}^{\{s, t\}} \vert_{\P_s(c_{\{s, t\}})}$ is an isomorphism. For each $s\in S$ we fix $s_0 \in S$ such that $m_{ss_0} > 2$ and we define $U_{\pm s} := U_{\pm \alpha_s}^{\{s, s_0\}} \vert_{\P_s(c_{\{s, s_0\}})}$. Using (MF$2$) we know that for each $t\in S$ with $\{s, t\} \in E(S)$ the mapping $U_{\pm s} \to U_{\pm \alpha_s}^{\{s, t\}} \vert_{\P_s(c_{\{s, t\}})}, g \mapsto \phi_{s_0 st} \circ g \circ \phi_{s_0 st}^{-1}$ is an isomorphism. Thus we have canonical isomorphisms $U_{\pm s} \to U_{\pm \alpha_s}^{\{s, t\}}$.

Let $J = \{s, s_0\} \in E(S)$. For each $s\in K \in E(S)$ we denote the image of $u \in U_{\pm s}$ in $\Aut(\Delta_K)$ by $u_K$. Then for every $1 \neq u \in U_s$ there exist $u', u'' \in U_{-s}$ such that $(u')_J u_J (u'')_J$ stabilises $\Sigma_J$ and acts on $\Sigma_J$ as the reflection $r_{\alpha_s}$. By \cite[Consequence $(3)$ on p.$415$]{AB08} $u', u''$ are unique. Let $s\in K \in E(S)$. By construction of $U_{\pm s} \to \Aut(\Delta_K)$ we know that $(u')_K u_K (u'')_K$ interchanges the elements in $\Sigma_K \cap \P_s(c_K)$ and stabilizes every panel $P$ with $\vert P \cap \Sigma_K \vert = 2$. As $K$ is spherical, this implies that $(u')_K u_K (u'')_K$ stabilizes $\Sigma_K$ and acts on $\Sigma_K$ as the reflection $r_{\alpha_s}$ (cf. \cite[proof of Lemma $7.5$]{AB08}). This implies that for $u \in U_s$ the elements $u', u'' \in U_{-s}$ do not depend on the choice of $s\in K \in E(S)$. We define for every $1 \neq u \in U_s$ the element $m(u) := u' u u''$, where $u', u'' \in U_{-s}$ are as above.

\begin{lemma}\label{Lemma: X_s RGD-system}
	Let $\pi_s: U_s \star U_{-s} \to \prod_{s\in J \in E(S)} \Aut(\Delta_J)$ be the canonical homomorphism and let $K_s := \ker \pi_s$. Let $X_s := \left( U_s \star U_{-s} \right)/K_s$. Then $U_{\pm s} \to X_s$ is injective and $(X_s, (U_{\pm s}))$ is an RGD-system of type $A_1$, where we identify $U_{\pm s}$ with its image in $X_s$.
\end{lemma}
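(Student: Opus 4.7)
The plan is to deduce both the injectivity and each RGD axiom from the fact that $X_s$, being the quotient of $U_s \star U_{-s}$ by $\ker \pi_s$, tautologically embeds into the product $\prod_{s\in J \in E(S)} \Aut(\Delta_J)$. This reduces every claim to a statement about the spherical Moufang buildings $\Delta_J$, where the standard rank-$1$ Moufang theory applies.

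First I would establish that $U_{\pm s} \to X_s$ is injective. The natural inclusions $U_{\pm s} \hookrightarrow U_s \star U_{-s}$ into the free product are injective. Projecting $\pi_s$ to the factor $\Aut(\Delta_J)$ for $J = \{s, s_0\}$ and restricting to $U_{\pm s}$ gives, by the very construction preceding the lemma, the isomorphism onto $U_{\pm \alpha_s}^J$. Hence $\pi_s\vert_{U_{\pm s}}$ is already injective, so $U_{\pm s}\cap K_s = \{1\}$ and the composition $U_{\pm s}\to X_s$ is injective. From this point on I identify $U_{\pm s}$ with its image in $X_s$ and record the embedding $X_s \hookrightarrow \prod_J \Aut(\Delta_J)$.

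Next I would check the five RGD axioms for the root system $\Phi = \{\alpha_s, -\alpha_s\}$ of type $A_1$. Axiom (RGD0) is immediate from the thickness of each $\Delta_J$. Axiom (RGD1) is vacuous since $\{\alpha_s, -\alpha_s\}$ is not prenilpotent and no other pairs of roots exist. For (RGD2), the element $m(u) = u'uu''$ constructed in the paragraph before the lemma satisfies, by its defining property, that $m(u)_J$ stabilises $\Sigma_J$ and induces $r_{\alpha_s}$ on it for every $J$ containing $s$; hence $m(u)_J$ conjugates $U_{\alpha_s}^J$ onto $U_{-\alpha_s}^J$ and vice versa. Because $X_s$ embeds into $\prod_J \Aut(\Delta_J)$, comparing images yields the equality of subgroups $m(u) U_s m(u)^{-1} = U_{-s}$ inside $X_s$. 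Axiom (RGD4) is trivial, since $X_s = \langle U_s \cup U_{-s}\rangle$ by construction.

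The remaining condition (RGD3) asks that $U_{-s}\not\subseteq U_+ = U_s$. A nontrivial element of $U_{-\alpha_s}^J$ cannot fix $c_J$ (it fixes $-\alpha_s$ pointwise and $c_J\notin -\alpha_s$), while every element of $U_{\alpha_s}^J$ does fix $c_J$; so $U_s\cap U_{-s} = \{1\}$ in each factor, and hence in $X_s$. Together with (RGD0) this gives $U_{-s}\not\subseteq U_s$. The only delicate step I foresee is making the ``descent'' in (RGD2) rigorous, i.e. upgrading the coincidence of subgroups inside each $\Aut(\Delta_J)$ to an equality inside $X_s$; but this is a formal consequence of the injectivity of $X_s \to \prod_J \Aut(\Delta_J)$, which was secured in the first step.
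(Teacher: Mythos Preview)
Your argument for injectivity of $U_{\pm s} \to X_s$ and for (RGD0), (RGD1), (RGD3), (RGD4) is correct and matches the paper's proof. The issue is in (RGD2), precisely at the step you flag as delicate---and your proposed resolution of it is wrong.

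You correctly observe that in each factor $\Aut(\Delta_J)$ the element $m(u)_J$ conjugates $U_{\alpha_s}^J$ onto $U_{-\alpha_s}^J$. But this does \emph{not} yield $m(u) U_s m(u)^{-1} = U_{-s}$ inside $X_s$ merely from the injectivity of $X_s \hookrightarrow \prod_J \Aut(\Delta_J)$. The image of $U_{-s}$ in the product is not $\prod_J U_{-\alpha_s}^J$ but the specific ``diagonal'' copy $\{(v_J)_J : v \in U_{-s}\}$ determined by the chosen isomorphisms $U_{-s} \to U_{-\alpha_s}^J$. What is needed is that for each $x \in U_s$ there is a \emph{single} $x' \in U_{-s}$ with $m(u)_J^{-1} x_J m(u)_J = (x')_J$ simultaneously for every $J$. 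Coordinatewise conjugation could a priori hit different elements of $U_{-\alpha_s}^J$ in different factors, in which case $m(u)^{-1} x m(u)$ would land in $\prod_J U_{-\alpha_s}^J$ but outside the image of $U_{-s}$. Two subgroups of an injective image can have identical projections to every factor without being equal.

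The paper closes this gap by invoking the construction of the maps $U_{\pm s} \to \Aut(\Delta_K)$: they are given by conjugation with the glueings $\phi_{s_0 s t}$, and (MF2) says these are Moufang-set isomorphisms. Consequently the entire action of $\langle U_s, U_{-s}\rangle$ on $\P_s(c_K)$ is obtained from its action on $\P_s(c_{\{s,s_0\}})$ by conjugating with $\phi_{s_0 s t}$; in particular the equation $m(u)^{-1} x m(u) = x'$ at the level of panel permutations, once established for $J = \{s, s_0\}$, transfers verbatim to every $K$. Since restriction $U_{-\alpha_s}^K \to U_{-\alpha_s}^K\vert_{\P_s(c_K)}$ is injective, this gives $m(u)_K^{-1} x_K m(u)_K = (x')_K$ in $\Aut(\Delta_K)$ for all $K$, hence $(x')^{-1} m(u)^{-1} x m(u) \in K_s$ and $m(u)^{-1} x m(u) = x'$ in $X_s$. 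So the missing ingredient is (MF2), not injectivity.
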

\begin{proof}
	Note that $U_{\pm s} \to \Aut(\Delta_J)$ are injective for every $s\in J \in E(S)$ and $U_{-\alpha_s}^J \not\leq U_{\alpha_s}^J$. Thus $U_{\pm s} \to X_s$ are injective and $U_{-s} \not\leq U_s$ in $X_s$. It suffices to show (RGD$2$). We show that $m(u) = u' u u''$ conjugates $U_{\pm s}$ to $U_{\mp s}$ for every $1 \neq u \in U_s$. Let $J = \{s, s_0\}$ and $x \in U_s$. Then $m(u)_J^{-1} x_J m(u)_J \in U_{-\alpha_s}^J$ and hence $m(u)_J^{-1} x_J m(u)_J = (x')_J$ for some $x' \in U_{-s}$. By construction of $U_{\pm s} \to \Aut(\Delta_K)$ we infer $m(u)_K^{-1} x_K m(u)_K = (x')_K$. Thus $(x')^{-1} m(u)^{-1} x m(u) \in K_s$ for every $x\in U_s$ and hence $m(u)^{-1} x m(u) K_s = x' K_s$ in $X_s$. This implies that $(X_s, (U_{\pm s}))$ is an RGD-system of type $A_1$.
\end{proof}

\begin{example}\label{Example:Xs injective}
	Let $\Delta$ be an irreducible twin building satisfying Condition $\lco$. Using \cite[Theorem $1.5$]{MR95} and \cite[Theorem $8.27$]{AB08}, $\Delta$ is a so-called \textit{Moufang twin building}. Let $c_+ \in \Delta_+, c_- \in \Delta_-$ and let $\Sigma$ be the twin apartment containing $c_+$ and $c_-$. Let $U_{\alpha}$ be the corresponding root groups and let $G = \langle U_{\alpha} \mid \alpha \in \Phi \rangle \leq \Aut(\Delta)$. Then $(G, (U_{\alpha})_{\alpha \in \Phi})$ is an RGD-system by \cite[$8.47(a)$]{AB08}. 
	
	Let $\mathcal{F} = \mathcal{F}(\Delta, c_+)$ and $\Sigma_J := R_J(c_+) \cap \Sigma$. Then $(\Sigma_J)_{J \in E(S)}$ is an apartment of $\mathcal{F}$. We consider the homomorphism $\phi: U_s \star U_{-s} \to \Aut(\Delta)$ be the canonical homomorphism. Let $g\in \ker \phi$. Then $g$ acts trivial on every rank $2$ residue and hence $g \in \ker \pi_s$. Now let $g\in \ker \pi_s$. Then $\phi(g) \in \langle U_{\alpha_s} \cup U_{-\alpha_s} \rangle$. As $\phi(g)$ fixes $\P_s(c_+)$, we deduce $\phi(g) \in H_{\alpha_s}$ and hence $\phi(g)$ fixes $\Sigma$. If $m_{st} >2$, then $\phi(g)$ fixes $R_{\{s, t\}}(c_+)$ by assumption. If $m_{st} = 2$, then $\phi(g)$ fixes $\P_t(c_+)$, as the corresponding root groups commute. In particular, $\phi(g)$ fixes a twin apartment and all neighbours of $c_+$. Thus $\phi(g) = 1$ by \cite[Theorem $1$]{Ro00} and we have $\ker \pi_s = \ker \phi$. In particular, $X_s \to \Aut(\Delta)$ is injective, where $X_s$ is as in the previous lemma.
\end{example}

Let $J = \{s, t\} \in E(S)$ and let $\hat{H}_J$ be the Steinberg group associated with $(H_J, (U_{\alpha}^J)_{\alpha \in \Phi^J})$. Let $\pi_J^{\mathrm{St}}: \hat{H}_J \to \Aut(\Delta_J), \pi_{s,J}: U_s \star U_{-s} \to \Aut(\Delta_J), \phi_s: U_s \star U_{-s} \to \hat{H}_J$ be the canonical homomorphisms. As $\pi_s: U_s \star U_{-s} \to \prod_{s\in K \in E(S)} \Aut(\Delta_K)$, we deduce $\ker \pi_s \leq \ker \pi_{s,J}$. As $\pi_{s,J} = \pi_J^{\mathrm{St}} \circ \phi_s$, we have $\phi_s(\ker \pi_s) \leq \phi_s( \ker \pi_{s,J} ) \leq \ker \pi_J^{\mathrm{St}}$. Since $\ker \pi_J^{\mathrm{St}} = Z(\hat{H}_J)$, we deduce $K_{st} := \phi_s(\ker \pi_s) \phi_t( \ker \pi_t ) \trianglelefteq \hat{H}_J$. Now $\hat{H}_J/K_{st}$ is again an RGD-system by \cite[$7.131$]{AB08}. Thus we let $\mathcal{D}_J = (\hat{H}_J/K_{st}, (\hat{U}_{\alpha}K_{st}/K_{st})_{\alpha \in \Phi^J})$ and write $X_{s, t} := \hat{H}_J/K_{st}$. Let $\psi_{st}: \hat{H}_J \to X_{s, t}$ be the canonical homomorphism. Then $\left( \psi_{st} \circ \phi_s\right)(\ker \pi_s) \leq \psi_{st}(K_{st}) = 1$ and hence $\psi_{st} \circ \psi_s$ factors through $X_s \to \hat{H}_J/K_{st}$.
\begin{center}
	\begin{tikzcd}
		&X_s = (U_s \star U_{-s}) / \ker \pi_s \arrow[rd] & \\
		U_s \star U_{-s} \arrow[ru] \arrow[r, "\phi_s"] \arrow[rd, "\pi_{s,J}"] & \hat{H}_J \arrow[r, "\psi_{st}"] \arrow[d, "\pi_J^{\mathrm{St}}"] & \hat{H}_J/K_{st} = X_{s, t} \arrow[ld] \\
		&\Aut(\Delta_J)&
	\end{tikzcd}
\end{center}

If $s\neq t\in S$ are such that $m_{st} = 2$, we define $X_{s, t} := X_s \times X_t$. We define $G$ to be the direct limit of the inductive system formed by $X_s, X_{s, t}$ for all $s\neq t\in S$. Note that $X_{s, t}$ is generated by the canonical image of $X_s, X_t$ in $X_{s, t}$ and hence $G = \langle X_s \mid s\in S \rangle$. We note that it is not clear whether $X_s \to G$ is injective. If we write $1 \neq u \in U_s$ we simply mean that $u \neq 1$ in the group $U_s$.

\begin{comment}

\subsection*{The direct limit of a foundation}

In this subsection we let $\mathcal{F} = ((\Delta_J)_{J \in S_2}, (c_J)_{J \in S_2}, (\phi_{rst})_{r \neq s \neq t \in S} )$ be a Moufang foundation of type $(W, S)$ satisfying Condition $\lco$ and let $(\Sigma_J)_{J \in E(S)}$ be an apartment of $\mathcal{F}$ (note that an apartment of a foundation does always exist). Then the buildings $\Delta_J$ are Moufang. For $s\in J$ we let $U_{\pm \alpha_s}^J$ be the root group associated to $\pm \alpha_s \subseteq \Sigma_J$. Let $r, s, t \in S$ be pairwise distinct. Then the root groups $U_{\alpha_s}^{\{r, s\}}$ and $U_{\alpha_s}^{\{s, t\}}$ are isomorphic by (MF$2$) \todo{Einschränkungen auf $P$ isomorph - warum Einschränkung $g \mapsto g \vert_P$ Isomorphismus?} (resp. $U_{-s}^{\{r, s\}}$ and $U_{-s}^{\{s, t\}}$). Furthermore, the group $\langle U_s^{\{r, s\}} \cup U_{-s}^{\{r, s\}} \rangle$ is isomorphic to the group $\langle U_s^{\{s, t\}} \cup U_{-s}^{\{s, t\}} \rangle$. Thus we obtain unique groups $U_s, U_{-s}$ and a unique group $X_s := \langle U_s \cup U_{-s} \rangle$. Let $s\neq t \in S$. If $m_{st} \geq 3$ we define $X_{s, t} := \langle U_s^{\{s, t\}} \cup U_{-s}^{\{s, t\}} \cup U_t^{\{s, t\}} \cup U_{-t}^{\{s, t\}} \rangle$. If $m_{st} = 2$ we define $X_{s, t} := X_s \times X_t$. Note that in both cases $X_s$ can be seen as a subgroup of $X_{s, t}$. We define $G$ to be the direct limit of the groups $X_s$ and $X_{s, t}$ with the natural inclusions, where $s \neq t\in S$.

\end{comment}

\begin{lemma}
	Let $s_1, \ldots, s_k, s \in S$ and let $u_i, v_i \in U_{s_i} \backslash \{1\}$. Then $U_s^{m(u_1) \cdots m(u_k)} = U_s^{m(v_1) \cdots m(v_k)}$. 
\end{lemma}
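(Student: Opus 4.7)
The plan is to introduce a ``torus'' $H \leq G$ that contains all ``rotations'' $m(u)m(v)^{-1}$ for $u, v \in U_{r} \setminus \{1\}$ (same $r \in S$), to show that $H$ normalises every root group $U_r$, and to observe that $(m(u_1)\cdots m(u_k))(m(v_1)\cdots m(v_k))^{-1}$ lies in $H$; the lemma then follows at once, since $U_s^g = U_s^h$ if and only if $gh^{-1}$ normalises $U_s$. Concretely, for each $r \in S$ I set $H_r := \langle m(u) m(v)^{-1} \mid 1 \neq u, v \in U_r\rangle$, which is the torus of the $A_1$-RGD-system $X_r$ from Lemma \ref{Lemma: X_s RGD-system}, and regard it as a subgroup of $G$ via the canonical map $X_r \to G$. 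I then put $H := \langle H_r \mid r \in S \rangle \leq G$.

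The two structural claims required are:
\begin{enumerate}[label=(\roman*)]
    \item $H$ normalises $U_{r'}$ for every $r' \in S$;
    \item for every $s \in S$ and every $1 \neq u \in U_s$ one has $m(u)^{-1} H m(u) \subseteq H$.
\end{enumerate}
Both reduce to purely rank-$2$ facts collected in the preliminaries. For (i), if $r' = r$ this is built into the $A_1$-RGD-structure of $X_r$; if $r' \neq r$, then inside $X_{r, r'}$ (an RGD-system of type $I_2(m_{rr'})$ when $m_{rr'} \geq 3$, and $X_r \times X_{r'}$ otherwise) the torus equals $H_r H_{r'}$ and normalises every root group, which transfers to $G$ through the canonical map $X_{r, r'} \to G$. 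For (ii), the case $s = r$ is again part of the $A_1$-RGD-structure of $X_r$, while for $s \neq r$ one has $H_r^{m(u)} \subseteq H_s H_r \subseteq H$ inside $X_{s, r}$ by \cite[Consequence $(6)$ on p.$415$]{AB08}.

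With (i) and (ii) in hand, I set
\[
P_i := m(u_i) m(u_{i+1}) \cdots m(u_k) m(v_k)^{-1} \cdots m(v_i)^{-1} \qquad (1 \leq i \leq k)
\]
and prove $P_i \in H$ by downward induction on $i$. The base case $P_k = m(u_k) m(v_k)^{-1} \in H_{s_k}$ is immediate; for the step, write
\[
P_i = \bigl(m(u_i) P_{i+1} m(u_i)^{-1}\bigr) \cdot \bigl(m(u_i) m(v_i)^{-1}\bigr),
\]
where the first factor lies in $H$ by (ii) together with the inductive hypothesis, and the second in $H_{s_i}$. Hence $P_1 = (m(u_1)\cdots m(u_k))(m(v_1)\cdots m(v_k))^{-1}$ belongs to $H$; by (i) it normalises $U_s$, and the two conjugates coincide.

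The only delicate point is verifying that the rank-$2$ identities genuinely transfer through the canonical maps $X_r \to G$ and $X_{r, r'} \to G$ (which are not a priori injective), but this is automatic from the direct-limit definition of $G$: any relation holding in $X_r$ or in $X_{r, r'}$ also holds in its image in $G$. I expect this bookkeeping, rather than any deeper combinatorial issue, to be the only real obstacle, since steps (i) and (ii) are standard consequences of the RGD-axioms quoted in the excerpt.
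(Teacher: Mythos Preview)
Your proof is correct and conceptually clean. One small wrinkle: claim (ii) as stated gives $m(u)^{-1}Hm(u)\subseteq H$, whereas in the inductive step you use $m(u_i)Hm(u_i)^{-1}\subseteq H$. This is harmless since $m(u)^2\in H_s\subseteq H$, so $m(u)Hm(u)^{-1}=m(u)^2\bigl(m(u)^{-1}Hm(u)\bigr)m(u)^{-2}\subseteq H$; alternatively one may invoke $m(u)^{-1}=m(u^{-1})$. Either way (ii) is in fact an equality, and your induction goes through.

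The paper takes a different route. Rather than assembling a torus $H$ and proving its invariance, it argues directly by induction on $k$ using the identity $t\,m(u)\,t^{-1}=m(tut^{-1})$ for $t\in H_{s_k}$ (Consequence~(4) in \cite{AB08}). Setting $t:=m(v_k)m(u_k)^{-1}\in H_{s_k}$, one rewrites
\[
t\,m(u_{k-1})^{-1}\cdots m(u_1)^{-1}\,U_s\,m(u_1)\cdots m(u_{k-1})\,t^{-1}
= m(tu_{k-1}t^{-1})^{-1}\cdots m(tu_1t^{-1})^{-1}\,U_s\,m(tu_1t^{-1})\cdots m(tu_{k-1}t^{-1}),
\]
and then applies the inductive hypothesis with the new arguments $tu_it^{-1}\in U_{s_i}\setminus\{1\}$. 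Both arguments ultimately rest on the same rank-$\leq 2$ facts from \cite[p.~415]{AB08}; yours packages them into the structural statement ``$H$ is $m(\cdot)$-stable and normalises every $U_s$'' (essentially the torus of the eventual RGD-system), while the paper's is a shorter ad hoc computation that avoids introducing $H$ explicitly.
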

\begin{proof}
	We show the hypothesis by induction on $k$. For $k=1$ we have $m(v_1)^{-1} m(u_1) \in H_{s_1} \leq N_{X_{s_1, s}}(U_s)$. Thus we assume $k>1$. Using \cite[Consequences $(4)$ and $(5)$ on p.$415$]{AB08} we deduce $tm(u_i)t^{-1} = m(tu_it^{-1})$ and $tm(u_i)^{-1} t^{-1} = m(tu_it^{-1})^{-1}$ for each $t\in H_t$ and $u_i \in U_{s_i}$. Note that $tu_i t^{-1} \in U_{s_i}$. Using induction, this implies for $t := m(v_k) m(u_k)^{-1} \in H_{s_k}$:
	\allowdisplaybreaks
	\begin{align*}
		&t m(u_{k-1})^{-1} \cdots m(u_1)^{-1} U_s m(u_1) \cdots m(u_{k-1}) t^{-1} \\
		&\qquad = m(tu_{k-1}t^{-1})^{-1} \cdots m(tu_1t^{-1})^{-1} t U_s t^{-1} m(tu_1t^{-1}) \cdots m(tu_{k-1}t^{-1}) \\
		&\qquad = m(tu_{k-1}t^{-1})^{-1} \cdots m(tu_1t^{-1})^{-1} U_s m(tu_1t^{-1}) \cdots m(tu_{k-1}t^{-1}) \\
		&\qquad = m(v_{k-1})^{-1} \cdots m(v_1)^{-1} U_s m(v_1) \cdots m(v_{k-1}) \qedhere
	\end{align*}
\end{proof}

\begin{lemma}
	Suppose $s, t, s_1, \ldots, s_k, t_1, \ldots, t_l \in S$ such that $s_1 \cdots s_k \alpha_s = t_1 \cdots t_l \alpha_t$. Let $1 \neq u_i \in U_{s_i}, 1 \neq v_i \in U_{t_i}$. Then $U_s^{m(u_k) \cdots m(u_1)} = U_t^{m(v_l) \cdots m(v_1)}$.
\end{lemma}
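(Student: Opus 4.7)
By the previous lemma, the subgroup $U_s^{m(u_k)\cdots m(u_1)}$ does not depend on the choice of representatives $u_i \in U_{s_i}\setminus\{1\}$, so we may write $U(s_1,\ldots,s_k;s)$ for it and analogously $U(t_1,\ldots,t_l;t)$ for the right-hand side. The claim thus reduces to showing that $U(s_1,\ldots,s_k;s)$ depends only on the root $\alpha := s_1\cdots s_k\alpha_s$ of $(W,S)$, and I would prove this by induction on $k+l$. The base case $k=l=0$ is immediate, since $\alpha_s=\alpha_t$ forces $s=t$.

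For the inductive step, the easy sub-case is $s_1 = t_1 =: r$. There $s_2\cdots s_k\alpha_s = r(s_1\cdots s_k\alpha_s) = r(t_1\cdots t_l\alpha_t) = t_2\cdots t_l\alpha_t$, so the induction hypothesis yields $U_s^{m(u_k)\cdots m(u_2)} = U_t^{m(v_l)\cdots m(v_2)}$. Conjugating both sides by $m(u_1)$, the left side becomes $U_s^{m(u_k)\cdots m(u_1)}$; for the right side I invoke the previous lemma for the sequence $(t_l,\ldots,t_2,r)$ with base $U_t$, noting that $u_1,v_1 \in U_r = U_{t_1}$, which produces $U_t^{m(v_l)\cdots m(v_1)}$ as required.

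The remaining sub-case, $s_1 \neq t_1$, is the crux. My plan is to reduce to the easy case by modifying one of the sequences through moves that preserve $U(s_1,\ldots,s_k;s)$. Two fundamental moves are available: \emph{(i)} insertion or deletion of a trivial pair $(r,r)$ at the rightmost (innermost) position, which works because $m(y)m(y') \in H_r$ normalizes $U_s$ inside $X_{r,s}$; and \emph{(ii)} braid replacement of $p(r,q)$ by $p(q,r)$ at the innermost position, justified by the identity $p(n_r,n_q) = p(n_q,n_r)$ in the rank-$2$ subgroup $X_{r,q}$ together with the previous lemma to absorb the discrepancy between $p(m(u_r),m(u_q))$ and $p(n_r,n_q)$ into an $H_rH_q$-element, which again normalizes $U_s$. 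Combined with the symmetric option of modifying the $t$-side, these moves allow me to align the initial letters, reducing to the previous sub-case.

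The main obstacle is the combinatorial step: showing that any two pointed sequences $(s_1,\ldots,s_k;s)$ and $(t_1,\ldots,t_l;t)$ with $s_1\cdots s_k\alpha_s = t_1\cdots t_l\alpha_t$ can be connected by a finite chain of the above moves, all performed at the innermost position. This amounts to a Matsumoto-type reduction together with an analysis of the ambiguity in the pair $(w,s) \in W \times S$ mapping $\alpha_s$ to a fixed root $\alpha$, which is controlled by the right-coset of the stabiliser $C_W(s) \cap \mathrm{Stab}(\alpha_s)$. The delicate point is that each modification must be valid in the direct limit $G$ and not merely in an ambient rank-$2$ subgroup; this is exactly what the previous lemma and the rank-$2$ RGD structure of the $X_{r,q}$ are designed to supply.
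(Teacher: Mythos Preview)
Your proposal has the right instincts but leaves the crucial step unresolved, and the paper's argument is structured rather differently.

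First, your restriction to \emph{innermost} moves is unnecessary and makes the combinatorics harder than it needs to be. In fact your move (ii) works at \emph{any} position: the discrepancy $p(m(u_r),m(u_q))\,p(m(u_q),m(u_r))^{-1}$ lies in $\langle H_r \cup H_q\rangle$ (using $p(n_r,n_q)=p(n_q,n_r)$ in $X_{r,q}$, which then holds in $G$), and elements of $H_r, H_q$ normalise every $U_s$ and commute past the $m(u_j)$'s up to changing representatives, which the previous lemma absorbs. Once braid moves and pair cancellations are allowed everywhere, Matsumoto's theorem gives immediately that $U(s_1,\ldots,s_k;s)$ depends only on $w=s_1\cdots s_k\in W$ and on $s$. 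This is how the paper packages the first half of the argument: it simply checks that conjugation by $m(u)$ defines a $W$-action on the set of conjugates of the $U_s$ (the relations $s^2=1$ and $(st)^{m_{st}}=1$ act via elements of $H_s$ resp.\ $\langle H_s\cup H_t\rangle$, hence trivially).

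Second, and more importantly, the step you flag as ``the main obstacle'' is genuinely the heart of the lemma, and your sketch does not supply the missing idea. After establishing the $W$-action, one still must show that if $w\alpha_s=\alpha_s$ then $w$ fixes $U_s$; equivalently, that the stabiliser of the root $\alpha_s$ in $W$ acts trivially. This is \emph{not} a consequence of Matsumoto-type word combinatorics: the stabiliser of a root is typically infinite and is not generated in any obvious way by reflections with an evident action on $U_s$. The paper's key input here is geometric: it invokes the wall-connectivity result (Lemma~\ref{CM06Prop2.7}, from Caprace--M\"uhlherr) to connect the panel $\{1,s\}$ to $\{w,ws\}$ along $\partial\alpha_s$ through a chain of spherical rank~$2$ residues in $\partial^2\alpha_s$. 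One then inducts on the length of this chain, reducing at each step to a rank~$2$ computation inside some $X_{p,q}$, where the claim is immediate. Your proposal contains no analogue of this step, and the ``analysis of the ambiguity in the pair $(w,s)$'' you allude to would, if carried out, essentially have to rediscover this wall argument.
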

\begin{proof}
	At first we show that we have an action of $W$ on the conjugacy class of $U_s$, where $s\in S$ acts on every conjugacy class by conjugation with $m(u)$ for some $1 \neq u \in U_s$. By the previous lemma, conjugation with $m(u)$ does not depend on $1 \neq u \in U_s$ and $m(u)^2$ acts trivial on every conjugacy class. Moreover, $(st)^{m_{st}}$ acts as $\left( m(u_s) m(u_t) \right)^{m_{st}} \in \langle H_s \cup H_t \rangle$ (cf. \cite[Lemma $3.3$]{Ca06}) and hence trivial.
	
	We are now in the position to prove the claim. If $s\neq t$, there exists $w\in \langle s, t \rangle$ such that $w\alpha_s = \alpha_t$ and $w^{-1} U_s w = U_t$ holds in $X_{s, t}$. Thus we can assume $s=t$. It suffices to show that for $w\in W$ with $w\alpha_s = \alpha_s$ we have $w^{-1} U_s w = U_s$. Thus let $w\in W$ such that $w\alpha_s = \alpha_s$. Then $ws \notin \alpha_s$ and hence $\{ w, ws \} \in \partial \alpha_s$. By Lemma \ref{CM06Prop2.7} there exist a sequence of panels $P_0 = \{ 1, s \}, \ldots, P_n = \{ w, ws \}$ contained in $\partial \alpha_s$ and a sequence of spherical rank $2$ residues $R_1, \ldots, R_n$ contained in $\partial^2 \alpha_s$ such that $P_{i-1}, P_i$ are distinct and contained in $R_i$. We show that claim via induction on $n$. For $n = 0$ there is nothing to show. Let $n>0$ and let $x \in P_{n-1} \cap \alpha_s$. Then $x^{-1}w \in \langle J \rangle$ for some $J \subseteq S$ with $\vert J \vert = 2$. Using induction, we deduce $w^{-1} U_s w = (x^{-1}w)^{-1} x^{-1} U_s x (x^{-1}w) = (x^{-1}w)^{-1} U_s (x^{-1}w) = U_s$. This finishes the claim.
\end{proof}

For $s\in S$ we define $U_{\alpha_s} := U_s$ and for $\alpha \in \Phi$ we define $U_{\alpha}$ as a conjugate of $U_s$ as in (RGD$2$). This is well-defined by the previous lemma. We put $\mathcal{D}_{\mathcal{F}} := (G, (U_{\alpha})_{\alpha \in \Phi})$.

\begin{lemma}\label{RGD24-1fornonnested}
	The system $\mathcal{D}_{\mathcal{F}}$ satisfies (RGD$2$) and (RGD$4$). Furthermore, (RGD$1$) holds for each pair $\{ \alpha, \beta \}$ of prenilpotent roots with $o(r_{\alpha} r_{\beta}) < \infty$.
\end{lemma}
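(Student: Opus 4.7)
The plan is to treat each of the three assertions separately, with the preceding well-definedness lemma for $U_\alpha$ doing most of the work. For (RGD$4$), the claim is immediate: since $G$ is the direct limit of the groups $X_s$ and $X_{s,t}$ ($s, t \in S$), the canonical images of the $X_s$ generate $G$, and each $X_s = \langle U_{\alpha_s}, U_{-\alpha_s}\rangle$. Hence $G = \langle U_\alpha \mid \alpha \in \Phi\rangle \subseteq H\langle U_\alpha \mid \alpha \in \Phi\rangle$.

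For (RGD$2$), fix $1 \neq u \in U_{\alpha_s}$ and $\beta \in \Phi$. Writing $\beta = s_1 \cdots s_k \alpha_r$ and choosing non-trivial $u_i \in U_{s_i}$, the previous lemma gives $U_\beta = U_r^{m(u_k) \cdots m(u_1)}$. The word $ss_1 \cdots s_k$ realises the root $s\beta$, so the same construction applied to this extended word with the given $u$ at the new initial position yields
\[
U_{s\beta} = U_r^{m(u_k) \cdots m(u_1) m(u)} = m(u)^{-1} U_\beta m(u),
\]
which is precisely (RGD$2$), with $m(u) = u'uu''$ and $u', u'' \in U_{-\alpha_s}$ as in the paper's definition.

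For the restricted (RGD$1$), the strategy is to pull the commutator relation from the rank-$2$ RGD-system $X_{s,t}$ and transport it into $G$. A prenilpotent pair $\{\alpha, \beta\}$ with $o(r_\alpha r_\beta) < \infty$ has $\langle r_\alpha, r_\beta\rangle$ a finite dihedral reflection subgroup of $W$, hence conjugate to a standard parabolic $\langle J\rangle$ with $J = \{s, t\} \subseteq S$ and $m_{st} < \infty$. This produces $w \in W$ with $w^{-1}\alpha, w^{-1}\beta \in \Phi^J$; a short check using $r_\gamma \in \langle r_\alpha, r_\beta\rangle$ for each $\gamma \in [\alpha, \beta]$ shows $w^{-1}((\alpha, \beta)) = (w^{-1}\alpha, w^{-1}\beta) \subseteq \Phi^J$. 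Since $X_{s,t}$ is an RGD-system of type $(\langle J\rangle, J)$ by construction, the rank-$2$ commutator axiom gives
\[
[U_{w^{-1}\alpha}, U_{w^{-1}\beta}] \subseteq U_{(w^{-1}\alpha, w^{-1}\beta)}
\]
inside $X_{s,t}$. Choosing a word $w = t_1 \cdots t_k$ and setting $n := m(u_k) \cdots m(u_1) \in G$ for non-trivial $u_i \in U_{t_i}$, the previous lemma identifies $n^{-1} U_{w^{-1}\gamma} n = U_\gamma$ for every $\gamma \in \{\alpha, \beta\} \cup (\alpha, \beta)$. Pushing the above inclusion through the canonical map $X_{s,t} \to G$ and conjugating by $n$ then yields $[U_\alpha, U_\beta] \subseteq U_{(\alpha, \beta)}$ in $G$.

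The main obstacle I anticipate is the bookkeeping in this final step, since the map $X_{s,t} \to G$ is not known to be injective; one therefore has to argue with images of subgroups rather than actual identifications. This causes no real harm because subgroup inclusions are preserved under any homomorphism, and simultaneous conjugation by the single element $n \in G$ preserves inclusions among images, so the rank-$2$ commutator relation transports cleanly into $G$.
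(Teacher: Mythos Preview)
Your proof follows essentially the same strategy as the paper's, and the arguments for (RGD$2$) and (RGD$4$) are correct. For (RGD$1$), however, your justification has a gap: it is not true in general that a finite dihedral reflection subgroup $\langle r_\alpha, r_\beta\rangle$ is \emph{conjugate to} a rank-$2$ standard parabolic $\langle s, t\rangle$. For instance, in type $I_2(4)$ with generators $s, t$, the reflections $s$ and $tst$ generate a subgroup of order $4$, which is not conjugate to any standard parabolic of $\langle s, t\rangle$. What you actually need (and what your next sentence correctly asserts) is that $\langle r_\alpha, r_\beta\rangle$ is \emph{contained in} a conjugate of a rank-$2$ standard parabolic, equivalently that there exist $w \in W$ and $J = \{s, t\} \subseteq S$ with $w^{-1}\alpha, w^{-1}\beta \in \Phi^J$. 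The paper obtains this directly via the geometric observation that $o(r_\alpha r_\beta) < \infty$ forces the walls $\partial\alpha$ and $\partial\beta$ to share a spherical rank-$2$ residue $R \in \partial^2\alpha \cap \partial^2\beta$; taking $w = \proj_R 1_W$ then gives the required element. Once this $w$ is in hand, your transport argument via the canonical map $X_{s,t} \to G$ and conjugation matches the paper's exactly.
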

\begin{proof}
	It follows by the definition of $G$ and $U_{\alpha}$ that (RGD$2$) and (RGD$4$) are satisfied. Let $\{ \alpha, \beta \}$ be a prenilpotent pair with $o(r_{\alpha} r_{\beta}) < \infty$. Then there exists $R \in \partial^2 \alpha \cap \partial^2 \beta$. Let $w = \proj_R 1_W$. Then $w^{-1} \alpha, w^{-1} \beta \in \Phi^{\{s, t\}}$ for some $s\neq t \in S$. As $[U_{w^{-1}\alpha}, U_{w^{-1}\beta}] \leq U_{(w^{-1}\alpha, w^{-1}\beta)}$ holds in $X_{s, t}$, it also holds in $G$. This implies $[U_{\alpha}, U_{\beta}]^w = [ U_{w^{-1}\alpha}, U_{w^{-1}\beta} ] \leq U_{(w^{-1}\alpha, w^{-1}\beta)}$ and hence $[U_{\alpha}, U_{\beta}] \leq U_{(w^{-1}\alpha, w^{-1}\beta)}^{w^{-1}} = U_{(ww^{-1}\alpha, ww^{-1}\beta)} = U_{(\alpha, \beta)}$.
\end{proof}

\begin{lemma}\label{Uwnilpotentgroup}
	Let $(c_0, \ldots, c_k)$ be a minimal gallery and let $(\alpha_1, \ldots, \alpha_k)$ be the sequence of roots which are crossed by that minimal gallery. Then we have $[U_{\alpha_1}, U_{\alpha_k}] \leq \langle U_{\alpha_2}, \ldots, U_{\alpha_k} \rangle$. In particular, $U_{\alpha_1} \cdots U_{\alpha_k}$ is a nilpotent group.
\end{lemma}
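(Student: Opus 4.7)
I would prove Lemma~\ref{Uwnilpotentgroup} by induction on $k$. For $k=1$, the root group $U_{\alpha_1}$ is by construction isomorphic to one of the root groups $U_{\alpha_s}^J$ of a Moufang building, and such groups are nilpotent by \cite[Remark~$7.107(a)$]{AB08} (as noted just before Lemma~\ref{Lemma: X_s RGD-system}); this is the base case. For $k \geq 2$, apply the inductive hypothesis to every sub-gallery $(c_{i-1}, c_i, \ldots, c_j)$ of length $j-i+1 < k$, whose crossed roots are $\alpha_i, \ldots, \alpha_j$; this provides, for every pair $(i,j) \neq (1,k)$, both the commutator inclusion $[U_{\alpha_i}, U_{\alpha_j}] \leq \langle U_{\alpha_{i+1}}, \ldots, U_{\alpha_j}\rangle$ and the nilpotency of $U_{\alpha_i} \cdots U_{\alpha_j}$. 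The task therefore reduces to the single case $(1,k)$: once this is settled, $U_{\alpha_1}$ normalises $\langle U_{\alpha_2}, \ldots, U_{\alpha_k}\rangle$, so $U_{\alpha_1}\cdots U_{\alpha_k}$ is a group, and nilpotency follows from the filtration by the subgroups $\langle U_{\alpha_j}, \ldots, U_{\alpha_k}\rangle$ together with the inductive commutator bounds.

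The decisive combinatorial observation, valid for both cases, is that any root $\gamma \in (\alpha_1, \alpha_k)$ satisfies $c_0 \in \alpha_1 \cap \alpha_k \subseteq \gamma$ and $c_k \in (-\alpha_1)\cap(-\alpha_k) \subseteq -\gamma$, so $\gamma$ separates $c_0$ from $c_k$ and hence equals some $\alpha_i$ with $2 \leq i \leq k-1$. Consequently $U_{(\alpha_1, \alpha_k)} \leq \langle U_{\alpha_2}, \ldots, U_{\alpha_{k-1}}\rangle$. If $o(r_{\alpha_1} r_{\alpha_k}) < \infty$, the pair $\{\alpha_1, \alpha_k\}$ is prenilpotent and non-nested, so Lemma~\ref{RGD24-1fornonnested} applies and gives the stronger inclusion $[U_{\alpha_1}, U_{\alpha_k}] \leq U_{(\alpha_1, \alpha_k)} \leq \langle U_{\alpha_2}, \ldots, U_{\alpha_{k-1}}\rangle$, finishing this case at once.

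The hard part is the nested case $o(r_{\alpha_1} r_{\alpha_k}) = \infty$, where Lemma~\ref{RGD24-1fornonnested} is silent. My plan here is to exploit the $2$-sphericality of $(W, S)$: every consecutive pair $\{\alpha_i, \alpha_{i+1}\}$ in the minimal gallery has finite dihedral order $m_{s_i s_{i+1}}$, and the same combinatorial observation applied to the length-$2$ sub-gallery $(c_{i-1}, c_i, c_{i+1})$ shows $(\alpha_i, \alpha_{i+1}) = \emptyset$, whence Lemma~\ref{RGD24-1fornonnested} yields $[U_{\alpha_i}, U_{\alpha_{i+1}}] = 1$. Applying the Hall--Witt identity to $u_1 \in U_{\alpha_1}$, $u_2 \in U_{\alpha_2}$ and $u_k \in U_{\alpha_k}$, the factor $[u_1, u_2^{-1}] = 1$ kills the first summand, and one is left with an identity expressing (a conjugate of) $[[u_k, u_1^{-1}], u_2]$ in terms of $[[u_2, u_k^{-1}], u_1]^{u_k}$. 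Since by the inductive hypothesis applied to $(c_1, \ldots, c_k)$ one has $[u_2, u_k^{-1}] \in \langle U_{\alpha_3}, \ldots, U_{\alpha_k}\rangle = U_{\alpha_3}\cdots U_{\alpha_k}$, this element can be written as a product of factors in the $U_{\alpha_j}$ with $3 \leq j \leq k$, each of which can then be commuted past $u_1$ using the inductive inclusion $[U_{\alpha_1}, U_{\alpha_j}] \leq \langle U_{\alpha_2}, \ldots, U_{\alpha_j}\rangle$ available for $j < k$.

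The main obstacle is the bookkeeping of the unique factor from $U_{\alpha_k}$ that appears in this expansion: naively commuting such a factor past $u_1$ would invoke the very case $(1,k)$ we are proving, causing a circularity. The way to sidestep this is that the right-hand side of the desired inclusion $\langle U_{\alpha_2}, \ldots, U_{\alpha_k}\rangle$ deliberately includes $U_{\alpha_k}$ itself; one therefore keeps the $U_{\alpha_k}$-contribution inside the already-nilpotent group $\langle U_{\alpha_2}, \ldots, U_{\alpha_k}\rangle$ and absorbs the residual conjugates via the inductive commutator bounds $[U_{\alpha_j}, U_{\alpha_k}] \leq \langle U_{\alpha_{j+1}}, \ldots, U_{\alpha_k}\rangle$ for $j \geq 2$. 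Executing this bookkeeping precisely so as to land the final expression in $\langle U_{\alpha_2}, \ldots, U_{\alpha_k}\rangle$ without re-opening the forbidden pair $(1,k)$ is the delicate technical point that drives the proof.
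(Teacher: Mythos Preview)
Your treatment of the base case and of the non-nested situation $o(r_{\alpha_1}r_{\alpha_k})<\infty$ via Lemma~\ref{RGD24-1fornonnested} is correct, as is the observation that $(\alpha_1,\alpha_2)=\emptyset$ and hence $[U_{\alpha_1},U_{\alpha_2}]=1$.

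The gap is in the nested case. The Hall--Witt identity only controls the \emph{triple} commutator $[[u_k,u_1^{-1}],u_2]$, not the element $[u_1,u_k]$ you need. When you expand $[u_2,u_k^{-1}]=v_3\cdots v_k$ with $v_j\in U_{\alpha_j}$ and compute $[v_3\cdots v_k,u_1]$, the last factor is $[v_k,u_1]\in[U_{\alpha_k},U_{\alpha_1}]$, which is precisely the forbidden pair. You cannot absorb this term into $\langle U_{\alpha_2},\ldots,U_{\alpha_k}\rangle$ without already knowing the conclusion, and even if you could, you would only have bounded $[[u_k,u_1^{-1}],u_2]$, which says nothing about $[u_k,u_1^{-1}]$ itself. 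No amount of bookkeeping closes this circularity: the Hall--Witt identity is simply too weak an input here.

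The paper's argument supplies the missing ingredient from Condition~$\lco$. It takes the rank~$2$ residue $R$ containing $c_{k-2},c_{k-1},c_k$, rechooses the minimal gallery so that it enters $R$ at $d_i=\proj_R c_0$, and lets $\gamma,\beta_i$ be the two roots whose walls bound $R$ at $d_i$. If $\alpha_k\in\{\gamma,\beta_i\}$ one is done by induction on the shorter gallery $(d_0,\ldots,d_i)$. Otherwise Lemma~\ref{coUplus} (which needs $\lco$) gives $U_{\alpha_k}\leq [U_\gamma,U_{\beta_i}]$, so every $u_k$ is a word in elements of $U_\gamma$ and $U_{\beta_i}$. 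Now $[u_1,u_k]$ can be expanded using only commutators $[U_{\alpha_1},U_\gamma]$ and $[U_{\alpha_1},U_{\beta_i}]$, and for \emph{these} the inductive hypothesis applies because $\gamma$ and $\beta_i$ are crossed by the strictly shorter gallery ending at $d_i$. This is the idea you are missing: one must break up $U_{\alpha_k}$ itself, not $[u_2,u_k^{-1}]$, and the only tool available for that is Lemma~\ref{coUplus}.
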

\begin{proof}
	We can assume $\alpha_1 \subseteq \alpha_k$. In particular, $k \geq 3$ and $c_0, \ldots, c_k$ are not contained in a rank $2$ residue. Let $R$ be the residue containing $c_{k-2}, c_{k-1}$ and $c_k$. Then there exists a minimal gallery $(d_0 = c_0, \ldots, d_k = c_k)$ with $d_i = \proj_R c_0$ for some $0 \leq i \leq k$. We note that the set of roots crossed by $(d_0, \ldots, d_k)$ coincides with the set of roots crossed by $(c_0, \ldots, c_k)$. Let $(\beta_1, \ldots, \beta_k)$ be the set of roots crossed by $(d_0, \ldots, d_k)$ and let $1 \leq j \leq k$ be such that $\beta_j = \alpha_1$. Since $\alpha_1 \subseteq \alpha_k$, we have $j < i$. Let $\gamma \neq \beta_i$ be the root containing $d_i$ but not some neighbour of $d_i$ in $R$. Since $\{ \beta_{j+1}, \ldots, \beta_{i-1} \} \subseteq \{ \alpha_2, \ldots, \alpha_{k-1} \}$ it suffices to show $[U_{\alpha_1}, U_{\alpha_k}] \leq \langle U_{\beta_{j+1}}, \ldots, U_{\beta_{i-1}} \rangle$.	
	
	If $\alpha_k \in \{ \gamma, \beta_i \}$, then we have $[U_{\alpha_1}, U_{\alpha_k}] \leq \langle U_{\beta_{j+1}}, \ldots, U_{\beta_{i-1}} \rangle$ by induction. Thus we assume $\alpha_k \notin \{ \gamma, \beta_i \}$. By Lemma \ref{coUplus} and (RGD$2$) we deduce $U_{\alpha_k} \leq [U_{\gamma}, U_{\beta_i}]$. Then for each $u_k \in U_{\alpha_k}$ there exist $a_1, \ldots, a_n \in U_{\gamma}, b_1, \ldots, b_n \in U_{\beta_i}$ such that $u_k = a_1 b_1 \cdots a_n b_n$. Using induction, we know that $[U_{\alpha_1}, U_{\gamma}], [U_{\alpha_1}, U_{\beta_i}] \leq \langle U_{\beta_{j+1}}, \ldots, U_{\beta_{i-1}} \rangle$ as well as $[U_{\beta_l}, U_{\gamma}], [U_{\beta_l}, U_{\beta_i}] \leq \langle U_{\beta_{l+1}}, \ldots, U_{\beta_{i-1}} \rangle$ for every $j+1 \leq l \leq i-1$. Let $u_1 \in U_{\alpha_1}$. Using induction, we deduce
	\begin{align*}
		[u_1, u_k] = [u_1, a_1b_1 \cdots a_nb_n] = [u_1,b_n]  [u_1, a_1b_1 \cdots a_n]^{b_n} \in \langle U_{\beta_{j+1}}, \ldots, U_{\beta_{i-1}} \rangle
	\end{align*}
	Let $V := U_{\alpha_1} \cdots U_{\alpha_{k-1}}$. Using induction, $V \leq G$. Since $[U_{\alpha_i}, U_{\alpha_k}] \leq \langle U_{\alpha_{i+1}}, \ldots, U_{\alpha_{k-1}} \rangle \leq V$ for $1 \leq i \leq k-1$, we have $V U_{\alpha_k} = U_{\alpha_k} V$. It follows that $V U_{\alpha_k} = U_{\alpha_1} \cdots U_{\alpha_k}$ is a subgroup of $G$. In particular, the subgroup is nilpotent because of the commutator relations and the fact that the groups $U_{\delta}$ are nilpotent.
\end{proof}

\begin{lemma}\label{Embeddingofrootgroups}
	Let $(c_0, \ldots, c_k)$ be a minimal gallery and let $(\alpha_1, \ldots, \alpha_k)$ be the sequence of roots which are crossed by that minimal gallery. Then the product map $U_{\alpha_1} \times \cdots \times U_{\alpha_k} \to \langle U_{\alpha_i} \mid 1 \leq i \leq k \rangle$ is a bijection, if $\mathcal{D}_{\mathcal{F}}$ satisfies (RGD$3$).
\end{lemma}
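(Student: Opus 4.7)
Surjectivity is immediate from Lemma \ref{Uwnilpotentgroup}, which shows that $U_{\alpha_1} \cdots U_{\alpha_k}$ is already a subgroup of $G$ and hence equals $\langle U_{\alpha_i} \mid 1 \leq i \leq k \rangle$. The substance of the lemma is injectivity, which I would prove by induction on $k$, with the cases $k \leq 1$ trivial.

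For the inductive step, suppose $u_1 \cdots u_k = u_1' \cdots u_k'$ with $u_i, u_i' \in U_{\alpha_i}$. The element $z := (u_1')^{-1} u_1 \in U_{\alpha_1}$ can be rewritten as $u_2' \cdots u_k' u_k^{-1} \cdots u_2^{-1}$ and so lies in $\langle U_{\alpha_2}, \ldots, U_{\alpha_k}\rangle$. Once the identity
\begin{equation*}
U_{\alpha_1} \cap \langle U_{\alpha_2}, \ldots, U_{\alpha_k}\rangle = \{1\}
\end{equation*}
is known, we get $z = 1$, hence $u_1 = u_1'$, and applying the inductive hypothesis to the minimal sub-gallery $(c_1, \ldots, c_k)$ of length $k-1$ (whose sequence of crossed roots is $\alpha_2, \ldots, \alpha_k$) yields $u_i = u_i'$ for every $i \geq 2$.

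To prove the displayed intersection, I would conjugate by $m(u)$ for some $1 \neq u \in U_{\alpha_{s_1}}$. By (RGD$2$) this sends $U_{\alpha_1} = U_{\alpha_{s_1}}$ to $U_{-\alpha_{s_1}}$ and each $U_{\alpha_i}$ with $i \geq 2$ to $U_{s_1\alpha_i}$. Because $\alpha_i$ contains both $c_0 = 1_W$ and $c_1 = s_1$ for every $i \geq 2$, its image $s_1\alpha_i$ still contains $1_W$ and is therefore a positive root. Consequently the intersection conjugates into a subgroup of $U_{-\alpha_{s_1}} \cap U_+$, and the claim reduces to the identity $U_{-\alpha_s} \cap U_+ = \{1\}$ for every $s \in S$.

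The main obstacle is this last identity, because hypothesis (RGD$3$) supplies only the non-containment $U_{-\alpha_s} \not\leq U_+$. The plan for upgrading it to trivial intersection without yet having a twin building on which $G$ acts is to argue within each rank-$2$ subsystem $X_{s,t}$, where the stronger identity already holds since $X_{s,t}$ is a genuine RGD-system by construction, and then transfer the statement to $G$ via the amalgam presentation of $G$ together with the commutator relations of Lemma \ref{RGD24-1fornonnested} and Lemma \ref{Uwnilpotentgroup}. These relations will be used to normalize an arbitrary product in the group $U_+$ along a suitable long minimal gallery so that its possible intersection with $U_{-\alpha_s}$ can be controlled one rank-$2$ residue at a time, at which point the rank-$2$ statement applies.
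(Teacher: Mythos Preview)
Your overall structure matches the paper's proof exactly: surjectivity from Lemma~\ref{Uwnilpotentgroup}, injectivity by induction on $k$, and the inductive step reduced (after a conjugation using (RGD$2$)) to the identity $U_{-\alpha_s}\cap U_+=\{1\}$.

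Where you go astray is in calling that identity ``the main obstacle'' and sketching an elaborate rank-$2$ amalgam argument to establish it. In fact it follows in one line from (RGD$3$), and this is precisely what the paper does (citing \cite[Consequence~(1) on p.~414]{AB08}). Suppose $1\neq u\in U_{-\alpha_s}\cap U_+$. By (RGD$2$) applied inside $X_s$ there exist $u',u''\in U_{\alpha_s}$ with $m(u)=u'uu''$, and since $U_{\alpha_s}\leq U_+$ and $u\in U_+$ we get $m(u)\in U_+$. But conjugation by $m(u)$ sends $U_{\alpha_s}$ onto $U_{-\alpha_s}$, so $U_{-\alpha_s}=m(u)U_{\alpha_s}m(u)^{-1}\leq U_+$, contradicting (RGD$3$). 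Hence $U_{-\alpha_s}\cap U_+=\{1\}$.

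So your final paragraph is unnecessary; once you observe the one-line upgrade from non-containment to trivial intersection, the proof is complete. A minor point: you tacitly assume $c_0=1_W$ when you write $\alpha_1=\alpha_{s_1}$; the paper instead conjugates directly (via (RGD$2$)) so that $\alpha_1=-\alpha_s$ is negative and $\alpha_2,\ldots,\alpha_k$ are positive, which is the same manoeuvre packaged slightly differently.
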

\begin{proof}
	As in the definition of an RGD-system we let $U_+ = \langle U_{\alpha} \mid \alpha \in \Phi_+ \rangle$.	At first we assume that $U_{-\alpha_s} \cap U_+ \neq \{1\}$. Then \cite[Consequence $(1)$ on p.$414$]{AB08} would imply $U_{-\alpha_s} \leq U_+$, which is a contradiction to (RGD$3$). Thus $U_{-\alpha_s} \cap U_+ = \{1\}$.
	
	By the previous lemma we have $\langle U_{\alpha_1}, \ldots, U_{\alpha_k} \rangle = U_{\alpha_1} \cdots U_{\alpha_k}$ and hence the product map is surjective. We prove by induction on $k$ that it is injective. If $k=1$ there is nothing to show. Thus let $k \geq 2$ and let $u_i, v_i \in U_{\alpha_i}$ such that $u_1 \cdots u_k = v_1 \cdots v_k$. Then we have $ v_1^{-1} u_1 = v_2 \cdots v_k (u_2 \cdots u_k)^{-1}$. Using (RGD$2$) we can arrange it that $\alpha_1 = -\alpha_s \in \Phi_-$ for some $s\in S$ and $\alpha_i \in \Phi_+$ for every $2 \leq i \leq k$. Since $U_{-\alpha_s} \cap U_+ = 1$ we obtain $u_1 = v_1$ and $u_2 \cdots u_k = v_2 \cdots u_k$. Using induction the claim follows.
\end{proof}

\begin{theorem}\label{Theorem: Moufang foundation RGD0+3 implies RGD-system}
	If $\mathcal{D}_{\mathcal{F}}$ satisfies (RGD$0$) and (RGD$3$), then $\mathcal{D}_{\mathcal{F}}$ is an RGD-system.
\end{theorem}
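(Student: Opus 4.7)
The strategy is to verify each RGD axiom for $\mathcal{D}_{\mathcal{F}}$ in turn. (RGD$0$) and (RGD$3$) are in the hypotheses, and Lemma \ref{RGD24-1fornonnested} already supplies (RGD$2$), (RGD$4$), and (RGD$1$) for every prenilpotent pair $\{\alpha,\beta\}$ with $o(r_\alpha r_\beta) < \infty$. The only remaining case is (RGD$1$) for nested prenilpotent pairs, which by symmetry we may write as $\alpha \subsetneq \beta$.

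Fix such a pair. The roots in $[\alpha,\beta]$ form a finite linearly ordered chain of parallel walls inside the infinite dihedral subgroup $\langle r_\alpha, r_\beta\rangle$, so one can select a minimal gallery $(c_0, \ldots, c_k)$ in $\Sigma(W,S)$ whose ordered sequence of crossed roots $(\alpha_1, \ldots, \alpha_k)$ satisfies $\alpha_1 = \alpha$, $\alpha_k = \beta$, and $\{\alpha_1, \ldots, \alpha_k\} = [\alpha,\beta]$; concretely, pick $c_0 \in \alpha$ adjacent to $\partial\alpha$ and $c_k$ on the opposite side of every wall in $[\alpha,\beta]$. Applying Lemma \ref{Uwnilpotentgroup} to this gallery gives $[U_\alpha, U_\beta] = [U_{\alpha_1}, U_{\alpha_k}] \leq \langle U_{\alpha_2}, \ldots, U_{\alpha_k}\rangle$.

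To finish, I would sharpen this bound to $\langle U_{\alpha_2}, \ldots, U_{\alpha_{k-1}}\rangle$, which equals $U_{(\alpha,\beta)}$ by the choice of gallery, via a careful reading of the inductive argument in the proof of Lemma \ref{Uwnilpotentgroup}: after passing to the deep gallery $(d_0, \ldots, d_k)$ with $d_i = \proj_R c_0$ for the rank $2$ residue $R$ containing the last three chambers, and invoking Lemma \ref{coUplus} to express $U_{\alpha_k}$ as a product of commutators of the two root groups in $R$, the induction lands $[U_{\alpha_1}, U_{\alpha_k}]$ inside $\langle U_{\beta_{j+1}}, \ldots, U_{\beta_{i-1}}\rangle$, whose roots all lie strictly between $\alpha$ and $\beta$ and hence in $(\alpha,\beta)$. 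The main obstacle is precisely this refinement---excising the $U_{\alpha_k}$-factor from the conclusion of Lemma \ref{Uwnilpotentgroup}; should the inductive reading prove unyielding, a backup is to invoke the unique factorisation in Lemma \ref{Embeddingofrootgroups} (which consumes (RGD$3$)) to write the commutator uniquely as $u_2 \cdots u_k$ with $u_i \in U_{\alpha_i}$, and then force $u_k \in U_\beta$ to be trivial by a local computation inside $R$.
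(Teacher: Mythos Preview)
Your reduction to the nested case is correct, and your backup idea of invoking Lemma \ref{Embeddingofrootgroups} is exactly how the paper begins. But the central claim of your main strategy is false: for nested $\alpha \subsetneq \beta$ one \emph{cannot} in general choose a minimal gallery whose crossed roots are precisely $[\alpha,\beta]$. The set $[\alpha,\beta]$ consists of all roots $\gamma$ with $\alpha\subseteq\gamma\subseteq\beta$, and while these have pairwise parallel walls, any minimal gallery from a chamber on the $\alpha$-side of $\partial\alpha$ to a chamber on the $(-\beta)$-side of $\partial\beta$ must in general cross additional walls that meet the strip $\beta\setminus\alpha$ transversally. A concrete obstruction already occurs in $\tilde{A}_2$: if $\partial\alpha$ and $\partial\beta$ are adjacent parallel walls then $[\alpha,\beta]=\{\alpha,\beta\}$, yet every minimal gallery across the strip has length at least $3$ and crosses a wall from one of the other two families. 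Consequently the equality $\langle U_{\alpha_2},\ldots,U_{\alpha_{k-1}}\rangle = U_{(\alpha,\beta)}$ that you want simply fails for any gallery you can actually write down. The same objection applies to your reading of the proof of Lemma \ref{Uwnilpotentgroup}: the roots $\beta_{j+1},\ldots,\beta_{i-1}$ appearing there are just the roots crossed by a particular gallery before it enters the rank $2$ residue $R$; nothing forces them to lie in $(\alpha,\beta)$.

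Your backup sketch therefore does not close the gap either: forcing the single factor $u_k\in U_\beta$ to vanish is easy (and indeed $I\subseteq\{2,\ldots,k-1\}$ from the outset by applying Lemma \ref{Uwnilpotentgroup} to the reversed gallery), but the real difficulty is to eliminate all factors $u_{\alpha_i}$ for which $\alpha_i\notin(\alpha,\beta)$. The paper handles this by a wall--walking argument: using Lemma \ref{CM06Prop2.7} one slides the terminal panel along $\partial\beta$, and by comparing unique factorisations (Lemma \ref{Embeddingofrootgroups}) at each step one shows that every $\alpha_i$ with $i\in I$ is crossed by \emph{every} minimal gallery from $c_0$ to \emph{every} panel of $\partial\beta$. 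If some $\alpha_i$ had $o(r_\beta r_{\alpha_i})<\infty$ one could choose a panel of $\partial\beta$ inside $\alpha_i$, contradicting this; hence $\alpha_i\subsetneq\beta$, and symmetrically $\alpha\subsetneq\alpha_i$. This global argument, not a local computation in a single rank $2$ residue, is what is missing from your proposal.
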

\begin{proof}
	By assumption and Lemma \ref{RGD24-1fornonnested} it suffices to show (RGD$2$) for all roots $\alpha \subsetneq \beta \in \Phi$. Let $\alpha \subsetneq \beta$, let $G = (c_0, \ldots, c_k)$ be a minimal gallery and let $(\alpha_1 = \alpha, \ldots, \alpha_k = \beta)$ be the sequence of roots which are crossed by $G$. Then $P := \{ c_{k-1}, c_k \} \in \partial \beta$. Using Lemma \ref{Embeddingofrootgroups} we obtain a unique set $I \subseteq \{ 2, \ldots, k-1 \}$ and unique elements $1 \neq u_{\alpha_i} \in U_{\alpha_i}$ such that $[u_{\alpha}, u_{\beta}] = \prod_{i\in I} u_{\alpha_i}$. Let $Q \in \partial \beta$. Using Lemma \ref{CM06Prop2.7} there exist a sequence $P_0 = P, \ldots, P_n = Q$ of panels in $\partial \beta$ and a sequence $R_1, \ldots, R_n$ of spherical rank $2$ residues in $\partial^2 \beta$ such that $P_{i-1}, P_i$ are distinct and contained in $R_i$. Let $(d_0 = c_0, \ldots, d_m)$ be a minimal gallery such that $Q = \{ d_{m-1}, d_m \}$. We will show by induction on $n$ that for every $i\in I$ the root $\alpha_i$ is crossed by the gallery $(d_0, \ldots, d_m)$. For $n=0$ there is nothing to show, as two minimal galleries from one chamber to another chamber cross the same roots. Thus we assume $n>0$. Note that by \cite[Lemma $3.69$]{AB08} every minimal gallery from $c_0$ to a chamber which is not contained in $\beta$ has to cross $\partial \alpha$ and $\partial \beta$. Let $(e_0 = c_0, \ldots, e_l)$ be a minimal gallery such that $P_{n-1} = \{ e_{l-1}, e_l \}$ and $e_z = \proj_{R_n} e_0$ for some $z$. Moreover, we let $(\beta_1, \ldots, \beta_l = \beta)$ be the sequence of roots which are crossed by $(e_0, \ldots, e_l)$. Using induction $\alpha_i$ is crossed by $(e_0, \ldots, e_l)$ for every $i\in I$.
	
	Using Lemma \ref{Embeddingofrootgroups} we obtain a unique set $J \subseteq \{ 2, \ldots, l-1 \}$ and unique elements $1 \neq v_{\beta_j} \in U_{\beta_j}$ such that $[u_{\alpha}, u_{\beta}] = \prod_{j\in J} v_{\beta_j}$. Note that $\alpha \neq \beta_1$ in general. Similarly, if $(\delta_1, \ldots, \delta_m = \beta)$ is the sequence of roots crossed by $(e_0, \ldots, e_z = \proj_{R_n} e_0, \ldots, d_{m-1}, d_m)$, there exist a unique set $F \subseteq \{ 2, \ldots, m-1 \}$ and unique elements $1 \neq x_{\delta_f} \in U_{\delta_f}$ such that $[u_{\alpha}, u_{\beta}] = \prod_{f\in F} x_{\delta_f}$. Extending both galleries to $p$, where $e_z, p$ are opposite in $R_n$, Lemma \ref{Embeddingofrootgroups} implies that $J, F \subseteq \{ 2,\ldots, z-1 \}$, as $\langle U_{\beta_x} \mid z < x < l \rangle \cap \langle U_{\delta_x} \mid z < x < m \rangle = \{1\}$.
	
	We have $\prod_{i\in I} u_{\alpha_i} = \prod_{j\in J} v_{\beta_j}$. Assume $\beta_{l-1} \in \{ \alpha_i \mid i\in I \}$. Then $u_{\beta_{l-1}} \in \langle U_{\beta_j} \mid 2 \leq j \leq l-2 \rangle$ and the previous lemma yields a contradiction. Thus $\beta_{l-1} \notin \{ \alpha_i \mid i\in I \}$ and $\alpha_i$ is crossed by the gallery $(e_0, \ldots, e_{l-1})$ for every $i\in I$. Repeating that argument we deduce that for $i\in I$ the root $\alpha_i$ is crossed by $(e_0, \ldots, e_z)$ and hence by $(e_0, \ldots, e_z, \ldots, d_m)$. Using induction again, we deduce that $\alpha_i$ is crossed by $(d_0, \ldots, d_m)$ for every $i\in I$ and the claim follows.
	
	Now assume that for $i\in I$ we have $o(r_{\beta} r_{\alpha_i}) < \infty$. Then there exists $R \in \partial^2 \beta \cap \partial^2 \alpha_i$. Suppose $Q \in \partial \beta$ such that $Q \subseteq \alpha_i \cap R$ and let $\proj_Q c_0 \neq q \in Q$. Then a minimal gallery from $c_0$ to $q$ does not cross the root $\alpha_i$. This yields a contradiction and we infer $\alpha_i \subsetneq \beta$ for every $i\in I$. Using similar arguments, we deduce $\alpha \subsetneq \alpha_i$ for every $i\in I$. Since $\gamma \in (\alpha, \beta)$ if and only if $\alpha \subsetneq \gamma \subsetneq \beta$, the claim follows.
\end{proof}

\begin{comment}

For $J \subseteq S$ we define $G_J$ to be the direct limit of the groups $X_s$ and $X_{s, t}$ with the inclusions as above, where $s \neq t \in J$. It follows directly from the definition of the direct limit that we have a homomorphism $G_J \to G$ extending $X_s, X_{s, t} \to G$. We define $(\mathcal{D}_{\mathcal{F}})_J = (G_J, (U_{\alpha})_{\alpha \in \Phi^J})$ and note that we have $\mathcal{D}_{(\mathcal{F}_J)} \neq (\mathcal{D}_{\mathcal{F}})_J$ in general, as the group $X_s$ depends on $\mathcal{F}$.

\begin{corollary}\label{Corollary: spherical RGD-system}
	Let $J \subseteq S$ be spherical and assume that $U_{\pm s} \to G_J$ is injective for every $s\in J$. Then $(\mathcal{D}_{\mathcal{F}})_J$ is an RGD-system of type $(\langle J \rangle, J)$.
\end{corollary}
\begin{proof}
	By the hypothesis and the previous theorem it suffices to show (RGD$3$). Assume $U_{-s} \leq U_+^J := \langle U_{\alpha} \mid \alpha \in \Phi_+^J \rangle$. Since $J$ is spherical, we have $U_+^J = U_{r_J}$. But $U_{r_J}$ is nilpotent by Lemma \ref{Uwnilpotentgroup}. As $\langle U_s \cup U_{-s} \rangle$ is not nilpotent by \cite[Remark $7.107(b)$]{AB08} but contained in $U_{r_J}$, we deduce a contradiction and hence $U_{-s} \not\leq U_+^J$.
\end{proof}

\end{comment}

\begin{corollary}\label{inclusionsinjectivergd3}
	If the canonical mappings $U_{\pm s} \to G$ are injective and if $\mathcal{D}_{\mathcal{F}}$ satisfies (RGD$3$), then $\mathcal{D}_{\mathcal{F}}$ is an RGD-system and we have $\mathcal{F} \cong \mathcal{F}(\Delta(\mathcal{D}_{\mathcal{F}}), B_+)$.
\end{corollary}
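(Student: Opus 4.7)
The strategy is to verify the hypotheses of Theorem \ref{Theorem: Moufang foundation RGD0+3 implies RGD-system} and then read off the isomorphism of foundations from Lemma \ref{Lemma: RGD-system residue isomorphism}.

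First I would verify (RGD$0$). Since every $\Delta_J$ is a thick Moufang building, each root group $U_{\pm\alpha_s}^J$ is non-trivial, and hence so is $U_{\pm s}$. The assumed injectivity of $U_{\pm s} \to G$ then gives $U_{\pm\alpha_s} \neq \{1\}$ in $G$ for every $s\in S$; by (RGD$2$) (which holds by Lemma \ref{RGD24-1fornonnested}) and the fact that the remaining $U_\alpha$ are defined as conjugates of the $U_{\pm s}$ inside $G$, we conclude $U_\alpha \neq \{1\}$ for every $\alpha \in \Phi$. Combined with (RGD$3$) (which is assumed) and Lemma \ref{RGD24-1fornonnested}, Theorem \ref{Theorem: Moufang foundation RGD0+3 implies RGD-system} yields that $\mathcal{D}_{\mathcal{F}}$ is an RGD-system, and by \cite[Theorem $8.80$]{AB08} it has an associated twin building $\Delta(\mathcal{D}_{\mathcal{F}}) = (\Delta_+, \Delta_-, \delta_*)$ with $\Delta_+ = G/B_+$.

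Next I would construct the isomorphism of foundations. Put $\mathcal{F}(\Delta(\mathcal{D}_{\mathcal{F}}), B_+) = ((\Delta_J')_{J\in E(S)}, (B_+)_{J\in E(S)}, (\id)_{\{r,s\},\{s,t\}\in E(S)})$ with $\Delta_J' = (R_J(B_+), \delta_+\vert_{R_J(B_+)})$. For each $\{s,t\}\in E(S)$, Lemma \ref{Lemma: RGD-system residue isomorphism} gives a canonical isomorphism $X_{s,t}/B_{\{s,t\}} \to R_{\{s,t\}}(B_+)$, $gB_{\{s,t\}} \mapsto gB_+$. On the other hand, by construction $X_{s,t}$ is a quotient of the Steinberg group $\hat{H}_J$ of the RGD-system $(H_J, (U_\alpha^J)_{\alpha \in \Phi^J})$ of $\Delta_J$ by a central subgroup (contained in $Z(\hat{H}_J)$ by \cite[Proposition $7.127(2)$]{AB08}). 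Hence the action of $\hat{H}_J$ on $\Delta_J$ factors through $X_{s,t}$, and the stabilizer of $c_J$ is precisely the image of the positive Borel subgroup of $\hat H_J$ in $X_{s,t}$, which coincides with $B_{\{s,t\}}$ by construction. This gives an isomorphism $\alpha_J\colon \Delta_J \to \Delta_J'$ sending $c_J \mapsto B_+$ and transporting $U_{\pm s}$ to $U_{\pm \alpha_s}$ acting on the $s$-panel at $B_+$.

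Finally I would check compatibility of the glueings, i.e.\ $\id \circ \alpha_{\{r,s\}} = \alpha_{\{s,t\}} \circ \phi_{rst}$ on the $s$-panel at $c_{\{r,s\}}$. This is where the whole definition of $U_{\pm s}$ pays off: by the discussion preceding Lemma \ref{Lemma: X_s RGD-system}, the groups $U_{\pm s}$ were defined intrinsically, independently of the choice of $J\ni s$, and the canonical isomorphisms $U_{\pm s} \to U_{\pm \alpha_s}^J\vert_{\P_s(c_J)}$ are, by (MF$2$), precisely conjugation by $\phi_{s_0st}$. Since both $\alpha_{\{r,s\}}$ and $\alpha_{\{s,t\}}$ identify these panels Moufang-set equivariantly with $\P_s(B_+)$, the $\phi_{rst}$ are absorbed into the identity map on $\P_s(B_+)$. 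Combining these $\alpha_J$ yields the desired isomorphism of foundations. The main technical point, and the one I expect to require the most care, is the identification of the stabilizer of $c_J$ in $X_{s,t}$ with $B_{\{s,t\}}$, because it requires a careful comparison between the Borel of the RGD-system of $\Delta_J$ (which acts on $\Delta_J$) and the RGD-Borel of $X_{s,t}$ (which acts on $R_{\{s,t\}}(B_+)$), modulo the central kernels built into the construction.
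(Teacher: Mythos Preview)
Your proposal is correct and follows essentially the same route as the paper: first invoke Theorem~\ref{Theorem: Moufang foundation RGD0+3 implies RGD-system} (your explicit check of (RGD$0$) is a welcome elaboration, since the paper leaves it implicit), then build each $\alpha_J$ by composing two identifications of $X_{s,t}/B_{\{s,t\}}$, one with $R_{\{s,t\}}(B_+)$ via Lemma~\ref{Lemma: RGD-system residue isomorphism} and one with $\Delta_{\{s,t\}}$, and finally check the glueing compatibility.

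Two small differences worth noting. First, the ``technical point'' you flag---identifying the stabiliser of $c_J$ in $X_{s,t}$ with $B_{\{s,t\}}$---is handled in the paper simply by citing \cite[Lemma~$7.28$]{AB08}, together with Remark~\ref{Remark: Homomorphism between RGD-systems} to ensure the central kernel of $X_{s,t}\to G$ lies in $B_{\{s,t\}}$; you do not need to unwind the Steinberg construction by hand. Second, the paper verifies the glueing compatibility by a concrete element-wise computation: write $d=g(c_{\{r,s\}})$ with $g=g_1\cdots g_n$, $g_i\in U_{\pm\alpha_s}^{\{r,s\}}$, push each $g_i$ through $\phi_{rst}$ to $g_i'\in U_{\pm\alpha_s}^{\{s,t\}}$, and then observe that $g_i'=g_i$ in $G$ by the very definition of $U_{\pm s}$. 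This makes explicit what you summarise as ``$\phi_{rst}$ is absorbed into the identity on $\P_s(B_+)$'', and is probably the cleanest way to write it down.
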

\begin{proof}
	By Theorem \ref{Theorem: Moufang foundation RGD0+3 implies RGD-system} $\mathcal{D}_{\mathcal{F}}$ is an RGD-system. Thus it suffices to show that $\mathcal{F} \cong \mathcal{F}(\Delta(\mathcal{D}_{\mathcal{F}}), B_+)$. Since $U_{\pm s} \to G$ are injective, we do not distinguish between them and their images in $G$.
	
	Each $\Delta_{\{s, t\}}$ is a spherical Moufang building for $s \neq t$ with $\{s, t\} \in E(S)$. Thus there exists an isomorphism $\beta_{\{s, t\}}: X_{s, t} /B_{\{s, t\}} \to \Delta_{\{s, t\}}, gB_{\{s, t\}} \mapsto g(c_{\{s, t\}})$ by \cite[Lemma $7.28$]{AB08}, where $B_{\{s, t\}} := \langle H_s \cup H_t \cup U_{\alpha} \mid \alpha \in \Phi_+^{\{s, t\}} \rangle$.
	
	By Remark \ref{Remark: Homomorphism between RGD-systems} and Lemma \ref{Lemma: RGD-system residue isomorphism}, we know that $\alpha_{\{s, t\}}: X_{s, t}/B_{\{s, t\}} \to R_{\{s, t\}}(B_+), gB_{\{s, t\}} \to gB_+$ is an isomorphism for every $\{s, t\} \in E(S)$. Thus we have an isomorphism $\gamma_{\{s, t\}} := \alpha_{\{s, t\}} \circ \beta_{\{s, t\}}^{-1}: \Delta_{\{s, t\}} \to R_{\{s, t\}}(B_+), d=g(c_{\{s, t\}}) \mapsto gB_+$.
	
	It remains to show that $\gamma_{\{r, s\}}(d) = \left( \gamma_{\{s, t\}} \circ \phi_{rst} \right)(d)$ for every $d\in \P_s(c_{\{r,s\}})$. Let $d = g(c_{\{r, s\}})$ for some $g\in \langle U_{\alpha_s}^{\{r, s\}} \cup U_{-\alpha_s}^{\{r, s\}} \rangle$. Let $g_i \in U_{\pm \alpha_s}^{\{r, s\}}$ such that $g = g_1 \cdots g_n$ and let $g_i' \in U_{\pm \alpha_s}^{\{s, t\}}$ be the unique element such that $g_i' \vert_{\P_s(c_{\{s, t\}})} = \phi_{rst} \circ g_i \vert_{\P_s(c_{\{r, s\}})} \circ \phi_{rst}^{-1}$. Let $g' := g_1 \dots g_n'$. Then
	\begin{align*}
		\phi_{rst}(d) = \phi_{rst}( g(c_{\{r, s\}}) ) = \phi_{rst} \left( g \vert_{\P_s(c_{\{r, s\}})} \left( \phi_{rst}^{-1} (c_{\{s, t\}}) \right) \right) = g_1' \cdots g_n' \vert_{\P_s(c_{\{s, t\}})} (c_{\{s, t\}}) = g'(c_{\{s, t\}})
	\end{align*}
	Since $g_i' = g_i$ in $G$, we deduce $g' = g$ in $G$. As $\gamma_{\{r, s\}}(d) = \gamma_{\{r, s\}}(g(c_{\{r, s\}})) = gB_+$, we have
	\allowdisplaybreaks
	\begin{align*}
		\left( \gamma_{\{s, t\}} \circ \phi_{rst} \right)(d) &= \gamma_{\{s, t\}} \left( \phi_{rst}(d) \right) = \gamma_{\{s, t\}} \left( g' (c_{\{s, t\}}) \right) = g' B_+ = gB_+ = \gamma_{\{r, s\}}(d) \qedhere
	\end{align*}
\end{proof}

For $J \subseteq S$ we define $G_J$ to be the direct limit of the groups $X_s$ and $X_{s, t}$ with the inclusions as above, where $s \neq t \in J$. It follows directly from the definition of the direct limit that we have a homomorphism $G_J \to G$ extending $X_s, X_{s, t} \to G$. We define $(\mathcal{D}_{\mathcal{F}})_J = (G_J, (U_{\alpha})_{\alpha \in \Phi^J})$ and note that we have $\mathcal{D}_{(\mathcal{F}_J)} \neq (\mathcal{D}_{\mathcal{F}})_J$ in general, as the group $X_s$ depends on $\mathcal{F}$.

\begin{lemma}\label{Lemma: integrable implies RGD-system}
	Let $\mathcal{F}$ be an irreducible Moufang foundation of type $(W, S)$ satisfying Condition $\lco$. If $J \subseteq S$ is irreducible such that $\vert J \vert \geq 3$ and $\mathcal{F}_J$ is integrable, then the following hold:
	\begin{enumerate}[label=(\alph*)]
		\item Let $\Delta$ be a twin building of type $(W, S)$ and let $c$ be a chamber of $\Delta$ such that $\mathcal{F} \cong \mathcal{F}(\Delta, c)$. Then we have a canonical homomorphism $G_J \to \Aut(\Delta)$
		
		\item The homomorphisms $U_{\pm s} \to G_J$ are injective for every $s\in J$ and $(\mathcal{D}_{\mathcal{F}})_J$ is an RGD-system.
	\end{enumerate}
\end{lemma}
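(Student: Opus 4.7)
For part (a), my plan is to map each constituent of the inductive system defining $G_J$ into $\Aut(\Delta)$ and then appeal to the universal property of the direct limit. Using $\mathcal{F} \cong \mathcal{F}(\Delta, c)$, I first note that $\Delta$ is thick by Lemma \ref{E1thick}, and combine this with \cite[Theorem $1.5$]{MR95} and \cite[Theorem $8.27$]{AB08} to conclude that $\Delta$ is a Moufang twin building. I will fix a twin apartment through $c$, identify its roots with $\Phi$, and let $U_\alpha \leq \Aut(\Delta)$ denote the root groups. The Moufang structure of the rank-$2$ residues canonically identifies $U_{\pm s}$ with $U_{\pm \alpha_s}$, giving a homomorphism $U_s \star U_{-s} \to \Aut(\Delta)$; by the argument of Example \ref{Example:Xs injective} its kernel is exactly $\ker \pi_s$, so it descends to an injection $X_s \hookrightarrow \Aut(\Delta)$. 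For $\{s,t\} \in E(J)$ with $m_{st} \geq 3$, I will use the universal property of the Steinberg group to obtain $\hat{H}_{\{s,t\}} \to \Aut(\Delta)$; the subgroup $K_{st} = \phi_s(\ker \pi_s)\phi_t(\ker \pi_t)$ is killed by the previous step, giving $X_{s,t} \to \Aut(\Delta)$. For $m_{st} = 2$, the commuting images of $X_s$ and $X_t$ assemble to $X_s \times X_t \to \Aut(\Delta)$. Compatibility with the inclusions $X_s \hookrightarrow X_{s,t}$ will then yield the canonical $G_J \to \Aut(\Delta)$ via the universal property of the direct limit.

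For part (b), I plan to use the integrability of $\mathcal{F}_J$ to fix a twin building $\Delta^J$ of type $(\langle J\rangle, J)$ and a chamber $c^J$ with $\mathcal{F}_J \cong \mathcal{F}(\Delta^J, c^J)$; it is irreducible by hypothesis on $J$ and inherits Condition $\lco$ from $\mathcal{F}$. Applying the construction of part (a) to $\mathcal{F}_J$ produces a canonical homomorphism $G(\mathcal{F}_J) \to \Aut(\Delta^J)$, where $G(\mathcal{F}_J)$ is the direct limit built from $\mathcal{F}_J$ alone. The inclusion $E(J) \subseteq E(S)$ gives $\ker \pi_s \leq \ker \pi_s^J$ and hence natural surjections $X_s \twoheadrightarrow X_s^J$ and $X_{s,t} \twoheadrightarrow X_{s,t}^J$ for $s,t \in J$; these fit together into $G_J \twoheadrightarrow G(\mathcal{F}_J)$, which composed with the previous map yields $G_J \to \Aut(\Delta^J)$. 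Since the composite $U_{\pm s} \to G_J \to \Aut(\Delta^J)$ is the canonical injective embedding, I conclude that $U_{\pm s} \to G_J$ is injective.

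To see that $(\mathcal{D}_\mathcal{F})_J$ is an RGD-system I will invoke Theorem \ref{Theorem: Moufang foundation RGD0+3 implies RGD-system}: its proof, together with Lemmas \ref{RGD24-1fornonnested}--\ref{Embeddingofrootgroups}, depends only on the rank-$2$ RGD-structure of the groups $X_{s,t}$ for $s,t \in J$ (which is automatic here) and on (RGD$0$), (RGD$3$). Condition (RGD$0$) follows from the nontriviality of $U_{\pm s}$ and the injectivity of $U_{\pm s} \to G_J$ just proved. For (RGD$3$), if some nontrivial element of $U_{-\alpha_s}$ were in $U_+^J$ inside $G_J$, it would remain nontrivial in $\Aut(\Delta^J)$ by the injectivity above, and its image would lie in $U_+^J$ inside $\Aut(\Delta^J)$, contradicting (RGD$3$) for the RGD-system $(\Aut(\Delta^J), (U_\alpha)_{\alpha \in \Phi^J})$.

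The main obstacle I anticipate is handling the two-level structure: the groups $X_s$ and $X_{s,t}$ appearing in $(\mathcal{D}_\mathcal{F})_J$ are built from the full foundation $\mathcal{F}$, whereas the twin building I have access to is only $\Delta^J$ coming from $\mathcal{F}_J$. I will therefore have to trace the comparison $X_s \twoheadrightarrow X_s^J$ (and its $X_{s,t}$-analogue) carefully, and verify that the identification of kernels via Example \ref{Example:Xs injective} is legitimate when transplanted to $\Delta^J$ in place of the twin building of the full $\mathcal{F}$ (which need not even exist in part (b)).
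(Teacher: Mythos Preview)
Your proposal is correct and follows essentially the same route as the paper. Two small points of comparison: first, where you factor part~(b) as $G_J \twoheadrightarrow G(\mathcal{F}_J) \to \Aut(\Delta^J)$, the paper goes directly from $G_J$ to $\Aut(\Delta^J)$ by observing (via the argument of Example~\ref{Example:Xs injective}, applied to $\Delta^J$) that $K_s \leq K_s^J$ already acts trivially on $\Delta^J$---your intermediate surjection encodes precisely this inclusion of kernels, so the two arguments are equivalent. Second, when you ``fix a twin apartment through $c$'' you should make sure it contains the images of the foundation apartments $(\Sigma_K)_{K\in E(J)}$; the paper invokes Lemma~\ref{Lemma: Apartment of foundation is contained in Apartment of twin building} for exactly this purpose, and without it the identification of $U_{\pm s}$ with $U_{\pm\alpha_s}$ in $\Aut(\Delta)$ is not canonical. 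Your verification of (RGD$3$) via the ambient RGD-system on $\Aut(\Delta^J)$ is the same as the paper's argument that $U_+^J$ fixes $c$ while $U_{-\alpha_s}$ does not, phrased slightly differently.
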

\begin{proof}
	Since $\mathcal{F}_J$ is integrable, there exists a thick twin building $\Delta$ of type $(W, S)$ and a chamber $c$ of $\Delta$ such that $\mathcal{F}_J \cong \mathcal{F}(\Delta, c)$. Using \cite[Theorem $1.5$]{MR95} and \cite[Theorem $8.27$]{AB08}, $\Delta$ is a so-called \textit{Moufang twin building}. Let $\epsilon \in \{+, -\}$ be such that $c\in \C_{\epsilon}$. By Lemma \ref{Lemma: Apartment of foundation is contained in Apartment of twin building} there exists a twin apartment $\Sigma$ containing the image of the apartments $\Sigma_J$ of the foundation. As we have seen in Example \ref{Example:Xs injective}, $K_s$ acts trivial $\Delta$ and we have homomorphism $X_s \to \Aut(\Delta)$. Clearly, $U_{\pm s} \to \Aut(\Delta)$ are injective. Let $\{ \alpha, \beta \}  \subseteq \Phi^{\{s, t\}}$ be a prenilpotent pair. Note that restriction of an automorphism of $\Delta$ to $R_{\{s, t\}}(c)$ is an epimorphism. Using \cite[Corollary $7.66$]{AB08} it is an isomorphism from $U_{[\alpha, \beta]}$ to its image in $\Aut(R_{\{s, t\}}(c))$. Thus we have a homomorphism $\hat{H}_{\{s, t\}} \to \Aut(\Delta)$ for $m_{st} >2$. As $K_s$ is contained in the kernel of this map, we obtain a homomorphism $X_{s, t} = \hat{H}_{\{s, t\}}/K_{st} \to \Aut(\Delta)$ for $m_{st}>2$. As $U_{\pm \alpha_s}$ commutes with $U_{\pm \alpha_t}$ in $\Aut(\Delta)$ if $m_{st} = 2$, we also have a homomorphism $X_{s, t} = X_s \times X_t \to \Aut(\Delta)$. We conclude that there exists a homomorphism $G_J \to \Aut(\Delta)$ mapping $U_{\pm s}$ onto $U_{\pm \alpha_s}$. Note that $U_{\alpha} \leq \langle U_{\alpha_s} \mid s\in S \rangle$ fixes $c$ for every $\alpha \in \Phi_+^J$, whereas $U_{-\alpha_s}$ does not. Thus (RGD$3$) holds and the claim follows from Theorem \ref{Theorem: Moufang foundation RGD0+3 implies RGD-system}.
\end{proof}

\begin{lemma}\label{Lemma: reducible implies RGD-system}
	Let $\mathcal{F}$ be a Moufang foundation and let $J \subseteq S$ be reducible such that $\vert J \vert = 3$. Then $U_{\pm s} \to G_J$ are injective and $(\mathcal{D}_{\mathcal{F}})_J$ is an RGD-system.
\end{lemma}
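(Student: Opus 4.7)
The plan is to identify $G_J$ explicitly as a direct product and to transport the RGD-axioms from the factors. Write $J = \{r, s, t\}$. Since $J$ is reducible, either (a) the diagram on $J$ is totally disconnected, or (b) exactly one edge is present; in the latter case we may assume $\{r,s\} \in E(S)$, so $m_{rt} = m_{st} = 2$.

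\emph{Identifying $G_J$.} In case (a) all three pair-groups $X_{u,v}$ are defined as $X_u \times X_v$, and a direct verification of the universal property of the direct limit shows that $G_J \cong X_r \times X_s \times X_t$ with the canonical inclusions. In case (b), compatibility with the maps $X_r \to X_{r,t} = X_r \times X_t$ and $X_s \to X_{s,t} = X_s \times X_t$ forces the image of $X_{r,s}$ in any target of compatible homomorphisms to commute with the image of $X_t$; another universal-property check then gives $G_J \cong X_{r,s} \times X_t$.

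\emph{Injectivity of $U_{\pm u} \to G_J$.} In case (a) this is immediate from Lemma \ref{Lemma: X_s RGD-system} combined with the injectivity of the inclusion $X_u \hookrightarrow X_r \times X_s \times X_t$. In case (b) the statement for $u = t$ follows by the same reasoning via inclusion into the second factor. For $u \in \{r, s\}$, one factors the map as $U_{\pm u} \cong U_{\pm \alpha_u}^{\{r,s\}} \to \hat U_{\pm \alpha_u} \to \hat U_{\pm \alpha_u} K_{rs}/K_{rs} \hookrightarrow X_{r,s} \hookrightarrow G_J$. The first identification uses (MF$2$), the second arrow is injective by the definition of the Steinberg group, and the third is injective because $K_{rs} \subseteq \ker \pi_{\{r,s\}}^{\mathrm{St}}$ while $\hat U_{\pm \alpha_u}$ maps injectively to $\Aut(\Delta_{\{r,s\}})$, since root groups of a Moufang building act faithfully on their panel.

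\emph{RGD-axioms.} Each factor of $G_J$ is a spherical RGD-system: the rank-$1$ systems $X_u$ by Lemma \ref{Lemma: X_s RGD-system}, together with the rank-$2$ system $X_{r,s}$ (constructed just before Lemma \ref{Lemma: RGD-system residue isomorphism}) in case (b). Under these identifications the root groups $U_\alpha$ of $\mathcal{D}_{\mathcal{F}}$ for $\alpha \in \Phi^J$ coincide with the root groups of the corresponding factor, so (RGD$0$), (RGD$2$), and (RGD$4$) are inherited factor-wise; note that for (RGD$2$) the element $m(u)$ for $u \in U_{\alpha_v} \setminus \{1\}$ chosen inside the factor containing $\alpha_v$ acts trivially on roots from the other factor, as required. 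For (RGD$1$), a prenilpotent pair $\{\alpha, \beta\}$ lying in a single factor is handled there, while a pair with roots in distinct factors has $r_\alpha$ and $r_\beta$ commuting in $\langle J \rangle$, hence $[\alpha,\beta] = \{\alpha,\beta\}$ and $U_{(\alpha,\beta)} = 1$, and $[U_\alpha, U_\beta] = 1$ because distinct factors commute in the direct product. Finally, (RGD$3$) is obtained by projecting $U_{-\alpha_u}$ onto its factor and invoking (RGD$3$) there.

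The only care needed is keeping track of the two diagram shapes on $J$ and of the identification of $G_J$ with a direct product; no deeper obstacle arises.
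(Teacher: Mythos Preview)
Your proof is correct and follows the same approach as the paper: identify $G_J$ as a direct product ($X_r \times X_s \times X_t$ in the totally disconnected case, $X_{r,s} \times X_t$ when one edge is present) and read off the RGD-axioms and injectivity from the factors. The paper's proof is a two-line assertion of exactly this identification, so your version simply fills in the details the paper leaves implicit.
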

\begin{proof}
	Let $J = \{r, s, t\}$ and assume $m_{rs} = 2 = m_{rt}$.	If $(\langle J \rangle, J)$ is of type $A_1 \times A_1 \times A_1$, then $G_{\{r, s, t\}} = X_r \times X_s \times X_t$ is an RGD-system. Otherwise, $G_{\{r, s, t\}} = X_r \times X_{s, t}$ is an RGD-system.
\end{proof}

\begin{lemma}
	Let $\mathcal{F}$ be an irreducible Moufang foundation of type $(W, S)$. Then $\mathcal{F}$ is integrable if and only if $\mathcal{D}_{\mathcal{F}}$ is an RGD-system and $U_{\pm s} \to G$ are injective.
\end{lemma}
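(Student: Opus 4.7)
The statement has two directions; the backward direction is essentially immediate from work already done, so the main content lies in the forward implication.

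For the backward direction, assume $\mathcal{D}_{\mathcal{F}}$ is an RGD-system (so in particular it satisfies (RGD$3$)) and that the canonical maps $U_{\pm s} \to G$ are injective. Then Corollary \ref{inclusionsinjectivergd3} immediately provides $\mathcal{F} \cong \mathcal{F}(\Delta(\mathcal{D}_{\mathcal{F}}), B_+)$, exhibiting the desired twin building and chamber; hence $\mathcal{F}$ is integrable.

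For the forward direction, assume $\mathcal{F}$ is integrable, so there is a twin building $\Delta = (\Delta_+, \Delta_-, \delta_*)$ and a chamber $c$ with $\mathcal{F} \cong \mathcal{F}(\Delta, c)$. The plan is to run the strategy of Lemma \ref{Lemma: integrable implies RGD-system} but with $J = S$ instead of a proper subset. First, by \cite[Theorem $1.5$]{MR95} and \cite[Theorem $8.27$]{AB08}, $\Delta$ is a Moufang twin building, so its root groups $U_{\alpha} \leq \Aut(\Delta)$ exist. Lemma \ref{Lemma: Apartment of foundation is contained in Apartment of twin building} provides a twin apartment $\Sigma$ of $\Delta$ containing the images of the apartments $\Sigma_J$ of $\mathcal{F}$, so the identifications made in the construction of $\mathcal{D}_{\mathcal{F}}$ are compatible with the root groups in $\Aut(\Delta)$.

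Next I would use the argument already recorded in Example \ref{Example:Xs injective} to see that $K_s$ lies in the kernel of $U_s \star U_{-s} \to \Aut(\Delta)$, obtaining homomorphisms $X_s \to \Aut(\Delta)$. For each pair $\{s,t\} \in E(S)$ with $m_{st} > 2$, the Steinberg group $\hat{H}_{\{s,t\}}$ maps to $\Aut(\Delta)$ (via injectivity of $U_{[\alpha,\beta]}$ into $\Aut(R_{\{s,t\}}(c))$ together with the natural embedding of the residue's automorphism group), and $K_{st}$ lies in this kernel, so we get $X_{s,t} \to \Aut(\Delta)$; for $m_{st} = 2$ the same is trivial via commutativity and $X_{s,t} = X_s \times X_t$. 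Assembling these via the universal property of the direct limit yields a homomorphism $\Phi: G \to \Aut(\Delta)$ sending $U_{\pm s}$ onto $U_{\pm \alpha_s}$. Since $U_{\pm s}$ embeds into $\Aut(\Delta)$, it must also embed into $G$, giving injectivity of $U_{\pm s} \to G$.

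It then remains to verify (RGD$3$) in $G$, after which Theorem \ref{Theorem: Moufang foundation RGD0+3 implies RGD-system} delivers that $\mathcal{D}_{\mathcal{F}}$ is an RGD-system. For this, note that every $U_{\alpha}$ with $\alpha \in \Phi_+$ lies in $\langle U_{\alpha_s} \mid s\in S\rangle$ and therefore fixes $c$ under $\Phi$, whereas $U_{-\alpha_s}$ does not; if $U_{-s}$ were contained in $U_+$ inside $G$, pushing through $\Phi$ would contradict the corresponding (RGD$3$) in $\Aut(\Delta)$. The main (but already dispatched) obstacle is the construction of $\Phi$: one has to know that the kernels $K_s$ and $K_{st}$ used to define $X_s$ and $X_{s,t}$ act trivially on the whole twin building, not just on the rank $2$ residue encoded in the foundation — and this is exactly what Example \ref{Example:Xs injective} and the kernel analysis in the paragraph preceding it were designed to give.
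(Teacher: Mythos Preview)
Your proof is correct and follows exactly the paper's approach: the backward direction is Corollary \ref{inclusionsinjectivergd3}, and for the forward direction the paper simply invokes Lemma \ref{Lemma: integrable implies RGD-system} with $J = S$, whose proof you have essentially reproduced in detail (constructing $G \to \Aut(\Delta)$ via Example \ref{Example:Xs injective} and the Steinberg-group kernel analysis, then reading off injectivity of $U_{\pm s} \to G$ and (RGD$3$)). The only compression available is to cite that lemma directly rather than re-derive it.
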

\begin{proof}
	One implication is Corollary \ref{inclusionsinjectivergd3}; the other follows from Lemma \ref{Lemma: integrable implies RGD-system} applied to $J=S$.
\end{proof}

\begin{lemma}
	Assume that for every irreducible $J \subseteq S$ with $\vert J \vert = 3$ the $J$-residue $\mathcal{F}_J$ is integrable. Then $X_s \to X_{s, t}$ is injective for all $s\neq t\in S$.
\end{lemma}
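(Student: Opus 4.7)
If $m_{st} = 2$, the conclusion is immediate since by definition $X_{s,t} = X_s \times X_t$, and the canonical map $X_s \to X_s \times X_t$, $x \mapsto (x, 1)$, is injective. I therefore focus on $m_{st} \geq 3$. In this case $\{s, t\}$ is irreducible, and I select an irreducible subset $J \subseteq S$ of cardinality $3$ with $\{s, t\} \subseteq J$. By hypothesis $\mathcal{F}_J$ is integrable, so there exist a twin building $\Delta$ and a chamber $c$ of $\Delta$ with $\mathcal{F}_J \cong \mathcal{F}(\Delta, c)$. Lemma \ref{Lemma: integrable implies RGD-system} then provides a canonical homomorphism $G_J \to \Aut(\Delta)$, and applying Example \ref{Example:Xs injective} to the pair $(\Delta, \mathcal{F}_J)$ shows that the canonical map from $X_s$ into $\Aut(\Delta)$ is injective.

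I would compute the kernel of $X_s \to X_{s,t}$ by unwinding the defining quotient $X_{s,t} = \hat{H}_{\{s,t\}}/K_{st}$. If $\bar{g} \in X_s$ has trivial image, lift to $g \in U_s \star U_{-s}$; then $\phi_s(g) \in K_{st} = \phi_s(K_s)\phi_t(K_t)$, so $\phi_s(g) = \phi_s(a)\phi_t(b)$ for some $a \in K_s$ and $b \in K_t$. Setting $h := g a^{-1}$ gives $\phi_s(h) = \phi_t(b)$. Since $K_t \subseteq \ker \pi_{t, \{s,t\}}$ and the kernel of the Steinberg action $\pi^{\mathrm{St}}_{\{s,t\}} \colon \hat{H}_{\{s,t\}} \to \Aut(\Delta_{\{s,t\}})$ equals $Z(\hat{H}_{\{s,t\}})$, the element $\phi_s(h)$ lies in $Z(\hat{H}_{\{s,t\}})$, so $h$ acts trivially on $\Delta_{\{s,t\}}$.

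To close the argument I would promote this triviality on a single rank $2$ residue to triviality on every $\Delta_K$ with $K \in E(S)$ and $s \in K$, which is precisely what is needed for $h \in K_s = \ker \pi_s$. The strategy is to pass through the factorisation $X_s \to X_{s,t} \to G_J \to \Aut(\Delta)$ and then invoke the injectivity from Example \ref{Example:Xs injective} applied to $(\Delta, \mathcal{F}_J)$: fixing the $\{s,t\}$-residue pointwise, combined with $h$ lying in the rank $1$ subgroup, should force $h$ to act trivially on all of $\Delta$, hence on every $\Delta_{K'}$ with $K' \in E(J)$, $s \in K'$. For each remaining $K = \{s, r\} \in E(S) \setminus E(J)$ one repeats the argument with $J' = \{r, s, t\}$ in place of $J$ (irreducible since $m_{sr}, m_{st} > 2$), using integrability of $\mathcal{F}_{J'}$ from the hypothesis. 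Together these give $h \in K_s$, whence $g \in K_s$ and $\bar{g} = 1$.

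The main obstacle I expect is the promotion step. Because $X_s$ is defined relative to $\mathcal{F}$ rather than to $\mathcal{F}_J$ --- and the paper itself warns that $(\mathcal{D}_{\mathcal{F}})_J \neq \mathcal{D}_{(\mathcal{F}_J)}$ in general --- one must carefully exploit the Moufang axiom (MF2), together with the Moufang twin building structure of the integrating $\Delta$, to show that $\ker \pi_{s, \{s,t\}}$ in fact coincides with $K_s = \bigcap_{K \in E(S),\, s \in K} \ker \pi_{s, K}$. This independence of $\ker \pi_{s, K}$ on $K$ is the crux, and it is what Example \ref{Example:Xs injective} together with the integrability of the various rank-$3$ residues $\mathcal{F}_{J'}$ should ultimately deliver.
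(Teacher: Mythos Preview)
Your approach and the paper's share the same decisive ingredient --- the factorisation $X_s \to X_{s,t} \to G_{J'} \to \Aut(\Delta')$ for a suitable irreducible rank~$3$ set $J'$ --- but you do not exploit it fully, and this is what creates the ``main obstacle'' you worry about. Since $\bar g \in \ker(X_s \to X_{s,t})$ and the direct-limit map $X_s \to G_{J'}$ coincides with the composite through $X_{s,t}$, the element $\bar g$ is already trivial in $G_{J'}$ and therefore acts trivially on \emph{all} of $\Delta'$, not just on the $\{s,t\}$-residue. There is no promotion step to carry out, and no need to invoke Example~\ref{Example:Xs injective} at this point. In particular your entire second paragraph --- unpacking $K_{st}$ inside $\hat H_{\{s,t\}}$, writing $\phi_s(g)=\phi_s(a)\phi_t(b)$, and passing to $h=ga^{-1}$ --- is superfluous: since $a\in K_s$, the elements $g$ and $h$ have the same image $\bar g$ in $X_s$, so tracking $h$ buys nothing.

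The paper's proof is simply the streamlined contrapositive of what you sketch. Rather than iterating over all $K\in E(S)$ with $s\in K$, it starts from a nontrivial $g\in\ker(X_s\to X_{s,t})$, uses the defining injection $X_s\hookrightarrow\prod_{K}\Aut(\Delta_K)$ to pick one $K=\{s,r\}$ on which $g$ acts nontrivially (necessarily $K\neq\{s,t\}$), sets $J=K\cup\{t\}$, and reads off the contradiction: $g$ is trivial in $G_J$ via $X_{s,t}$, hence trivial in $\Aut(\Delta)$ by Lemma~\ref{Lemma: integrable implies RGD-system}, hence trivial on $R_K(c)\cong\Delta_K$. Your version would become correct and equivalent once you drop the Steinberg-group computation and observe that the factorisation through $G_{J'}$ already forces $\bar g$ to vanish in $\Aut(\Delta')$ outright.
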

\begin{proof}
	Let $s\neq t \in S$. If $m_{st} = 2$ there is nothing to show, as $X_{s, t} = X_s \times X_t$. Thus we assume $m_{st} \neq 2$ and let $1\neq g\in \ker( X_s \to X_{s, t} )$. Since $\ker(X_s \to \prod_{s\in J \in E(S)} \Aut(\Delta_J)) = \{1\}$, there exists $s \in K \in E(S)$ such that $g \notin \ker(X_s \to \Aut(\Delta_K))$. As $g\in \ker(X_s \to X_{s, t})$, we have $g \in \ker(X_s \to \Aut(\Delta_{\{s, t\}}))$ and hence $K \neq \{s, t\}$. Let $J = K \cup \{t\}$. Then $J$ is irreducible. As $\mathcal{F}_J$ is integrable, there exists a twin building $\Delta$ and a chamber $c$ of $\Delta$ such that $\mathcal{F}_J \cong \mathcal{F}(\Delta, c)$. By Lemma \ref{Lemma: integrable implies RGD-system} we obtain that $G_J$ acts on $\Delta$. As $g$ is trivial in $G_J$ but not in $\Aut(\Delta)$, this yields a contradiction and hence $X_s \to X_{s, t}$ is injective for all $s\neq t \in S$.
\end{proof}

\section{The $3$-spherical case}

In this section we let $\mathcal{F}$ be a Moufang foundation of irreducible $3$-spherical type $(W, S)$ and of rank at least $3$ satisfying Condition $\lco$. Moreover, we assume that for every irreducible $J \subseteq S$ with $\vert J \vert = 3$ the $J$-residue $\mathcal{F}_J$ is integrable and satisfies Condition $\lsco$. Let $\left( \Sigma_J \right)_{J \in E(S)}$ be an apartment of $\mathcal{F}$ and let $U_{\pm s}, X_s, X_{s, t}$ and $\mathcal{D}_{\mathcal{F}} = (G, (U_{\alpha})_{\alpha \in \Phi})$ be as before.

Our goal is to show that $\mathcal{F}$ is integrable. By Corollary \ref{inclusionsinjectivergd3} it suffices to show that the canonical mappings $U_{\pm s} \to G$ are injective and that $\mathcal{D}_{\mathcal{F}} = (G, (U_{\alpha})_{\alpha \in \Phi})$ satisfies (RGD$3$). We will show that $G$ acts non-trivially on a building and deduce both hypotheses from the action. For all $s\in S$ we fix $1 \neq e_s \in U_s$ and let $n_s := m(e_s)$.

For $s \neq t \in S$ with $m_{st} = 2$ we define $\Delta_{\{s, t\}}$ to be the spherical building associated with $X_s \times X_t$ (cf. \cite[Proposition $7.116$]{AB08}). Using \cite[Proposition $7.116$, Corollary $7.68$ and Remark $7.69$]{AB08}), we get (similar as for Moufang buildings) a natural labelling. In particular, $\left( (U_s \cup \infty_s)_{s\in S}, (\Delta_{\{s, t\}})_{s \neq t \in S} \right)$ is a blueprint. We will denote it by $\mathcal{B}_{\mathcal{F}}$ and remark that the restriction of $\mathcal{B}_{\mathcal{F}}$ to any $A_1 \times A_1$ residue is natural.

\begin{lemma}\label{integrablerealisable}
	Let $J \subseteq S$ be irreducible and of rank $3$. Then the blueprint $\mathcal{B}_{\mathcal{F}_J}$ is realisable.
\end{lemma}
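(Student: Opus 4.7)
The plan is to use the integrability of $\mathcal{F}_J$ to produce a concrete building that conforms to $\mathcal{B}_{\mathcal{F}_J}$; the realisability then drops out of Corollary \ref{Corollary: Ro89Theorem7.1+7.2}. Since $J$ is a rank-$3$ subset of the $3$-spherical type $S$, the Coxeter group $\langle J\rangle$ is finite, so integrability yields a thick twin building $\Delta = (\Delta_+,\Delta_-,\delta_\ast)$ of spherical type $(\langle J\rangle, J)$ and a chamber $c$ with $\mathcal{F}_J \cong \mathcal{F}(\Delta,c)$. By \cite[Theorem $1.5$]{MR95} together with \cite[Theorem $8.27$]{AB08}, $\Delta$ is a Moufang twin building, and in particular $\Delta_+$ is a thick irreducible spherical Moufang building of type $(\langle J\rangle, J)$.

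Next I would apply Proposition \ref{Ro89Proposition7.5} to $\Delta_+$: fixing $1\neq e_s \in U_{\alpha_s}$ and setting $n_s := m(e_s)$, the building $\Delta_+$ conforms to the blueprint $\mathcal{B}' := \bigl((U_{\alpha_s}')_{s\in J},(R_{st})_{s\neq t\in J}\bigr)$ arising from the natural labelling on $E_2(c)$. It then remains to exhibit an isomorphism between $\mathcal{B}'$ and $\mathcal{B}_{\mathcal{F}_J}$, after which the fact that $\Delta_+$ conforms to $\mathcal{B}_{\mathcal{F}_J}$ gives realisability via Corollary \ref{Corollary: Ro89Theorem7.1+7.2}.

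The isomorphism of foundations $\mathcal{F}_J \cong \mathcal{F}(\Delta,c)$ immediately matches the rank-$2$ data at each $\{s,t\}\in E(J)$: the building $\Delta_{\{s,t\}}$ coming from the foundation is identified with $R_{st}(c) \subseteq \Delta_+$, and, since the foundation isomorphism is a Moufang isomorphism in the sense of (MF$2$), it also identifies the parameter sets $U_s$ with the root groups $U_{\alpha_s}$ of $\Delta_+$ (via Example \ref{Example:Xs injective}, which gives the injectivity of $X_s \to \Aut(\Delta)$ and hence of $U_{\pm s}\to \Aut(\Delta)$), compatibly with the distinguished basepoint $\infty_s$. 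The only nontrivial case to verify is $\{s,t\}\notin E(J)$, i.e.~$m_{st}=2$, where $\mathcal{B}_{\mathcal{F}_J}$ uses the spherical building associated to $X_s\times X_t$. Here I would argue that the rank-$2$ residue $R_{st}(c)\subseteq \Delta_+$ is exactly this building: since $U_{\alpha_s}$ and $U_{\alpha_t}$ commute in $\Aut(\Delta)$, the subgroup $\langle U_{\pm\alpha_s}\cup U_{\pm\alpha_t}\rangle \leq \Aut(\Delta)$ is a quotient of $X_s\times X_t$, and Lemma \ref{Lemma: RGD-system residue isomorphism} applied to this product RGD-system identifies its associated spherical building with $R_{st}(c)$. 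Moreover, the induced labelling on $R_{st}(c)$ is natural because the rank-$1$ root groups commute, so $(u_1,u_2)$ and $(u_2,u_1)$ correspond to the same chamber.

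Combining these identifications shows that $\mathcal{B}'$ and $\mathcal{B}_{\mathcal{F}_J}$ are isomorphic blueprints, hence $\Delta_+$ also conforms to $\mathcal{B}_{\mathcal{F}_J}$, and Corollary \ref{Corollary: Ro89Theorem7.1+7.2} gives realisability. The main obstacle is the bookkeeping in the $m_{st}=2$ case: one has to verify that the spherical building of $X_s\times X_t$ attached to the blueprint and the $A_1\times A_1$ residue of $\Delta_+$ agree not merely as buildings but as labelled objects with the same parameter sets $U_s'$ and $U_t'$, which is where the injectivity of $U_{\pm s}\to \Aut(\Delta)$ established in Example \ref{Example:Xs injective} is crucial.
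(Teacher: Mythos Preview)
Your approach is essentially the same as the paper's: obtain the spherical Moufang building $\Delta_+$ from integrability, invoke Proposition~\ref{Ro89Proposition7.5} for the natural blueprint on $E_2(c)$, and then transport conformity to $\mathcal{B}_{\mathcal{F}_J}$ via the foundation isomorphism $\mathcal{F}_J\cong\mathcal{F}(\Delta,c)$. Two minor remarks: first, once $\Delta_+$ conforms to $\mathcal{B}_{\mathcal{F}_J}$, realisability is immediate from the definition, so the appeal to Corollary~\ref{Corollary: Ro89Theorem7.1+7.2} is superfluous; second, the paper explicitly selects a twin apartment $\Sigma$ containing the images of the foundation apartments $(\Sigma_K)_{K\in E(J)}$ (via Lemma~\ref{Lemma: Apartment of foundation is contained in Apartment of twin building}) before defining the root groups $U_{\alpha_s}$, so that the identification $U_s\to U_{\alpha_s}$ is well-posed --- you should make this choice explicit too, since the root groups depend on the apartment. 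Your handling of the $m_{st}=2$ case is more detailed than the paper's, which treats it as evident.
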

\begin{proof}
	Since $\mathcal{F}_J$ is an integrable Moufang foundation, there exists a thick twin building $\Delta = (\Delta_+, \Delta_-, \delta_*)$ and a chamber $c$ of the twin building $\Delta$ such that $\mathcal{F}_J \cong \mathcal{F}(\Delta, c)$. Without loss of generality let $c\in \C_+$. Let $\Sigma$ be a twin apartment containing the images of the apartments $(\Sigma_K)_{K \in E(J)}$ of the foundation. We deduce from \cite[Theorem $1.5$]{MR95} and \cite[Theorem $8.27$ and Proposition $8.21$]{AB08} that $\Delta_+$ is a spherical Moufang building. By Proposition \ref{Ro89Proposition7.5} the building $\Delta_+$ conforms to the blueprint given by its restriction to $E_2(c)$ and the natural labelling of $\Delta_+$, i.e. $\left(U_{\alpha_s} \cup \{ \infty_s \})_{s\in J}, (R_{\{s, t\}}(c))_{s\neq t \in J} \right)$, where the $U_{\pm \alpha_s}$ are the root groups corresponding to the roots in $\Sigma \cap \Delta_+$. We will show that $\Delta_+$ conforms to $\mathcal{B}_{\mathcal{F}_J}$. As we have a labelling of $\Delta$ of type $(U_{\alpha_s} \cup \{ \infty_s \})_{s\in J}$ and $U_s \to U_{\alpha_s}$ are isomorphisms, we also have a labelling of type $(U_s \cup \{ \infty_s \})_{s\in J}$. Let $R$ be an $\{s, t\}$-residue of $\Delta_+$. Then there exists an isomorphism $\phi_R: R_{\{s, t\}}(c) \to R$ such that $x, \phi_R(x)$ have the same $s$- and $t$-labels for each $x\in R_{\{s, t\}}(c)$. As $\mathcal{F}_J \cong \mathcal{F}(\Delta, c)$, there exist isomorphisms $\alpha_K: \Delta_K \to R_K(c)$ for each $K \in E(J)$. Then $x$ and $\alpha_{\{s, t\}}(x)$ have the same $s$- and $t$-labels and hence $\Delta_+$ conforms to the blueprint $\mathcal{B}_{\mathcal{F}_J}$. In particular, $\mathcal{B}_{\mathcal{F}_J}$ is realisable.
\end{proof}

\begin{theorem}
	The blueprint $\mathcal{B_{\mathcal{F}}}$ is realisable.
\end{theorem}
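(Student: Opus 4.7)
The plan is to invoke the reduction step in Corollary \ref{Corollary: Ro89Theorem7.1+7.2}: a blueprint is realisable as soon as its restriction to every spherical rank $3$ subdiagram is realisable. Since $(W,S)$ is $3$-spherical, every three-element subset $J \subseteq S$ is spherical, so I only have to verify realisability of the restriction $\mathcal{B}_{\mathcal{F}}\vert_J$ for each such $J$. The key observation is that this restriction coincides with the blueprint $\mathcal{B}_{\mathcal{F}_J}$ associated with the $J$-residue $\mathcal{F}_J$, which is immediate from the construction: the parameter sets $U_s \cup \{\infty_s\}$ depend only on $s$, and the rank-$2$ buildings $\Delta_{\{s,t\}}$ together with their labellings are by definition the same in $\mathcal{F}$ and in $\mathcal{F}_J$ whenever $\{s,t\} \subseteq J$.

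I would then split into two cases according to the shape of the Coxeter diagram of $\langle J \rangle$. If $J$ is irreducible, then by the running assumption of this section the residue $\mathcal{F}_J$ is integrable, so Lemma \ref{integrablerealisable} applies directly and $\mathcal{B}_{\mathcal{F}_J}$ is realisable. If $J$ is reducible, then since $\mathcal{F}$ is a Moufang foundation, the commutation of the root groups $U_{\pm \alpha_s}$ and $U_{\pm \alpha_t}$ in $X_s \times X_t$ whenever $m_{st}=2$ ensures that the labelling of every $A_1 \times A_1$ residue in $\mathcal{B}_{\mathcal{F}}$ is natural in the sense introduced before Lemma \ref{reduciblerealisable} — a point already flagged in the paragraph preceding this theorem. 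Hence Lemma \ref{reduciblerealisable} yields realisability of $\mathcal{B}_{\mathcal{F}}\vert_J$ in the reducible case as well.

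Putting the two cases together, $\mathcal{B}_{\mathcal{F}}\vert_J$ is realisable for every $J\subseteq S$ of cardinality $3$, and then Corollary \ref{Corollary: Ro89Theorem7.1+7.2} delivers realisability of $\mathcal{B}_{\mathcal{F}}$ itself. The only really substantive content is hidden in Lemma \ref{integrablerealisable}, which is where the hypothesis of integrability of irreducible rank $3$ residues is consumed; after that the argument is just a bookkeeping exercise matching restrictions of blueprints to foundation-residue blueprints. The main conceptual obstacle one might worry about — whether the "naturalness" condition on $A_1 \times A_1$ residues really passes from $\mathcal{F}$ to each rank $3$ restriction — is trivial here because it is a local property of pairs $\{s,t\}$ with $m_{st}=2$, and such pairs lie in $J$ whenever both $s$ and $t$ do.
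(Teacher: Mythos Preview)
Your proposal is correct and follows exactly the same approach as the paper: reduce via Corollary \ref{Corollary: Ro89Theorem7.1+7.2} to spherical rank $3$ restrictions, then handle the irreducible case by Lemma \ref{integrablerealisable} and the reducible case by Lemma \ref{reduciblerealisable}. Your write-up is in fact more detailed than the paper's, which simply strings these three references together.
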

\begin{proof}
	Let $J \subseteq S$ be of rank $3$. Then $J$ is spherical by assumption. If $J$ is irreducible, then $\mathcal{F}_J$ is integrable and hence $\mathcal{B}_{\mathcal{F}_J}$ is realisable by Lemma \ref{integrablerealisable}. If $J$ is reducible, then $\mathcal{B}_{\mathcal{F}_J}$ is realisable by Lemma \ref{reduciblerealisable}. Thus each restriction to a spherical rank $3$ subdiagram is realisable and hence the claim follows from Corollary \ref{Corollary: Ro89Theorem7.1+7.2}.
\end{proof}

Recall that $H_s = \langle m(u) m(v) \mid u, v \in U_s \backslash \{1\} \rangle$ and $B_J = \langle H_s, U_s \mid s \in J \rangle$ for $J \subseteq S$. Then we have $u_t^{h^{-1}} \in U_t$ for $u_t \in U_t, h \in H_s$ and $u_s^{n_t}, [u_s, u_t]^{n_t} \in B_{\{s, t\}}$ for $s \neq t \in S, u_s \in U_s, u_t \in U_t$.

\begin{comment}

\begin{remark}\label{Remark: BJ to B homomorphism}
	In the next theorem we will show that for any $J \subseteq S$ with $\vert J \vert = 2$ the group $B_J$ acts on $\Cbf(\mathcal{B}_{\mathcal{F}})$. We will show this by induction on the length of the sequences. 
	
	Suppose $J \subseteq S$ such that $\vert J \vert = 3$. If $J$ is irreducible, then $(G_J, (U_{\alpha})_{\alpha \in \Phi^J})$ is an RGD-system of type $(\langle J \rangle, J)$ by Lemma \ref{Lemma: integrable implies RGD-system}, as $\mathcal{F}_J$ is integrable. If $J$ is reducible, then $(G_J, (U_{\alpha})_{\alpha \in \Phi^J})$ is an RGD-system of type $(\langle J \rangle, J)$ by Lemma \ref{Lemma: reducible implies RGD-system}. Using Lemma \ref{Uplus2amalgam} we know that $B_J$ is the $2$-amalgam of the groups $B_t$ for $t\in J$. In particular, for any relation $r$ in $B_J$ there exist $g_1, \ldots, g_n \in B_J$ and relations $r_1, \ldots, r_n \in B_{\{r, s\}}, B_{\{r, t\}}, B_{\{s, t\}}$ such that $r = \prod_{i=1}^{n} g_i^{-1} r_i g_i$.
\end{remark}

\end{comment}

\subsection*{An action associated with left multiplication}

\begin{theorem}\label{Theorem: 2-amalgam}
	Let $s\neq t \in S$. Then $B_{\{s, t\}}$ acts on $\Cbf(\mathcal{B}_{\mathcal{F}})$ as follows: Let $s_1, \ldots, s_k \in S$ be such that $s_1 \cdots s_k$ is reduced and let $u_i \in U_{s_i}$. Let $r\in \{s, t\}$ and $g\in U_r \cup H_r$. Then $\omega_B(g, ()) := ()$ and for $k>0$ we have
	\allowdisplaybreaks
	\begin{align*}
		\omega_B(g, (u_1, \ldots, u_k)) &:= \begin{cases}
			(u_1^{g^{-1}}, \omega_B(g^{n_{s_1}}, (u_2, \ldots, u_k))) & \text{if } g\in H_r, \\
			(gu_1, u_2, \ldots, u_k) & \text{if } r=s_1 \text{ and } g \in U_r, \\
			(u_1, \omega_B(g^{n_{s_1}} [g, u_1]^{n_{s_1}}, (u_2, \ldots, u_k))) & \text{if } r \neq s_1 \text{ and } g \in U_r.
		\end{cases}
	\end{align*}	
\end{theorem}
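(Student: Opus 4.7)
The plan is to proceed by induction on $k$, the length of the reduced type of the sequences in $\Cbf(\mathcal{B}_{\mathcal{F}})$, simultaneously defining for every two-element subset $J$ of $S$ an action of $B_J$ on chambers represented by sequences of length at most $k$. The base case $k = 0$ is trivial. For the inductive step, I use the formulas in the statement to define the action of a generator of $B_{\{s, t\}}$ on a length-$k$ sequence by invoking an action (on the suffix of length $k-1$) of some $B_{\{s_1, r\}}$ available by the inductive hypothesis. The recursive output is well-formed: $u_1^{g^{-1}} \in U_{s_1}$ since $H_r$ normalizes every root group, and $g^{n_{s_1}}$ and $g^{n_{s_1}}[g, u_1]^{n_{s_1}}$ lie in $B_{\{s_1, r\}}$ by the conjugation and commutator relations available in the rank-$2$ RGD-system $X_{\{s_1, r\}}$ (which exists either as $X_{s_1} \times X_r$ when $m_{s_1 r} = 2$, or as a quotient of the Steinberg group $\hat{H}_{\{s_1, r\}}$ of the Moufang building $\Delta_{\{s_1, r\}}$).

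For well-definedness at the level of equivalence classes, I analyse a single elementary equivalence that replaces a subword $\bar{w}$ of type $p(s_i, s_{i+1})$ by an equivalent $\bar{w}'$ in the rank-$2$ residue of $\Delta_{s_i s_{i+1}}$. If $i \geq 2$, the equivalence lies in the suffix $(u_2, \ldots, u_k)$ and the inductive hypothesis applies. If $i = 1$, the subword sits at the leading position, and here the formulas for $\omega_B$ on the first two coordinates are precisely the left-multiplication formulas of Theorem \ref{MoufangLeftmultiplicationAction} applied in the rank-$2$ RGD-system $X_{\{s_1, s_2\}}$; since the blueprint $\mathcal{B}_{\mathcal{F}}$ restricted to this residue matches the natural labelling of the Moufang building $\Delta_{\{s_1, s_2\}}$ (as used in the proof of the preceding theorem, via Corollary \ref{Homotopyinrank2}), and left multiplication is an honest group action that respects the label equivalence, this settles the leading case.

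Finally, I verify the action axioms for $B_{\{s, t\}}$. When $m_{st} = 2$, $B_{\{s, t\}} = (H_s U_s) \times (H_t U_t)$, and commutativity of the two factors at the level of $\omega_B$ is a direct term-by-term check. When $m_{st} > 2$, Lemma \ref{Uplus2amalgam} applied to the rank-$2$ RGD-system $X_{\{s, t\}}$ presents $B_{\{s, t\}}$ as the $2$-amalgam of $H_s U_s$ and $H_t U_t$; rank-$1$ relations within each factor reduce, via the formulas, to the $k = 1$ case (action on a single panel), while the glueing relations reduce again to left multiplication on the Moufang residue $\Delta_{\{s, t\}}$, handled as in the previous paragraph. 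The main obstacle is the leading-coordinate case in the equivalence-class check: the recursion traverses rank-$2$ subgroups $B_{\{s_1, r\}}$ that generally differ from $B_{\{s, t\}}$, and one must confirm that, once interpreted in the appropriate rank-$2$ RGD-system, all the recursive outputs are shadows of the common left-multiplication rule in the corresponding Moufang rank-$2$ residue. The matching between $\mathcal{B}_{\mathcal{F}}$ and the natural Moufang labellings, together with the realizability theorem, is exactly what makes this reconciliation possible.
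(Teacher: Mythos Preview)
Your inductive framework (all $B_J$ for $\vert J\vert=2$ simultaneously, induction on $k$) matches the paper's, but the verification of the group-action axioms has a genuine gap.

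First, the theorem is about $\Cbf(\mathcal{B}_{\mathcal{F}})$, the chamber system of \emph{sequences}, not of equivalence classes; compatibility with the blueprint equivalence is a separate statement (Lemma~\ref{Lemma: BJ homotopic sequences to homotopic sequences}). So your middle paragraph is not part of what has to be proved here.

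More seriously, your application of Lemma~\ref{Uplus2amalgam} to the rank-$2$ system $X_{\{s,t\}}$ is vacuous: for $\vert S\vert=2$ the ``$2$-amalgam of the groups $HU_{\alpha_s}$'' already contains $B_+$ itself among the vertex groups, so the lemma gives no presentation of $B_{\{s,t\}}$ in terms of the rank-$1$ pieces $H_sU_s$ and $H_tU_t$. Consequently you have no way to reduce the mixed $\{s,t\}$-relations to rank-$1$ relations. What actually happens is this: take a relation in $B_{\{s,t\}}$, apply the recursion once (stripping off the $s_1$-coordinate), and the resulting word is a relation in $B_{\{s,t,s_1\}}$, a \emph{rank-$3$} group when $s_1\notin\{s,t\}$. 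Individual generators land in $B_{\{s_1,s\}}$ or $B_{\{s_1,t\}}$, but the full word mixes them. To continue the induction you must know that every relation of $B_{\{s,t,s_1\}}$ is a product of conjugates of rank-$2$ relations --- and this is exactly Lemma~\ref{Uplus2amalgam} applied to the rank-$3$ RGD-system $(G_{\{s,t,s_1\}},(U_\alpha))$, whose existence comes from Lemma~\ref{Lemma: integrable implies RGD-system} or Lemma~\ref{Lemma: reducible implies RGD-system}. This rank-$3$ step is where the standing hypotheses ($3$-sphericity, integrability of irreducible rank-$3$ residues, Condition $\lsco$) enter the argument; your proposal never invokes them and therefore cannot close the induction.

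The paper's proof organizes the relations via the semidirect product $B_{\{s,t\}}=\langle H_s\cup H_t\rangle\ltimes\langle U_s\cup U_t\rangle$, computes explicitly that each recursed relation equals a conjugate of the original in $G_{\{s,t,s_1\}}$ (hence is a relation of $B_{\{s,t,s_1\}}$), and then applies the rank-$3$ amalgam decomposition to fall back to the inductive hypothesis on sequences of length $k-1$.
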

\begin{proof}
	We have to show that every relation in $B_{\{s, t\}}$ acts trivial on $(u_1, \ldots, u_k)$. We prove the hypothesis by induction on $k$. For $k=0$ there is nothing to show. Thus we assume $k \geq 1$. We consider a relation in $B_{\{s, t\}} = \langle H_s \cup H_t \rangle \ltimes \langle U_s \cup U_t \rangle$ (note that $H_s, H_t$ normalise $U_s, U_t$ and their intersection is trivial by \cite[Lemma $7.62$]{AB08}). Let $h_1, \ldots, h_m \in \langle H_s \cup H_t \rangle$ be such that $h_1 \cdots h_m = 1$ in $\langle H_s \cup H_t \rangle$. Then $\omega_B(h_1 \cdots h_m, (u_1, \ldots, u_k)) = (u_1^{ (h_1 \cdots h_m)^{-1} }, \omega_B( \prod_{i=1}^{m} h_i^{n_{s_1}}, (u_2, \ldots, u_k) ))$. We have $u_1^{ (h_1 \cdots h_m)^{-1} } = u_1$ and $\prod_{i=1}^{m} h_i^{n_{s_1}} = ( h_1 \cdots h_m )^{n_{s_1}}$ is a relation in $G_{\{s, t, s_1\}}$. As the product is contained in $B_{\{s, t, s_1\}}$, it is a relation in $B_{\{s, t, s_1\}}$. Note that $G_{\{s, t, s_1\}}$ is an RGD-system by Lemma \ref{Lemma: integrable implies RGD-system} or Lemma \ref{Lemma: reducible implies RGD-system}. Using Lemma \ref{Uplus2amalgam} we know that $B_{\{s, t, s_1\}}$ is the $2$-amalgam of the groups $B_{\{r\}}$ with $r\in \{s, t, s_1\}$. By the universal property of direct limits and induction, we deduce that $B_{\{s, t, s_1\}}$ acts on sequences of length at most $k-1$ and hence we can write
	\begin{align*}
		\prod_{i=1}^{m} h_i^{n_{s_1}} &= \prod_{i=1}^{n} g_i^{-1} r_i g_i
	\end{align*}
	for some $g_i \in B_{\{s, t, s_1\}}$ and relations $r_i \in B_{\{s, t\}} \cup B_{\{s, s_1\}} \cup B_{\{t, s_1\}}$. The claim follows now by induction. Let $v_1, \ldots, v_m \in U_s$ and $w_1, \ldots, w_m \in U_t$ be such that $\prod_{i=1}^{m} v_i w_i = 1$ in $\langle U_s \cup U_t \rangle$. We distinguish the following cases:
	\begin{enumerate}[label=(Case \Roman*)]
		\item $s_1 \notin \{ s, t \}$: Then $\omega_B( \prod_{i=1}^{m} v_i w_i, (u_1, \ldots, u_k) ) = (u_1, \omega_B( b, (u_2, \ldots, u_k) ))$, where $b = \prod_{i=1}^{m} v_i^{n_{s_1}} [v_i, u_1]^{n_{s_1}} w_i^{n_{s_1}} [w_i, u_1]^{n_{s_1}}$, and $b = \left( \prod_{i=1}^{m} v_i w_i \right)^{u_1 n_{s_1}}$ is a relation in $G_{\{s, t, s_1\}}$ and hence in $B_{\{s, t, s_1\}}$. As before, the claim follows by induction.
		
		\item $s_1 \in \{ s, t \}$: Without loss of generality we can assume $s_1 = t$. Then we compute $\omega_B( \prod_{i=1}^{m} v_i w_i, (u_1, \ldots, u_k) ) = ( w_1 \cdots w_m u_1, \omega_B( \prod_{i=1}^{m} v_i^{n_{s_1}} [ v_i, w_i \cdots w_m u_1 ]^{n_{s_1}} , (u_2, \ldots, u_k) ) )$. Since $\prod_{i=1}^{m} v_i w_i$ is a relation in $\langle U_s \cup U_t \rangle$ and $U_s$ acts trivial on the $t$-panel containing a fundamental chamber, the element $w_1 \cdots w_m \in U_t$ acts also trivial on this $t$-panel. Since the action is simply transitive, we deduce $w_1 \cdots w_m = 1$ and
		\allowdisplaybreaks
		\begin{align*}
			\prod_{i=1}^{m} v_i^{n_{s_1}} [ v_i, w_i \cdots w_m u_1 ]^{n_{s_1}} &= \left( \prod_{i=1}^{m} ( w_i \cdots w_m u_1 )^{-1} v_i w_i \cdots w_m u_1 \right)^{n_{s_1}} \\
			&= \left( \prod_{i=1}^{m} ( w_i \cdots w_m )^{-1} v_i w_i \cdots w_m \right)^{u_1 n_{s_1}} \\
			&= \left( (w_1 \cdots w_m)^{-1} \prod_{i=1}^{m} v_i w_i \right)^{u_1 n_{s_1}}
		\end{align*}
		is a relation in $G_{\{s, t\}}$ and hence in $B_{\{s, t\}}$. The claim follows by induction.
	\end{enumerate}
	It remains to show that for all $h\in H_t, v\in U_s, v' := h^{-1} v^{-1}h = (h^{-1}vh)^{-1} \in U_s$ the element $h^{-1} vh v'$ acts trivial on $(u_1, \ldots, u_k)$ (the case where $s, t$ are interchanged follows similarly). We distinguish the following cases:
	\begin{enumerate}[label=(Case \Roman*)]
		\item $s=s_1$: Then $\omega_B(h^{-1}vh v', (u_1, \ldots, u_k)) = \left( (v( v'u_1 )^{h^{-1}})^h, \omega_B( (h^{-1})^{n_{s_1}} h^{n_{s_1}}, (u_2, \ldots, u_k) ) \right)$. As $(v( v'u_1 )^{h^{-1}})^h = v^h v' u_1 = u_1$ in $U_s$ and $(h^{-1})^{n_{s_1}} h^{n_{s_1}}$ is a relation in $G_{\{s\}}$ and hence in $B_{\{s\}}$, the claim follows by induction.
		
		\item $s \neq s_1$: Then $\omega_B( h^{-1}vhv', (u_1, \ldots, u_k) ) = ( (u_1^{h^{-1}})^h, \omega_B( b, (u_2, \ldots, u_k)) )$, where $b = (h^{-1})^{n_{s_1}} v^{n_{s_1}} [v, u_1^{h^{-1}}]^{n_{s_1}} h^{n_{s_1}} (v')^{n_{s_1}} [v', u_1]^{n_{s_1}}$, and $b = ( h^{-1} v h v' )^{u_1 n_{s_1}}$ is a relation in $G_{\{s, t, s_1\}}$ and hence in $B_{\{s, t, s_1\}}$. As before, the claim follows by induction. \qedhere
	\end{enumerate}
\end{proof}

\begin{lemma}\label{Lemma: action stops}
	Let $J \subseteq S$ be such that $\vert J \vert \leq 3$. Let $u_i, u_i' \in \bigcup_{j\in J} U_j$.
	\begin{enumerate}[label=(\alph*)]
		\item Let $g\in B_J$ be such that $\omega_B(g, (u_1, \ldots, u_k)) = (u_1', \ldots, u_k')$. Then $g\cdot u_1 n_1 \cdots u_k n_kB_J = u_1' n_1 \cdots u_k' n_kB_J$.
		
		\item Let $g_1, \ldots, g_m \in \bigcup_{j\in J} U_j$ be and let $g = g_1 \cdots g_m$. We assume that $\omega_B(g_1 \cdots g_m, (u_1, \ldots,u_k)) = (u_1', \ldots, u_r', \omega_B(g', (u_{r+1}, \ldots, u_k)))$, i.e. $\omega_B$ does not stop after $r$ steps. Then we have $g' = (u_1' n_1 \cdots u_r' n_r)^{-1} g (u_1 n_1 \cdots u_r n_r)$ in $G_J$.
	\end{enumerate}
\end{lemma}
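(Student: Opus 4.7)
The plan is to prove both statements by induction using the case analysis of the recursive definition of $\omega_B$ together with the rewriting identities from Theorem \ref{MoufangLeftmultiplicationAction}. Since $\vert J \vert \leq 3$, Lemma \ref{Lemma: integrable implies RGD-system} (for irreducible $J$) and Lemma \ref{Lemma: reducible implies RGD-system} (for reducible $J$) imply that $(\mathcal{D}_{\mathcal{F}})_J = (G_J, (U_{\alpha})_{\alpha \in \Phi^J})$ is an RGD-system, so left multiplication in $G_J$ is well-defined and obeys the standard RGD rewriting identities. In particular, cosets of the form $u_1 n_1 \cdots u_k n_k B_J$ admit a well-defined action of $B_J$.

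For part (a), I would induct on $k$. The base $k=0$ is immediate since $g \in B_J$ fixes $B_J$. For the inductive step, I would verify each of the three cases in the recursive definition. If $g \in H_r$, the identities $gu_1 = u_1^{g^{-1}} g$ and $gn_1 = n_1 g^{n_1}$ give $g \cdot u_1 n_1 \cdots u_k n_k = u_1^{g^{-1}} n_1 \cdot g^{n_1} \cdot u_2 n_2 \cdots u_k n_k$ in $G_J$; since $g^{n_1} \in H_r H_{s_1} \subseteq B_J$, the induction hypothesis applies to the tail. If $g \in U_r$ with $r = s_1$, the claim is immediate from $gu_1 \in U_{s_1}$. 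If $g \in U_r$ with $r \neq s_1$, then $gu_1 = u_1 g [g, u_1]$ combined with conjugation by $n_1$ yields $g \cdot u_1 n_1 = u_1 n_1 \cdot g^{n_1} [g, u_1]^{n_1}$; by Lemma \ref{coUplus} the commutator lies in $U_{(\alpha_r, \alpha_{s_1})}$, so the new accumulated element is in $B_J$ and induction applies. The extension to products $g = g_1 \cdots g_m$ then follows from the group action property of $\omega_B$ established in Theorem \ref{Theorem: 2-amalgam}.

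For part (b), I would induct on $r$. The base case $r = 0$ is trivial with $g' = g$. For the inductive step, the identities from part (a) give the key one-step equality: whenever the recursive computation of $\omega_B(g, \cdot)$ peels off the $(i+1)$-th entry with current accumulated element $\tilde{g}$, producing a new front entry $u_{i+1}'$ and a new accumulated element $\tilde{g}'$, one has $\tilde{g} \cdot u_{i+1} n_{i+1} = u_{i+1}' n_{i+1} \cdot \tilde{g}'$ in $G_J$, equivalently $\tilde{g}' = (u_{i+1}' n_{i+1})^{-1} \tilde{g} (u_{i+1} n_{i+1})$. Iterating this identity $r$ times and telescoping the successive conjugations yields the claimed formula $g' = (u_1' n_1 \cdots u_r' n_r)^{-1} g (u_1 n_1 \cdots u_r n_r)$ in $G_J$.

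The main obstacle is the bookkeeping for part (b) when $g = g_1 \cdots g_m$ is a long product of root group elements: one must verify that the recursive computation of $\omega_B(g, \cdot)$ decomposes cleanly into one-step reductions, and that at each intermediate stage the accumulated element still lies in a subgroup of $G_J$ for which the one-step identity applies. Once this one-step identity is established in each of the three cases of the $\omega_B$-definition, the iteration through the $r$ steps becomes routine.
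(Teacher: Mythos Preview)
Your approach is correct and matches the paper's: part (a) is proved exactly as you describe, by induction on $k$ through the three defining cases of $\omega_B$, and part (b) by induction on $r$ via a one-step identity. The only difference is cosmetic: for the $r=1$ step in (b) the paper writes out the accumulated element explicitly as the double product $g' = \prod_{x=0}^{n} \prod_{y=i_x+1}^{i_{x+1}-1} g_y^{n_j}[g_y, g_{i_{x+1}}\cdots g_{i_n}u_1]^{n_j}$ (separating the indices $i_1<\cdots<i_n$ with $g_{i_\ell}\in U_{s_1}$ from the rest) and then simplifies algebraically to $(u_1'n_1)^{-1} g (u_1 n_1)$, whereas you obtain the same identity by composing the single-generator identities $g_i \cdot v\, n_1 = v'\, n_1 \cdot g_i''$ over $i=m,\ldots,1$ and telescoping; both arguments need, and use, that the resulting $g'$ again lies in $\langle U_j \mid j\in J\rangle$ (via Lemma~\ref{coUplus}) so that the induction hypothesis applies.
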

\begin{proof}
	We prove Assertion $(a)$ by induction on $k$. For $k=0$ the claim follows directly, as $\omega_B(g, ()) = ()$ and $g\cdot B_J = B_J$. Thus we assume $k\geq 1$. It suffices to show the claim for $g\in U_s \cup H_s$ with $s\in J$. If $g\in H_s$, we have $\omega_B(g, (u_1, \ldots, u_k)) = (u_1^{g^{-1}}, \omega_B(g^{n_1}, (u_2, \ldots, u_k)))$. Let $\omega_B(g^{n_1}, (u_2, \ldots, u_k)) = (u_2', \ldots, u_k')$. Using induction we have $g^{n_1} \cdot u_2 n_2 \cdots u_k n_k B_J = u_2' n_2 \cdots u_k' n_k B_J$. This implies
	\[ g \cdot u_1 n_1 \cdots u_kn_k B_J = u_1^{g^{-1}} n_1 g^{n_1} u_2 n_k \cdots u_k n_k B_J = u_1^{g^{-1}} n_1 u_2' n_2 \cdots u_k' n_k B_J \]
	As $\omega_B(g, (u_1, \ldots, u_k)) = (u_1^{g^{-1}}, u_2', \ldots, u_k')$, the claim follows. Assume $u_1 \in U_s$ and $g\in U_s$. Then $\omega_B(g, (u_1, \ldots, u_k)) = (gu_1, u_2, \ldots, u_k)$ and $g \cdot u_1 n_1 \cdots u_k n_k B_J = (gu_1) n_1 u_2 n_2 \cdots u_k n_k B_J$. Now we assume $u_1 \in U_t$ for some $s\neq t \in S$. Then $\omega_B(g, (u_1, \ldots, u_k)) = (u_1, \omega_B( g^{n_1} [g, u_1]^{n_1}, (u_2, \ldots, u_k) ))$. Let $\omega_B( g^{n_1} [g, u_1]^{n_1}, (u_2, \ldots, u_k) ) = (u_2', \ldots, u_k')$. Using induction we have $g^{n_1} [g, u_1]^{n_1} \cdot u_2 n_2 \cdots u_k n_k B_J = u_2'n_2 \cdots u_k'n_k B_J$. This implies
	\[ g u_1 n_1 \cdots u_k n_k B_J = u_1 n_1 g^{n_1} [g, u_1]^{n_1} u_2 n_2 \cdots u_k n_k B_J = u_1 n_1 u_2'n_2 \cdots u_k'n_k B_J \]
	As $\omega_B(g, (u_1, \ldots, u_k)) = (u_1, u_2', \ldots, u_k')$, the claim follows. We prove Assertion $(b)$ by induction on $r$. Let $j \in J$ be such that $u_1 \in U_j$. Let $i_1 < \ldots < i_n$ be all indices such that $g_{i_1}, \ldots, g_{i_n} \in  U_j$. Then we have the following:
	\[ \omega_B(g, (u_1, \ldots, u_k)) = (g_{i_1} \cdots g_{i_n} u_1, \omega_B(g', (u_2, \ldots, u_k))) \]
	where $g' = \prod_{x=0}^{n} \prod_{y=i_x +1}^{i_{x+1} -1} g_y^{n_j} [g_y, g_{i_{x+1}} \cdots g_{i_n} u_1]^{n_j}$ and $i_0 := 0, i_{n+1} := m+1$. We compute the following:
	\begin{align*}
		g' &= \prod_{x=0}^{n} \prod_{y=i_x +1}^{i_{x+1} -1} g_y^{n_j} [g_y, g_{i_{x+1}} \cdots g_{i_n} u_1]^{n_j} \\
		&= \prod_{x=0}^{n} \prod_{y=i_x +1}^{i_{x+1} -1} \left( (g_{i_{x+1}} \cdots g_{i_n} u_1)^{-1} g_y (g_{i_{x+1}} \cdots g_{i_n} u_1) \right)^{n_j} \\
		&= \prod_{x=0}^{n} \left( (g_{i_{x+1}} \cdots g_{i_n} u_1)^{-1} \left( \prod_{y=i_x +1}^{i_{x+1} -1} g_y \right) (g_{i_{x+1}} \cdots g_{i_n} u_1) \right)^{n_j} \\
		&= \left( (g_{i_1} \cdots g_{i_n} u_1)^{-1} \left( \prod_{y=1}^{m} g_m \right) u_1 \right)^{n_j} \\
		&= \left( g_{i_1} \cdots g_{i_n} u_1 n_j \right)^{-1} g u_1 n_1
	\end{align*}
	As $g' \in \langle U_j \mid j \in J \rangle$, the claim follows by induction.
\end{proof}

\begin{lemma}\label{Lemma: BJ homotopic sequences to homotopic sequences}
	The action in Theorem \ref{Theorem: 2-amalgam} maps equivalent sequences to equivalent sequences. In particular, $\omega_B$ extends to an action of the building $\Cbf_{\mathcal{B}_{\mathcal{F}}}$.
\end{lemma}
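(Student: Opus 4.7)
The plan is to reduce to a single elementary equivalence and then induct on the length of the common prefix. So consider $(u_1,\ldots,u_k) = \bar x\,\bar w\,\bar y$ and $(v_1,\ldots,v_k) = \bar x\,\bar w'\,\bar y$ where $\bar w, \bar w'$ have types $p(s',t'), p(t',s')$ and are equivalent in the rank-$2$ Moufang building $\Delta_{\{s',t'\}}$ of the foundation. An arbitrary chain of elementary equivalences will then be handled by iteration, and it is enough to verify the claim for a single generator $g \in U_r \cup H_r$, $r \in \{s,t\}$, since any equivalence we establish is preserved under composition of generators.

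First I will argue by induction on $r := |\bar x|$. If $r \geq 1$, the recursion in Theorem \ref{Theorem: 2-amalgam} yields
\[
\omega_B(g,(z_1,z_2,\ldots,z_k)) = (z_1^{*}, \omega_B(g^{*}, (z_2,\ldots,z_k))),
\]
with $z_1^{*}$ and $g^{*}$ depending only on $g$ and $z_1$. Applied to both sequences, whose first entries coincide, this reduces the claim to the same statement for sequences of length $k-1$ with a common prefix shorter by one; prepending a common element preserves the equivalence, so the inductive hypothesis finishes this step.

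The base case $r = 0$ is the heart of the argument. After $m := m_{s't'}$ iterations of the recursion I will obtain
\[
\omega_B(g,(\bar w, \bar y)) = (\bar w^{*}, \omega_B(g_u, \bar y)), \qquad \omega_B(g,(\bar w', \bar y)) = (\bar w'^{*}, \omega_B(g_v, \bar y)),
\]
and I must show that $\bar w^{*} \simeq \bar w'^{*}$ in $\Delta_{\{s',t'\}}$ and that $g_u$ and $g_v$ induce the same action on the common tail $\bar y$. Both will follow from combining Lemma \ref{Lemma: action stops}(a), which translates the first $m$ steps into left multiplication by $g$ on a coset in $X_{s',t'}/B_{\{s',t'\}}$, with Corollary \ref{Homotopyinrank2} applied to the Moufang building $\Delta_{\{s',t'\}}$ (which characterises the equivalence $\bar w \simeq \bar w'$ as equality of the corresponding cosets). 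Lemma \ref{Lemma: action stops}(b) then identifies $g_u$ with $g_v$ up to an element of $B_{\{s',t'\}}$ stabilising the base chamber, which acts trivially on $\bar y$ modulo $\simeq$.

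The main obstacle is the case $|\{s,t,s',t'\}| = 4$, in which no single subset of $S$ of rank $\leq 3$ contains all the indices involved, so Lemma \ref{Lemma: action stops} cannot be applied in one stroke. In this situation, at each of the first $m$ steps the recursion uses the pass-through formula of Theorem \ref{Theorem: 2-amalgam}, whence $\bar w^{*} = \bar w$ and $\bar w'^{*} = \bar w'$, and only the comparison of $g_u$ and $g_v$ remains. I will track the propagated generator through the two rank-$3$ subsystems $\{s,s',t'\}$ and $\{t,s',t'\}$, each of which provides an RGD-system by Lemma \ref{Lemma: integrable implies RGD-system} (or Lemma \ref{Lemma: reducible implies RGD-system} when the rank-$3$ subset is reducible), and use the commutator relations (RGD$2$) to identify $g_u$ with $g_v$ by appealing once more to Corollary \ref{Homotopyinrank2} in $\Delta_{\{s',t'\}}$. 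This completes the induction; hence $\omega_B$ respects $\simeq$, and therefore descends to a well-defined action of $B_{\{s,t\}}$ on the chambers of $\Cbf_{\mathcal{B}_{\mathcal{F}}}$.
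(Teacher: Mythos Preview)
Your overall strategy---reduce to a single generator $g\in U_r\cup H_r$, strip the common prefix, and then compare the effect on the rank-$2$ piece using Lemma~\ref{Lemma: action stops} together with Corollary~\ref{Homotopyinrank2}---is exactly the paper's approach. Two points need correcting, however.

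\medskip
\textbf{The ``rank-$4$'' obstacle is illusory.} Once you have fixed a single generator $g\in U_r\cup H_r$, the second index of $B_{\{s,t\}}$ never enters the recursion: the formulas of Theorem~\ref{Theorem: 2-amalgam} for $\omega_B(g,-)$ depend only on $r$ and on the types $s',t'$ of the first $m$ entries. Hence the relevant index set is $J=\{r,s',t'\}$, which has cardinality at most $3$, and Lemma~\ref{Lemma: action stops} applies directly with this $J$. (Lemmas~\ref{Lemma: integrable implies RGD-system} and~\ref{Lemma: reducible implies RGD-system} guarantee that $G_J$ carries the required RGD-structure.) Your proposed detour through two different rank-$3$ subsystems and (RGD2) is therefore unnecessary.

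\medskip
\textbf{Missing case distinction and imprecise tail comparison.} The paper splits according to whether $r\in\{s',t'\}$ or not, and you should too. When $r\in\{s',t'\}$ and $g\in U_r$, the recursion \emph{stops} as soon as it meets an entry of type $r$; for the sequence of type $p(s',t')$ this happens at the first step, while for the sequence of type $p(t',s')$ it may not happen within the first $m$ steps at all. One then uses Lemma~\ref{Lemma: action stops}(b) to see that the propagated element in the second case is a relation (equals $1$ in $G_{\{s',t'\}}$), so the tails coincide. When $r\notin\{s',t'\}$, the recursion never stops within the first $m$ entries; here Lemma~\ref{Lemma: action stops}(b) together with the equalities $u_1 n_1\cdots u_m n_m = v_1 n_1'\cdots v_m n_m'$ and $u_1' n_1\cdots u_m' n_m = v_1' n_1'\cdots v_m' n_m'$ in $G_J$ (consequences of Corollary~\ref{Homotopyinrank2}) shows that $g_u=g_v$ \emph{exactly} in $B_J$, not merely ``up to an element stabilising the base chamber''. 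Your formulation ``acts trivially on $\bar y$ modulo $\simeq$'' is too weak to close the argument: you need the two propagated elements to act identically on $\bar y$, which is what the exact equality provides.
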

\begin{proof}
	In this proof we use the notation $U_{s_1 \cdots s_k}$ for $U_{s_1} \times \cdots \times U_{s_k}$. It suffices to show that every element in $\bigcup_{r\in S} U_r \cup H_r$ maps two elementary equivalent sequences to elementary equivalent sequences. Let $s\neq t \in S$, let $\bar{u} = \bar{u}_1 \bar{u}_0 \bar{u}_2$ be of type $(f_1, p(s, t), f_2)$ and $ \bar{v} = \bar{u}_1 \bar{v}_0 \bar{u}_2 $ be of type $(f_1, p(t, s), f_2)$, where $\bar{u}_0$ and $\bar{v}_0$ are equivalent in $\Delta_{\{s, t\}}$. It suffices to show the claim for the sequences $\bar{u}_0 \bar{u}_2$ of type $(p(s, t), f_2)$ and $\bar{v}_0 \bar{u}_2 $ of type $(p(t, s), f_2)$. Let $w\in W$ and $s \neq t \in S$ be such that $\ell(sw) = \ell(w) +1 = \ell(tw)$. Let $(u_1, \ldots, u_m) \in U_{p(s, t)}, (v_1, \ldots, v_m) \in U_{p(t, s)}$ be equivalent in $\Delta_{\{s, t\}}$, let $s_1, \ldots, s_k \in S$ be such that $w = s_1 \cdots s_k$ is reduced and let $(w_1, \ldots, w_k) \in U_{s_1 \cdots s_k}$. Then $\left( u_1, \ldots, u_m, w_1, \ldots, w_k \right)$ and $\left( v_1, \ldots, v_m, w_1, \ldots, w_k \right)$ are elementary equivalent. First we assume that $g \in H_r$ for some $r\in S$. Then we have the following:
	\allowdisplaybreaks
	\begin{align*}
		\omega_B(g, (u_1, \ldots, u_m, w_1, \ldots, w_k)) &= (u_1', \ldots, u_m', \omega_B(g^{p(n_s, n_t)}, (w_1, \ldots, w_k))) \\
		\omega_B(g, (v_1, \ldots, v_m, w_1, \ldots, w_k)) &= (v_1', \ldots, v_m', \omega_B(g^{p(n_t, n_s)}, (w_1, \ldots, w_k)))
	\end{align*}
	Let $J := \{r, s, t\}$. We deduce from Lemma \ref{Lemma: action stops}$(a)$ and Corollary \ref{Homotopyinrank2} that
	\allowdisplaybreaks
	\begin{align*}
		u_1' n_1 \cdots u_m' n_m B_J &= g \cdot u_1n_1 \cdots u_mn_mB_J = g \cdot v_1n_1' \cdots v_mn_m'B_J = v_1' n_1' \cdots v_m'n_m' B_J
	\end{align*}
	Using Corollary \ref{Homotopyinrank2} again, we deduce that $(u_1', \ldots, u_m') \simeq (v_1', \ldots, v_m')$. Since $p(n_s, n_t) = p(n_t, n_s)$ is a relation in $G_{\{r, s, t\}}$, we obtain $g^{p(n_s, n_t)} = g^{p(n_t, n_s)}$ in $B_{\{r, s, t\}}$ and hence they act equally on $(w_1, \ldots, w_k)$.
	
	Now let $g\in U_r$ for some $r\in S$. We note that $(u_1, \ldots, u_k) \simeq (v_1, \ldots v_k)$ for $u_i, v_i \in U_s \cup U_t$ implies $u_1 n_1 \cdots u_k n_k = v_1 n_1' \cdots v_kn_k'$ (cf. Corollary \ref{Homotopyinrank2} and \cite[p. $87$]{Ro89}). We consider the following two cases:
	\begin{enumerate}[label=(Case \Roman*)]
		\item $r\in \{s, t\}$: W.l.o.g. we assume that $r=s$. Then
		\begin{align*}
			\omega_B(g, (u_1, \ldots, u_m, w_1, \ldots, w_k)) = (gu_1, u_2, \ldots, u_m, w_1, \ldots, w_k).
		\end{align*}
		Let $v_1', \ldots, v_m'$ be such that $\omega_B(g, (v_1, \ldots, v_k)) = (v_1', \ldots, v_k')$. By Lemma \ref{Lemma: action stops}$(a)$ and Corollary \ref{Homotopyinrank2} we deduce $(gu_1, u_2, \ldots, u_k) \simeq (v_1', \ldots, v_k')$. Now there are two possibilities:
		\begin{enumerate}
			\item $\omega_B(g, (v_1, \ldots, v_m, w_1, \ldots, w_k)) = (v_1', \ldots, v_m', w_1, \ldots, w_k)$, i.e. the action stops after at most $m$ steps. Then the claim follows directly.
			
			\item $\omega_B(g, (v_1, \ldots, v_k, w_1, \ldots, w_k)) = (v_1', \ldots, v_m', \omega_B(g', w_1, \ldots, w_k))$: Then by Lemma \ref{Lemma: action stops}$(b)$ we deduce $g' = (v_1' n_1' \cdots v_m' n_m')^{-1} g (v_1 n_1' \cdots v_m n_m')$. But then we infer the following in $G_{\{s, t\}}$:
			\begin{align*}
				g' &= (v_1' n_1' \cdots v_m' n_m')^{-1} g (v_1 n_1' \cdots v_m n_m') \\
				&= (gu_1 n_1 u_2 n_2 \cdots u_m n_m)^{-1} g (u_1 n_1 \cdots u_k n_k) \\
				&= 1
			\end{align*}
			Thus $\omega_B(g', (w_1, \ldots, w_k)) = (w_1, \ldots, w_k)$ and the claim follows.
		\end{enumerate}
		
		\item $r \notin \{s, t\}$: Let $\omega_B(g, (u_1, \ldots, u_k)) = (u_1', \ldots, u_k')$ and $\omega_B(g, (v_1, \ldots, v_k)) = (v_1', \ldots, v_k')$ be. Then Lemma \ref{Lemma: action stops}$(a)$ implies
		\allowdisplaybreaks
		\begin{align*}
			g \cdot u_1 n_1 \dots u_m n_m B_{\{r, s, t\}} = u_1' n_1 \cdots u_n' n_m B_{\{r, s, t\}} \\
			g \cdot v_1 n_1' \cdots v_m n_m' B_{\{r, s, t\}} = v_1' n_1' \cdots v_m' n_m' B_{\{r, s, t\}}
		\end{align*}
		In particular, we deduce that
		\allowdisplaybreaks
		\begin{align*}
			g' &:= \left( u_1' n_1 \cdots u_n' n_m \right)^{-1} g u_1 n_1 \dots u_m n_m \in B_{\{r, s, t\}} \\
			g'' &:= \left( v_1' n_1' \cdots v_m' n_m' \right)^{-1} g v_1 n_1' \cdots v_m n_m' \in B_{\{r, s, t\}}
		\end{align*}
		Assume that $\omega_B(g, (u_1, \ldots, u_m, w_1, \ldots, w_k)) = (u_1', \ldots, u_m', w_1, \ldots, w_k)$, i.e. the action stops after at most $m$ steps. Then $gu_1 n_1 \cdots u_mn_m = u_1' n_1 \cdots u_m' n_m$ and $g'$ is a relation in $G_{\{r, s, t\}}$. We will show that this is a contradiction. Let $u, u' \in \langle U_{\alpha_s} \cup U_{\alpha_t} \rangle$ be such that $u_1 n_1 \cdots u_m n_m = u n(w)$ and $u_1'n_1 \cdots u_m' n_m = u' n(w)$ as in \cite[Lemma $(7.4)$]{Ro89}. Then $(u')^{-1}g u$ is a relation as well and we deduce $U_{\alpha_r} \cap \langle U_{\alpha_s} \cup U_{\alpha_t} \rangle \neq \{1\}$, which is a contradiction. Thus $g'$ cannot be a relation. Moreover, we have $u_1 n_1 \cdots u_m n_m = v_1 n_1' \cdots v_m n_m'$, as $(u_1, \ldots, u_m) \simeq (v_1, \ldots, v_m)$, and $u_1' n_1 \cdots u_m' n_m = v_1' n_1' \cdots v_m' n_m'$, as $(u_1', \ldots, u_m') \simeq (v_1', \ldots, v_m')$ by Corollary \ref{Homotopyinrank2}. In particular, we have $g' = g''$ in $B_{\{r, s, t\}}$ and $g''$ is no relation as well. In particular, the action does not stop after at most $m$ steps. Together with Theorem \ref{Theorem: 2-amalgam} we obtain the following:
		\allowdisplaybreaks
		\begin{align*}
			\omega_B(g, (u_1, \ldots, u_m, w_1, \ldots, w_k)) &= (u_1', \ldots, u_m', \omega_B(g', (w_1, \ldots, w_m))) \\
			&\simeq (v_1', \ldots, v_m', \omega_B(g'', (w_1, \ldots, w_m))) \\
			&= \omega_B(g, (v_1, \ldots, v_m, w_1, \ldots, w_k)). \qedhere
		\end{align*}
	\end{enumerate}
\end{proof}

\begin{comment}
	\begin{lemma}
		Let $s, s_1, \ldots, s_k, t_2, \ldots, t_k, r_2, \ldots, r_k \in S$ be such that $s_1 \cdots s_k$ is reduced, $\ell(ss_1 \cdots s_k) = k-1$ and $st_2 \cdots t_k = s_1 \cdots s_k = sr_2 \cdots r_k$. Let $u_i \in U_{s_i}, v_1, w_1 \in U_s$ and $v_i \in U_{t_i}, w_i \in U_{r_i}$ be such that $(v_1, \ldots, v_k) \simeq (u_1, \ldots, u_k) \simeq (w_1, \ldots, w_k)$. Then $v_1 = w_1$ and $(v_2, \ldots, v_k) \simeq (w_2, \ldots, w_k)$.
	\end{lemma}
	\begin{proof}
		As $t_2 \cdots t_k = r_2 \cdots r_k$, there exist $v_i' \in U_{t_i}$ such that $(w_2, \ldots, w_k) \simeq (v_2', \ldots, v_k')$ and hence $(v_1, \ldots, v_k) \simeq (w_1, w_2, \ldots, w_k) \simeq (w_1, v_2', \ldots, v_k')$. By Corollary \ref{Corollary: Ro89Theorem7.1+7.2} we have $v_1 = w_1$ and hence $(v_2, \ldots, v_k) \simeq (w_2, \ldots, w_k)$.
	\end{proof}
\end{comment}

\begin{lemma}
	Let $s_1, \ldots, s_k \in S$ be such that $s_1 \cdots s_k$ is reduced and let $u_i \in U_{s_i}$. Then we have for each $s\in S$ a well-defined mapping 
	\[ \omega(n_s, \cdot): \Cbf(\mathcal{B}_{\mathcal{F}}) \to \Cbf_{\mathcal{B}_{\mathcal{F}}}, (u_1, \ldots, u_k) \mapsto \begin{cases}
		\omega_B(n_{s_1}^2, [u_2, \ldots, u_k]) & \text{if } s=s_1, u_1 = 1, \\
		[\overline{u}_1, \omega_B(b(u_1), [u_2, \ldots, u_k])] & \text{if } s=s_1, u_1 \neq 1, \\
		[1_{U_s}, u_1, \ldots, u_k] & \text{if } \ell(ss_1 \cdots s_k) = k+1 \\
		\omega( g, (v_1, \ldots, v_k) ) & \text{else,}
	\end{cases} \]
	where $(v_1, \ldots, v_k)$ is of type $(s, t_2, \ldots, t_k), st_2 \cdots t_k = s_1 \cdots s_k$ and $(u_1, \ldots, u_k) \simeq (v_1, \ldots, v_k)$. Moreover, this mapping extends to $\Cbf_{\mathcal{B}_{\mathcal{F}}}$. More precisely, for any two equivalent sequences $(u_1, \ldots, u_k) \simeq (v_1, \ldots, v_k)$ we have $\omega(n_s, (u_1, \ldots, u_k)) = \omega(n_s, (v_1, \ldots, v_k))$.
\end{lemma}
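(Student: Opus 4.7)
The plan is to prove the two statements of the lemma in turn: well-definedness of $\omega(n_s, \cdot)$ on the set of sequences $\Cbf(\mathcal{B}_{\mathcal{F}})$, then the extension to equivalence classes. The first three cases of the formula produce an image without any choice, so the only genuine ambiguity lies in the ``else'' case, where $s \neq s_1$ and $\ell(ss_1 \cdots s_k) = k-1$; there one picks a representative $(v_1, \ldots, v_k)$ of $[u_1, \ldots, u_k]$ of type starting with $s$, and I must show that any two such choices produce the same chamber in $\Cbf_{\mathcal{B}_{\mathcal{F}}}$.

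The first step is to show $v_1 = w_1$ whenever $(v_1, \ldots, v_k)$ and $(w_1, \ldots, w_k)$ are two representatives of the same equivalence class with types beginning by $s$. Since $\mathcal{B}_{\mathcal{F}}$ is realisable (by the preceding theorem together with Theorem \ref{Ro89Theorem7.1+7.2}), $\Cbf_{\mathcal{B}_{\mathcal{F}}}$ is a building, so projections onto panels are uniquely defined. Both sequences yield galleries from $[\emptyset]$ whose second chamber is $\proj_{\P_s([\emptyset])}[u_1, \ldots, u_k]$, so their first entries — the $s$-labels of this second chamber — coincide. Consequently $\overline{v}_1 = \overline{w}_1$ and $b(v_1) = b(w_1)$, and cases $1$ and $2$ apply identically to both representatives.

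Next I would prove $(v_2, \ldots, v_k) \simeq (w_2, \ldots, w_k)$ in $\Cbf(\mathcal{B}_{\mathcal{F}})$, from which Lemma \ref{Lemma: BJ homotopic sequences to homotopic sequences} implies
\[
  \omega_B(b(v_1), [v_2, \ldots, v_k]) = \omega_B(b(w_1), [w_2, \ldots, w_k])
\]
and hence the two applications of case $2$ agree (the subcase $v_1 = 1 = w_1$ is identical with $n_s^2$ in place of $b(v_1)$). The tail equivalence is obtained by rerouting any chain of elementary equivalences from $(v_1, v_2, \ldots, v_k)$ to $(w_1, w_2, \ldots, w_k)$: elementary equivalences acting at position $1$ are exactly those that change the first entry, and since $v_1 = w_1$ they may be paired off and replaced by moves at positions $\geq 2$, yielding an equivalence chain that descends to the tails.

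Finally, for the extension to $\Cbf_{\mathcal{B}_{\mathcal{F}}}$: given $\bar{u} \simeq \bar{v}$, the underlying Coxeter word of both sequences is the same, so the dichotomy $\ell(s s_1 \cdots s_k) = k \pm 1$ chooses the same case for both. In case $3$ one simply prepends $1_{U_s}$ and the original equivalence chain transfers verbatim. In the other cases one passes to common representatives of type starting with $s$ and invokes the well-definedness established above. The main obstacle is the rerouting argument in the third paragraph: showing that equivalences between sequences sharing the same first entry can always be realised without elementary equivalences at position $1$. This is the combinatorial crux and leans on the realisability of $\mathcal{B}_{\mathcal{F}}$ on every spherical rank-$3$ subdiagram already established.
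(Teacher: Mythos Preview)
Your overall architecture is right, and your geometric argument for $v_1 = w_1$ via projections is correct. The gap is in the third paragraph: the ``rerouting'' argument for $(v_2,\ldots,v_k)\simeq(w_2,\ldots,w_k)$ is asserted but not carried out, and you yourself flag it as the crux. Pairing off elementary equivalences at position~$1$ is not an automatic operation, and the sketch does not make clear why realisability on rank-$3$ subdiagrams suffices to perform it.

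The paper avoids this obstacle entirely by exploiting the full strength of Corollary~\ref{Corollary: Ro89Theorem7.1+7.2}: since $\mathcal{B}_{\mathcal{F}}$ is realisable, any two \emph{equivalent} sequences of the \emph{same} reduced type are in fact \emph{equal}. Concretely, given representatives $(v_1,\ldots,v_k)$ of type $(s,t_2,\ldots,t_k)$ and $(w_1,\ldots,w_k)$ of type $(s,r_2,\ldots,r_k)$, one first rewrites $(w_2,\ldots,w_k)$ to type $(t_2,\ldots,t_k)$, obtaining $(v_2',\ldots,v_k')$ with $(w_2,\ldots,w_k)\simeq(v_2',\ldots,v_k')$. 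Then $(v_1,\ldots,v_k)\simeq(w_1,v_2',\ldots,v_k')$ are of the same type, hence equal; this gives $v_1=w_1$ and $(v_2,\ldots,v_k)=(v_2',\ldots,v_k')\simeq(w_2,\ldots,w_k)$ in one stroke. Lemma~\ref{Lemma: BJ homotopic sequences to homotopic sequences} then finishes both the well-definedness and the extension, exactly as you intended. So the missing idea is simply to use ``realisable $\Rightarrow$ same type and equivalent $\Rightarrow$ equal'' rather than attempting to rearrange equivalence chains.
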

\begin{proof}
	To show that $\omega(n_s, \cdot)$ is well-defined, we have to show that the case $\ell(ss_1 \cdots s_k) = k-1$ and $s \neq s_1$ is independent of the choice of $(v_1, \ldots, v_k)$. Therefore, we let $r_2, \ldots, r_k \in S, w_1 \in U_s$ and $w_i \in U_{r_i}$ be such that $s t_2 \cdots t_k = s_1 \cdots s_k = sr_2 \cdots r_k$ and
	\[ (v_1, \ldots, v_k) \simeq (u_1, \ldots, u_k) \simeq (w_1, \ldots, w_k) \]
	As $t_2 \cdots t_k = r_2 \cdots r_k$, there exist $v_i' \in U_{t_i}$ such that $(w_2, \ldots, w_k) \simeq (v_2', \ldots, v_k')$ and hence $(w_1, \ldots, w_k) \simeq (w_1, v_2', \ldots, v_k')$.	By Corollary \ref{Corollary: Ro89Theorem7.1+7.2} we have $v_1 = w_1, v_i = v_i'$ and hence $(v_2, \ldots, v_k) \simeq (w_2, \ldots, w_k)$. Using Lemma \ref{Lemma: BJ homotopic sequences to homotopic sequences} we deduce
	\begin{align*}
		\omega(n_s, (v_1, \ldots, v_k)) &= \begin{cases}
			\omega_B( n_{s_1}^2, [v_2, \ldots, v_k] ) & \text{if } v_1 = 1 \\
			[\overline{v}_1, \omega_B(b(v_1), [v_2, \ldots, v_k])] & \text{if } v_1 \neq 1
		\end{cases} \\
		&= \begin{cases}
			\omega_B( n_{s_1}^2, [w_2, \ldots, w_k] ) & \text{if } w_1 = 1 \\
			[\overline{w}_1, \omega_B(b(w_1), [w_2, \ldots, w_k])] & \text{if } w_1 \neq 1
		\end{cases} \\
		&= \omega(n_s, (w_1, \ldots, w_k))
	\end{align*}
	Thus the mapping is well-defined. Suppose $s_1, \ldots, s_k, t_1, \ldots, t_k \in S$ such that $s_1 \cdots s_k = t_1 \cdots t_k$ is reduced. Let $u_i \in U_{s_i}, v_i \in U_{t_i}$ be such that $(u_1, \ldots, u_k) \simeq (v_1, \ldots, v_k)$. If $\ell(ss_1 \cdots s_k) = k+1$, we have $(1, u_1, \ldots, u_k) \simeq (1, v_1, \ldots, v_k)$ and hence $[1, u_1, \ldots, u_k] = [1, v_1, \ldots, v_k]$. Now we assume $\ell(ss_1 \cdots s_k) = k-1$. Then there exist $s_2', \ldots, s_k', t_2', \ldots, t_k' \in S$ such that $s_1 \cdots s_k = ss_2' \cdots s_k'$ and $t_1 \cdots t_k = st_2' \cdots t_k'$. Let $u_1', v_1' \in U_s, u_i' \in U_{s_i'}, v_i' \in U_{t_i'}$ be such that $(u_1', \ldots, u_k') \simeq (u_1, \ldots, u_k) \simeq (v_1, \ldots, v_k) \simeq (v_1', \ldots, v_k')$. As before we deduce $u_1' = v_1'$ and $(u_2', \ldots, u_k') \simeq (v_2', \ldots, v_k')$. The claim follows now from Lemma \ref{Lemma: BJ homotopic sequences to homotopic sequences}.
\end{proof}

\begin{lemma}\label{Lemma: ns automorphism}
	For each $s\in S$ we have $n_s \in \Aut(\Cbf_{\mathcal{B}_{\mathcal{F}}})$ via
		\begin{align*}
		\omega(g, [u_1, \ldots, u_k]) &:= \begin{cases}
			\omega(n_s, \omega_B(n_s^{-2}, [u_1, \ldots, u_k])) & \text{if } g = n_s^{-1} \\
			\omega(n_s, [u_1, \ldots, u_k]) & \text{if } g = n_s
		\end{cases}
	\end{align*}
\end{lemma}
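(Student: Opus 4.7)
The plan is to verify three things: (a) that $\omega(n_s^{-1}, \cdot)$ is a well-defined map on $\Cbf_{\mathcal{B}_{\mathcal{F}}}$, (b) that both $\omega(n_s, \cdot)$ and $\omega(n_s^{-1}, \cdot)$ preserve $t$-adjacency for every $t\in S$, and (c) that the two maps are mutually inverse. Well-definedness of $\omega(n_s, \cdot)$ on equivalence classes was established in the previous lemma. Since $n_s^2 = m(e_s)m(e_s) \in H_s \subseteq B_{\{s\}}$, the inverse $n_s^{-2}$ lies in $H_s$ as well, so Lemma \ref{Lemma: BJ homotopic sequences to homotopic sequences} implies $\omega_B(n_s^{-2}, \cdot)$ descends to $\Cbf_{\mathcal{B}_{\mathcal{F}}}$, and composing with $\omega(n_s, \cdot)$ gives a well-defined map $\omega(n_s^{-1}, \cdot)$.

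For adjacency preservation the strategy mirrors the one used in Lemma \ref{Lemma: BJ homotopic sequences to homotopic sequences}. Given two $t$-adjacent chambers, choose representatives $\bar u = (u_1, \ldots, u_k)$ and $\bar v = (v_1, \ldots, v_l)$ that differ only in an appended or modified $U_t$-component at the end. The value $\omega(n_s, [\bar u])$ splits into four cases according to whether $s=s_1$ (with $u_1=1$ or $u_1 \neq 1$), $\ell(ss_1 \cdots s_k) = k+1$, or neither. In the first three cases the verification is a direct computation using the relation $n_s u_s n_s = \overline u_s n_s b(u_s)$ and the fact that $\omega_B$ is a $B_{\{s,t,s_1\}}$-action, exploited as in Theorem \ref{Theorem: 2-amalgam}; in the fourth case we first replace $[\bar u]$ by an equivalent sequence of type $(s, t_2, \ldots, t_k)$ by Corollary \ref{Corollary: Ro89Theorem7.1+7.2} and reduce to one of the previous cases. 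The preservation of adjacency for $\omega(n_s^{-1}, \cdot)$ then follows because it is a composition of $\omega_B(n_s^{-2}, \cdot)$ (which preserves adjacency by Lemma \ref{Lemma: BJ homotopic sequences to homotopic sequences}) and $\omega(n_s, \cdot)$.

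For the inverse property the key identity to establish is
\[
\omega(n_s, \omega(n_s, [\bar u])) = \omega_B(n_s^2, [\bar u]) \qquad \text{for every } [\bar u] \in \Cbf_{\mathcal{B}_{\mathcal{F}}}.
\]
This is proved by induction on the length of $\bar u$, with a case split on whether $s = s_1$ and whether $u_1$ is trivial. When $s = s_1$ and $u_1 \neq 1$ one applies the defining formula twice and uses the identity $n_s \overline u_1 n_s = n_s^2 u_1 n_s b(u_1)^{-1}$ (a rewriting of $n_s u_1 n_s = \overline u_1 n_s b(u_1)$) to match both sides; the remaining cases are analogous and simpler. Given this identity, since $n_s^2 \in H_s$ and $\omega_B$ is a $B_{\{s\}}$-action we obtain
\[
\omega(n_s, \omega(n_s^{-1}, [\bar u])) = \omega(n_s, \omega(n_s, \omega_B(n_s^{-2}, [\bar u]))) = \omega_B(n_s^2 \cdot n_s^{-2}, [\bar u]) = [\bar u],
\]
and symmetrically for the reverse composition, so $\omega(n_s, \cdot)$ and $\omega(n_s^{-1}, \cdot)$ are mutually inverse automorphisms of $\Cbf_{\mathcal{B}_{\mathcal{F}}}$.

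The main obstacle will be the bookkeeping in step (b): the four cases in the definition of $\omega(n_s, \cdot)$ interact differently with each of the possible cases for $\omega(n_s, \cdot)$ on a $t$-adjacent chamber, and one must verify that the relations needed (essentially those appearing in the presentation of Theorem \ref{presentationofanRGDsystem}) are available from $B_{\{s,t,s_1\}}$ via the $2$-amalgam structure of Lemma \ref{Uplus2amalgam}, whose applicability ultimately depends on the integrability and Condition $\lsco$ assumed for the rank $3$ restrictions of $\mathcal{F}$.
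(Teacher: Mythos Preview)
Your proposal is correct, and its core computation is essentially the same four-case analysis the paper carries out, but the two proofs are organised differently.

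The paper's proof is shorter: it only verifies $\omega(n_s^{-1},\cdot)\circ\omega(n_s,\cdot)=\mathrm{id}$ by a direct case split on the four cases of the previous lemma. This single identity already forces $\omega(n_s,\cdot)$ to be a bijection, because by definition $\omega(n_s^{-1},\cdot)=\omega(n_s,\cdot)\circ\omega_B(n_s^{-2},\cdot)$ with $\omega_B(n_s^{-2},\cdot)$ a known bijection; the relation $\omega(n_s,\cdot)\circ\omega_B(n_s^{-2},\cdot)\circ\omega(n_s,\cdot)=\mathrm{id}$ then exhibits both a left and a right inverse for $\omega(n_s,\cdot)$. You instead aim for the identity $\omega(n_s,\cdot)^2=\omega_B(n_s^2,\cdot)$ and derive both compositions from it. These two identities are equivalent (each yields the other immediately using the definition of $\omega(n_s^{-1},\cdot)$), and checking either one amounts to the same algebra in $X_s$; in particular your case $s=s_1$, $u_1\neq 1$ reduces to the same identity $\overline{\overline{u}_1}=u_1^{n_s^{-2}}$ and $b(\overline{u}_1)b(u_1)=n_s^2$ that the paper extracts. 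Note that no induction on the length of $\bar u$ is actually required: each of the four cases resolves after one further application of the defining formula.

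Your step (b), the explicit verification that $\omega(n_s,\cdot)$ preserves $t$-adjacency, is additional work the paper's proof omits entirely. The paper's formulation ``$n_s\in\Aut(\Cbf_{\mathcal{B}_{\mathcal{F}}})$'' would strictly require this, but in the subsequent applications (in particular in Theorem~\ref{Theorem: rank 3 intrgrable implies RGD-system}) only the set-theoretic action on chambers is used to verify (RGD3) and the injectivity of $U_{\pm s}\to G$, so the paper's economy is harmless for its purposes. Your inclusion of (b) makes the statement literally correct, but it is not the heart of the argument.
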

\begin{proof}
	It suffices to show that $n_s^{-1} n_s$ acts trivial on every sequence. Let $s_1, \ldots, s_k \in S$ be such that $s_1 \cdots s_k$ is reduced and let $u_i \in U_{s_i}$. We distinguish the following cases:
	\begin{enumerate}[label=(\roman*)]
		\item $s=s_1$ and $u_1 = 1$: Then $\omega(n_s^{-1} n_s, [u_1, \ldots, u_k]) = [1, \omega_B( n_s^{-2} n_s^2, [u_2, \ldots, u_k] )] =[u_1, \ldots, u_k]$.
		
		\item $s=s_1$ and $u_1 \neq 1$: Then
		\begin{align*}
			\omega(n_s^{-1} n_s, [u_1, \ldots, u_k]) &= \omega( n_s^{-1}, [\overline{u}_1, \omega_B(b(u_1), [u_2, \ldots, u_k])] ) \\
			&= \omega(n_s, [\overline{u}_1^{n_s^2}, \omega_B( (n_s^{-2})^{n_s} b(u_1), [u_2, \ldots, u_k] )]) \\
			&= [ \overline{\overline{u}_1^{n_s^{2}}}, \omega_B( b(\overline{u}_1^{n_s^{2}}) (n_s^{-2})^{n_s} b(u_1) ), [u_2, \ldots, u_k] ]
		\end{align*} 
		Note that $u_1 n_s B_{\{s\}} = n_s^{-1} n_s u_1 n_s B_{\{s\}} = n_s n_s^{-2} \overline{u}_1 n_s b(u_1) B_{\{s\}} = n_s \overline{u}_1^{n_s^2} n_s B_{\{s\}} = \overline{\overline{u}_1^{n_s^2}} n_s B_{\{s\}}$. Thus $u_1 = \overline{\overline{u}_1^{n_s^{2}}}$ and $b(\overline{u}_1^{n_s^{2}}) (n_s^{-2})^{n_s}b(u_1)$ is a relation in $B_{\{s\}}$. The claim follows now from Theorem \ref{Theorem: 2-amalgam}.
		
		\item $\ell(s s_1 \cdots s_k) = k+1$: Then $\omega(n_s^{-1} n_s, [u_1, \ldots, u_k]) = \omega_B( n_s^{2} (n_s^{-2})^{n_s}, [u_1, \ldots, u_k] ) = [u_1, \ldots, u_k]$.
		
		\item $\ell(ss_1 \cdots s_k) = k-1$ and $s\neq s_1$: Let $(v_1, \ldots, v_k) \in [u_1, \ldots, u_k]$ with $v_1 \in U_s$. Then $\omega(n_s^{-1} n_s, [v_1, \ldots, v_k]) = [v_1, \ldots, v_k]$ as before and hence $\omega(n_s^{-1} n_s, [u_1, \ldots, u_k]) = [v_1, \ldots, v_k] = [u_1, \ldots, u_k]$. \qedhere
	\end{enumerate}
\end{proof}

\subsection*{Some relations in $\Aut(\Cbf_{\mathcal{B}_{\mathcal{F}}})$}

In this subsection we will show that the relations in Theorem \ref{presentationofanRGDsystem} act trivial on $\Cbf_{\mathcal{B}_{\mathcal{F}}}$. This will imply that $G$ acts on the building $\Cbf_{\mathcal{B}_{\mathcal{F}}}$.

\begin{lemma}\label{Psacts}
	For $s\in S$ and $1 \neq u_s \in U_s$ We have $n_s u_s n_s = \overline{u}_s n_s b(u_s)$ in $\Aut(\Cbf_{\mathcal{B}_{\mathcal{F}}})$.
\end{lemma}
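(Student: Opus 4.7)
The plan is to prove the identity $n_s u_s n_s = \overline{u}_s n_s b(u_s)$ in $\Aut(\Cbf_{\mathcal{B}_{\mathcal{F}}})$ by checking that both sides act identically on an arbitrary chamber $x = [u_1, \ldots, u_k]$ of reduced type $(s_1, \ldots, s_k)$, i.e.\ that
\[ \omega(n_s, \omega_B(u_s, \omega(n_s, x))) = \omega_B(\overline{u}_s, \omega(n_s, \omega_B(b(u_s), x))). \]
I will do this by case analysis on how $s$ relates to the type $(s_1, \ldots, s_k)$, in each case unwinding the defining formulas of $\omega(n_s, \cdot)$ and $\omega_B$ and appealing to Theorem \ref{Theorem: 2-amalgam} together with the identity $n_s u_s n_s = \overline{u}_s n_s b(u_s)$ which holds in $X_s$ by construction.

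First I will handle the elongating case $\ell(ss_1\cdots s_k) = k+1$. Here the LHS unwinds as $x \mapsto [1, u_1, \ldots, u_k] \mapsto [u_s, u_1, \ldots, u_k] \mapsto [\overline{u}_s, \omega_B(b(u_s), [u_1, \ldots, u_k])]$, where the last step uses $u_s \neq 1$ and the second branch of the formula for $\omega(n_s, \cdot)$. Since $b(u_s) \in B_{\{s\}}$ acts via $\omega_B$, it preserves the type, hence $\ell(ss_1\cdots s_k) = k+1$ still applies on the RHS, and the same two elementary steps produce the identical answer. The remaining case $\ell(ss_1\cdots s_k) = k-1$ is reduced by Corollary \ref{Corollary: Ro89Theorem7.1+7.2} and the lemma extending $\omega(n_s, \cdot)$ to $\Cbf_{\mathcal{B}_{\mathcal{F}}}$: I may replace $x$ by a representative $[v_1, \ldots, v_k]$ of type $(s, t_2, \ldots, t_k)$ with $v_1 \in U_s$.

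In this second case I split further according to $v_1 = 1$ or $v_1 \neq 1$. When $v_1 = 1$, the formula $\omega(n_s, [1, v_2, \ldots, v_k]) = \omega_B(n_s^2, [v_2, \ldots, v_k])$ lets me reduce one layer; applying $u_s \in U_s$ via the third branch of Theorem \ref{Theorem: 2-amalgam} (since $s \neq t_2$ because the type is reduced), and then applying $n_s$ to the resulting chamber of type $(t_2, \ldots, t_k)$, which lies in Case 1, I obtain the LHS, and the corresponding unwinding of the RHS matches because $n_s^2 \cdot u_s \cdot n_s^2$ and $\overline{u}_s n_s b(u_s) n_s$ differ by a relation that sits inside $B_{\{s,t_2\}}$ by Theorem \ref{Theorem: 2-amalgam}. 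When $v_1 \neq 1$, I compute $\omega(u_s, \omega(n_s, x)) = [u_s\overline{v}_1, \omega_B(b(v_1), [v_2, \ldots, v_k])]$, and then distinguish $u_s\overline{v}_1 = 1$ from $u_s\overline{v}_1 \neq 1$; in each sub-subcase the resulting $b$-factor on the LHS is compared with the $b$-factor on the RHS via the $X_s$-identity applied to each of $u_s$, $v_1$, and $u_s\overline{v}_1$ (equivalently, via the decomposition $n_s u_s v_1 n_s = n_s u_s n_s \cdot n_s^{-1} v_1 n_s$), and the resulting relation among elements of $B_{\{s\}}$ is then transported through $[v_2, \ldots, v_k]$ using Theorem \ref{Theorem: 2-amalgam}.

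The main obstacle will be the subcase $v_1 \neq 1$ of the length-decreasing case: three applications of the $X_s$-relation interact and one must verify that the resulting bookkeeping of $\overline{(\cdot)}$-terms and $b(\cdot)$-terms (namely $\overline{u_s\overline{v}_1}$ on the LHS versus the sequence of $\overline{u}_s$ and $b(u_s)$-conjugated operations on the RHS) genuinely combine in $B_{\{s\}}$ to the same element, so that Theorem \ref{Theorem: 2-amalgam} yields the same chamber after being pushed through the tail $[v_2, \ldots, v_k]$. All other cases are essentially direct unwinding of definitions.
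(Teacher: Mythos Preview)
Your proposal is correct and follows essentially the same route as the paper's proof: the same case split on $\ell(ss_1\cdots s_k)$, the same reduction to $s_1=s$ in the decreasing case, the same subcases $v_1=1$ versus $v_1\neq 1$, and in the latter the same further split on whether $u_s\overline{v}_1$ vanishes, all resolved by comparing the resulting $B_{\{s\}}$-tails via the $X_s$-identity and invoking Theorem~\ref{Theorem: 2-amalgam}.

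Two small points of imprecision are worth flagging. In the $v_1=1$ subcase, the element you call ``$n_s^2\cdot u_s\cdot n_s^2$'' does not arise; the actual comparison (as the paper makes explicit) is between $u_s n_s^2$ on the LHS tail and $b(v_s)h_s^{n_s}$ on the RHS tail (writing $b(u_s)=v_s h_s$), together with the separate equality $\overline{u}_s\overline{v}_s=1$ in the first slot---both of which are verified in $X_s$, not in $B_{\{s,t_2\}}$. In the $v_1\neq 1$ subcase, the useful rewriting is not $n_s u_s v_1 n_s = n_s u_s n_s\cdot n_s^{-1} v_1 n_s$ (which lands you in $U_{-s}$) but rather the chain $n_s u_s\overline{v}_1 n_s\, b(v_1)=n_s u_s n_s\, v_1 n_s=\overline{u}_s n_s\, b(u_s)\, v_1 n_s$, which is what forces $u_s\overline{v}_1=1\iff v_s v_1^{h_s^{-1}}=1$ and matches the remaining $B_{\{s\}}$-factors. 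These are bookkeeping corrections; your plan is sound.
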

\begin{proof}
	Let $(u_1, \ldots, u_k)$ be a sequence of type $(s_1, \ldots, s_k)$. We distinguish the following two cases:
	\begin{enumerate}[label=(\alph*)]
		\item $\ell(ss_1 \cdots s_k) = k+1$. Then we have
		\begin{align*}
			\omega( n_s u_s n_s, [u_1, \ldots, u_k] ) = [\overline{u}_s, \omega_B( b(u_s), [u_1, \ldots, u_k] )] = \omega( \overline{u}_s n_s b(u_s), [u_1, \ldots, u_k] )
		\end{align*}
	
		\item $\ell(ss_1 \cdots s_k) = k-1$: W.l.o.g. we assume $s = s_1$. We let $b(u_s) = v_s h_s$ and distinguish the following cases:
		\begin{enumerate}[label=(\roman*)]
			\item $u_1 = 1$: Then we compute the following:
			\begin{align*}
				\omega(n_s u_s n_s, [1, u_2, \ldots, u_k]) &= [1, \omega_B(u_s n_s^2, [u_2, \ldots, u_k])] \\
				\omega(\overline{u}_s n_s b(u_s), [1, u_2, \ldots, u_k]) &= [ \overline{u}_s \overline{v}_s, \omega_B( b(v_s) h_s^{n_s}, [u_2, \ldots, u_k] ) ]
			\end{align*}
			Comparing the cosets in the Moufang building we obtain $\overline{u}_s \overline{v}_s = 1$. Moreover, we have the following:
			\begin{align*}
				b(v_s) h_s^{n_s} = n_s^{-1} \overline{v}_s^{-1} \overline{v}_s n_s b(v_s) n_s^{-1} h_s n_s = n_s^{-1} \overline{v}_s^{-1} \overline{u}_s^{-1} \overline{u}_s n_s v_s n_s n_s^{-1} h_s n_s = n_s^{-1} n_s u_s n_s n_s = u_s n_s^2
			\end{align*}
			Thus $( u_s n_s^2 )^{-1} b(v_s) h_s^{n_s}$ is a relation in $B_{\{s\}}$ and the claim follows.
			
			\item $u_1 \neq 1$: Then we compute the following:
			\begin{align*}
				\omega(n_s u_s n_s, [u_1, \ldots, u_k]) &= \begin{cases}
					\omega_B( n_s^2 b(u_1), [u_2, \ldots, u_k] ) & \text{if } u_s\overline{u}_1 = 1 \\
					[ \overline{u_s \overline{u}_1}, \omega_B( b(u_s \overline{u}_1) b(u_1), [u_2, \ldots, u_k] ) ] & \text{if } u_s \overline{u}_1 \neq 1
				\end{cases} \\
			\omega(\overline{u}_s n_s b(u_s), [u_1, \ldots, u_k]) &= \begin{cases}
				\omega_B( \overline{u}_s n_s^2 h_s^{n_s}, [u_2, \ldots, u_k] ) & \text{if } v_s u_1^{h_s^{-1}} = 1 \\
				[ \overline{u}_s \overline{v_s u_1^{h_s^{-1}}}, \omega_B( b(v_s u_1^{h_s^{-1}}) h_s^{n_s}, [u_2, \ldots, u_k] ) ] & \text{if } v_s u_1^{h_s^{-1}} \neq 1
			\end{cases}
			\end{align*}
			Note that in $X_s$ we have $n_s u_s \overline{u}_1 n_s b(u_1) = n_s u_s n_s u_1 n_s = \overline{u}_s n_s b(u_s) u_1 n_s = \overline{u}_s n_s v_s u_1^{h_s^{-1}} n_s h_s^{n_s}$. If $u_s \overline{u}_1 = 1$, then the left hand side is contained in $B_{\{s\}}$. If $v_s u_1^{h_s^{-1}} \neq 1$, then $U_{-\alpha_s} \cap B_{\{s\}} \neq \{1\}$, which is a contradiction. Thus $u_s \overline{u}_1 = 1$ implies $v_s u_1^{h_s^{-1}} = 1$. On the other hand if $v_s u_1^{h_s^{-1}} = 1$, then the right hand side is contained in $B_{\{s\}}$. If $u_s \overline{u}_1 \neq 1$, then again $U_{-\alpha_s} \cap B_{\{s\}} \neq \{1\}$, which is a contradiction. Thus $v_s u_1^{h_s^{-1}} = 1$ implies $u_s \overline{u}_1 = 1$.
			
			Assume that $u_s \overline{u}_1 = 1 = v_s u_1^{h_s^{-1}}$. Then $n_s^2 b(u_1) = \overline{u}_s n_s^2 h_s^{n_s}$ in $X_s$ and hence in $B_{\{s\}}$. Thus we are done. Now we assume that $u_s \overline{u}_1 \neq 1 \neq v_s u_1^{h_s^{-1}}$. Note that in $X_s$ we have the following:
			\begin{align*}
				\overline{u_s \overline{u}_1} n_s b(u_s \overline{u}_1) b(u_1) &= n_s u_s \overline{u}_1 n_s b(u_1) \\
				&= n_s u_s n_s u_1 n_s \\
				&= \overline{u}_s n_s b(u_s) u_1 n_s \\
				&= \overline{u}_s n_s v_s u_1^{h_s^{-1}} n_s h_s^{n_s} \\
				&= \overline{u}_s \overline{v_s u_1^{h_s^{-1}}} n_s b(v_s u_1^{h_s^{-1}}) h_s^{n_s}
			\end{align*}
			This implies $\overline{u_s \overline{u}_1} = \overline{u}_s \overline{v_s u_1^{h_s^{-1}}}$ and hence $b(u_s \overline{u}_1) b(u_1) = b(v_s u_1^{h_s^{-1}}) h_s^{n_s}$. Thus we are done. \qedhere
		\end{enumerate}
	\end{enumerate}
\end{proof}

\begin{lemma}
	For $s, t \in S$ and $1 \neq h \in H_t$ we have $n_s h n_s = n_s^2 h^{n_s}$ in $\Aut(\Cbf_{\mathcal{B}_{\mathcal{F}}})$.
\end{lemma}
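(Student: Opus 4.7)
Since $h^{n_s} := n_s^{-1} h n_s$, the identity $n_s h n_s = n_s^2 h^{n_s}$ is a tautology as an equation in a group. The content of the lemma is that this tautology descends to the automorphism group of $\Cbf_{\mathcal{B}_{\mathcal{F}}}$: the composite $[u_1, \ldots, u_k] \mapsto \omega(n_s, \omega_B(h, \omega(n_s, [u_1, \ldots, u_k])))$ must coincide with $\omega_B(n_s^2 h^{n_s}, [u_1, \ldots, u_k])$. Note that $n_s^2 \in H_s$ and, by the general RGD-fact $H_t^{m(u)} \subseteq H_s H_t$ recalled in the preliminaries, $h^{n_s} \in H_s H_t$; hence $n_s^2 h^{n_s} \in \langle H_s \cup H_t \rangle \subseteq B_{\{s,t\}}$, and the right-hand side acts via $\omega_B$ as guaranteed by Theorem \ref{Theorem: 2-amalgam} and Lemma \ref{Lemma: BJ homotopic sequences to homotopic sequences}.

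The plan is to verify the equality on every chamber $[u_1, \ldots, u_k]$ of type $(s_1, \ldots, s_k)$ by a case analysis on $\ell(s \cdot s_1 \cdots s_k)$, paralleling the structure of the proof of Lemma \ref{Psacts}. In the extending case $\ell(s s_1 \cdots s_k) = k+1$, the first $n_s$ prepends $1_{U_s}$; the $h$-application leaves that $1$ untouched (since $1^{h^{-1}} = 1$) while acting on the tail by $h^{n_s}$; and the second $n_s$ -- falling into the $s = s_1$, $u_1 = 1$ branch of the formula preceding Lemma \ref{Lemma: ns automorphism} -- strips the $1$ and multiplies the tail by $n_s^2$, yielding $\omega_B(n_s^2 h^{n_s}, [u_1, \ldots, u_k])$ on the nose. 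In the shortening case $\ell(s s_1 \cdots s_k) = k-1$ I would choose a representative $(v_1, \ldots, v_k)$ with $v_1 \in U_s$ and split on $v_1 = 1$ versus $v_1 \neq 1$. When $v_1 = 1$ both sides collapse to $[1, \omega_B(h n_s^2, [v_2, \ldots, v_k])]$ via the key simplification $(n_s^2 h^{n_s})^{n_s} = h n_s^2$, and the claim is immediate.

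The main obstacle is the sub-case $v_1 \neq 1$, where $\omega(n_s, [v_1, v_2, \ldots, v_k]) = [\overline{v}_1, \omega_B(b(v_1), [v_2, \ldots, v_k])]$ introduces the Bruhat-type witnesses $\overline{v}_1$ and $b(v_1)$. Applying $h$ and then $n_s$ a second time -- noting that $\overline{v}_1^{h^{-1}} \neq 1$, since $h$ normalises $U_s$ and fixes only the identity -- produces a chamber with first component $\overline{\overline{v}_1^{h^{-1}}}$ and tail acted upon by $b(\overline{v}_1^{h^{-1}}) \cdot h^{n_s} \cdot b(v_1)$. The right-hand side, by contrast, directly produces first component $v_1^{(n_s^2 h^{n_s})^{-1}}$ and tail acted upon by $h n_s^2$. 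Both identifications collapse to the single identity obtained in $X_s$ by unravelling $n_s v_1 n_s^{-1}$ via Lemma \ref{Psacts}, commuting the resulting $\overline{v}_1$ past $h$ through $h \overline{v}_1 h^{-1} = \overline{v}_1^{h^{-1}}$, and applying Lemma \ref{Psacts} a second time to $\overline{v}_1^{h^{-1}}$; namely
\[
    (n_s h n_s)\, v_1 \,(n_s h n_s)^{-1} \;=\; n_s \overline{v}_1^{h^{-1}} n_s \cdot h^{n_s} b(v_1) n_s^{-2} h^{-1} n_s^{-1} \;=\; \overline{\overline{v}_1^{h^{-1}}}.
\]
Reading this equation in the two natural ways simultaneously yields the coincidence of first components and the $B_{\{s,t\}}$-relation $b(\overline{v}_1^{h^{-1}}) \cdot h^{n_s} \cdot b(v_1) = h n_s^2$, whose equal action on $[v_2, \ldots, v_k]$ is then provided by Theorem \ref{Theorem: 2-amalgam}. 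This closes the case analysis and establishes the lemma.
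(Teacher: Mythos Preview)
Your proposal is correct and follows essentially the same case structure as the paper: split on $\ell(ss_1\cdots s_k)$, in the shortening branch pass to a representative with $v_1\in U_s$, and sub-split on $v_1=1$ versus $v_1\neq 1$. The paper handles the hard sub-case by comparing Bruhat cosets $u\,n_s\,B$ on both sides to force equality of the $U_s$-parts, and then reads off the $B$-relation on the tails; you package the same comparison as a conjugation identity $(n_s h n_s)v_1(n_s h n_s)^{-1}$, which is equivalent. Two small points of exposition worth tightening: your phrase ``in $X_s$'' should be ``in $X_{s,t}$'' (or $G_{\{s,t\}}$), since $h\in H_t$; and the final equality in your displayed equation is really the \emph{conclusion}, so you should make explicit the separating step (e.g.\ $U_{\alpha_s}\cap n_s B_{\{s,t\}} n_s^{-1}=\{1\}$, or equivalently uniqueness in $u\,n_s\,B$) that lets you peel off the first component and the $B_{\{s,t\}}$-relation simultaneously.
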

\begin{proof}
	Let $(u_1, \ldots, u_k)$ be a sequence of type $(s_1, \ldots, s_k)$. We distinguish the following cases:
	\begin{enumerate}[label=(\alph*)]
		\item $\ell(ss_1 \cdots s_k) = k+1$: Then we have $\omega( n_s h n_s, [u_1, \ldots, u_k] ) = \omega_B( n_s^2 h^{n_s}, [u_1, \ldots, u_k])$.
	
		\item $\ell(ss_1 \cdots s_k) = k-1$: Again we can assume $s=s_1$. We distinguish the cases $u_1 =1$ and $u_1 \neq 1$ and compute the following:
		\begin{align*}
			\omega(n_s h n_s, [1, u_2 \ldots, u_k]) &= [1, \omega_B( h n_s^2, [u_2, \ldots, u_k] )] \\
			\omega( n_s^2 h^{n_s}, [1, u_2, \ldots, u_k] ) &= [1, \omega_B( (n_s^2)^{n_s} h^{n_s^2}, [u_2, \ldots, u_k] )] \\
			\omega( n_s h n_s, [u_1, \ldots, u_k] ) &= [ \overline{\overline{u}_1^{h^{-1}}}, \omega_B( b(\overline{u}_1^{h^{-1}}) h^{n_s}b(u_1), [u_2, \ldots, u_k] ) ] \\
			\omega( n_s^2 h^{n_s}, [u_1, \ldots, u_k] ) &= [ (u_1^{(h^{n_s})^{-1}})^{n_s^{-2}}, \omega_B( (n_s^2)^{n_s} h^{n_s^2}, [u_2, \ldots, u_k] )]
		\end{align*}
		In the case $u_1 = 1$ we have $h n_s^2 = n_s^2 n_s^{-2} h n_s^2 = n_s^2 h^{n_s^2}$ and hence we are done. In the case $u_1 \neq 1$ we have $\overline{\overline{u}_1^{h^{-1}}} = (u_1^{(h^{n_s})^{-1}})^{n_s^{-2}}$, as $\overline{\overline{u}_1^{h^{-1}}} n_s B_{\{s\}} = (u_1^{(h^{n_s})^{-1}})^{n_s^{-2}} n_s B_{\{s\}}$ and hence $b(\overline{u}_1^{h^{-1}}) h^{n_s}b(u_1) = (n_s^2)^{n_s} h^{n_s^2}$. The claim follows now from Theorem \ref{Theorem: 2-amalgam}. \qedhere
	\end{enumerate}
\end{proof}

\begin{lemma}\label{Lemma: relation conjugate root groups}
	For $s\neq t\in S$ and $\alpha_s \neq \alpha \in \Phi_+^{\{s, t\}}$ we have $n_s u_{\alpha} n_s = n_s^2 u_{\alpha}^{n_s}$ in $\Aut(\Cbf_{\mathcal{B}_{\mathcal{F}}})$.
\end{lemma}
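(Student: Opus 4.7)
The plan is to mirror the proof strategy of Lemma \ref{Psacts} and the preceding relation for $h \in H_t$: split the argument on the length $\ell(ss_1 \cdots s_k)$, where $(s_1,\ldots,s_k)$ is the type of the sequence $[u_1,\ldots,u_k]$, and in each case compute both sides of the claimed equation using the formulas for $\omega$ and $\omega_B$.

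First, I would handle Case (a), $\ell(ss_1 \cdots s_k) = k+1$. Here the first application of $n_s$ simply prepends $1_{U_s}$, yielding $[1_{U_s}, u_1, \ldots, u_k]$ of type $(s, s_1, \ldots, s_k)$. Since $\alpha \in \Phi_+^{\{s,t\}} \setminus \{\alpha_s\}$ implies $s\alpha \in \Phi_+^{\{s,t\}}$, both $u_\alpha$ and $u_\alpha^{n_s}$ lie in $U_+^{\{s,t\}} \leq B_{\{s,t\}}$, so the right-hand side $n_s^2 u_\alpha^{n_s}$ acts entirely via $\omega_B$. The computation then reduces to verifying that the two resulting expressions differ only by a relation in $B_{\{s,t\}}$, which acts trivially by Theorem \ref{Theorem: 2-amalgam}.

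Next, in Case (b), $\ell(ss_1 \cdots s_k) = k-1$, I would assume without loss of generality that $s = s_1$ and further split into the sub-cases $u_1 = 1_{U_s}$ and $u_1 \neq 1_{U_s}$, following the pattern of Lemma \ref{Psacts}. In each sub-case, explicit computation of the left-hand side gives a sequence obtained by composing the actions of $n_s$, $u_\alpha$, and $n_s$ again, while the right-hand side is computed directly through $\omega_B$. Comparing leading components via the natural labelling of the rank-$2$ building associated with $X_{\{s,t\}}$ pins down the first coordinate of the resulting sequence, and the tails are governed by elements that turn out to be relations in the appropriate $B_{\{s,t,s_i\}}$.

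The main obstacle is that $u_\alpha$, unlike $u_s$ or $h \in H_t$ in the previous two lemmas, may require a genuine decomposition as a product of elements of $U_s \cup U_t$ before $\omega_B$ can be applied, and this decomposition interacts through commutator relations with the rest of the sequence. To handle this cleanly I would invoke Lemma \ref{Lemma: action stops}(a,b) to translate the calculation into a statement about cosets of $B_{\{s,t,s_1\}}$ inside $X_{\{s,t,s_1\}}$, which is an RGD-system by Lemma \ref{Lemma: integrable implies RGD-system} or Lemma \ref{Lemma: reducible implies RGD-system}. Within that RGD-system the identity $n_s u_\alpha n_s = n_s^2 u_\alpha^{n_s}$ is tautological (it is just the definition of conjugation together with the interpretation $n_s^2 \in H_s$), and its compatibility with the action on $\Cbf_{\mathcal{B}_{\mathcal{F}}}$ then follows from Theorem \ref{Theorem: 2-amalgam} and Lemma \ref{Lemma: BJ homotopic sequences to homotopic sequences}.
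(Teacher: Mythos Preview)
Your case structure matches the paper's exactly, and you correctly identify that the new obstacle (relative to the two preceding lemmas) is that $u_\alpha$ is not a single element of some $U_r$ or $H_r$ and must be decomposed before $\omega_B$ can be applied. What the paper actually does with this obstacle is more hands-on than your final paragraph suggests: it invokes Lemma~\ref{coUplus} to write $u_\alpha = \prod_{i=1}^n g_i h_i$ with $g_i \in U_t$, $h_i \in U_s$, observes (as in the proof of Theorem~\ref{Theorem: 2-amalgam}) that $h_1\cdots h_n = 1$, and then computes both sides of the claimed identity directly from the recursive formulas for $\omega_B$ and $\omega(n_s,\cdot)$. In each sub-case the residual acting on the tail is an explicit product $b$ (respectively $b'$), and the proof finishes by checking the elementary identity $b = u_\alpha^{n_s}$ (respectively $(n_s^2)^{n_s} b' = u_\alpha n_s^2$, etc.) inside $B_{\{s,t\}}$.

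Your proposed shortcut --- translate via Lemma~\ref{Lemma: action stops} to a coset statement in the rank-$3$ RGD-system $G_{\{s,t,s_1\}}$ and then invoke the ``tautological'' identity $n_s u_\alpha n_s = n_s^2 u_\alpha^{n_s}$ there --- does not work as stated. Lemma~\ref{Lemma: action stops} only certifies that the $\omega_B$-action of $B_J$ agrees with the left-coset action of $B_J$ on $G_J/B_J$; it says nothing about $\omega(n_s,\cdot)$, which is defined by an ad hoc formula and is \emph{not yet known} to come from any group action of $G_J$. Verifying the relation $n_s u_\alpha n_s = n_s^2 u_\alpha^{n_s}$ in $\Aut(\Cbf_{\mathcal{B}_{\mathcal{F}}})$ is precisely one of the steps needed to establish such compatibility, so appealing to the group identity in $G_J$ at this stage is circular. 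Lemma~\ref{Lemma: action stops}(b) can legitimately be used to identify the residual from the middle $\omega_B(u_\alpha,\cdot)$ step as a conjugate of $u_\alpha$, which streamlines part of the bookkeeping, but you still need the decomposition from Lemma~\ref{coUplus} to determine the leading coordinate (via $h_1\cdots h_n = 1$), and you still have to apply the explicit $\omega(n_s,\cdot)$ formula on both ends and compare --- exactly as the paper does.
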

\begin{proof}
	Let $\alpha_s \neq \alpha \in \Phi_+^{\{s, t\}}$. By Lemma \ref{coUplus} there exists $g_i \in U_t,h_i \in U_s$ such that $u_{\alpha} = \prod_{i=1}^{n} g_i h_i$. Note that $h_1 \cdots h_n = 1$ as in the proof of Theorem \ref{Theorem: 2-amalgam}. We distinguish the following cases:
	\begin{enumerate}[label=(\alph*)]
		\item $\ell(ss_1 \cdots s_k) = k+1$: We deduce the following
		\begin{align*}
			\omega(n_s u_{\alpha} n_s, [u_1, \ldots, u_k]) &= \omega(n_s, [h_1 \cdots h_n, \omega_B( b, [u_1, \ldots, u_k] )]) = \omega_B(n_s^2 b, [u_1, \ldots, u_k])
		\end{align*}
		where $b = \prod_{i=1}^{n} g_i^{n_s} [g_i, h_i \cdots h_n]^{n_s}$. Note that $b = \left( (h_1 \cdots h_n)^{-1} \prod_{i=1}^{n} g_ih_i \right)^{n_s} = u_{\alpha}^{n_s}$ and hence the claim follows by Lemma \ref{Lemma: BJ homotopic sequences to homotopic sequences}.
		
		\item $\ell(ss_1 \cdots s_k) = k-1$. W.l.o.g. we assume $s=s_1$. Let $g_i' \in U_t, h_i' \in U_s$ such that $u_{\alpha}^{n_s} = \prod_{i=1}^{m} g_i'h_i'$. Again, $h_1' \cdots h_m' = 1$. We deduce the following:
		\begin{enumerate}[label=(\roman*)]
			\item If $u_1 = 1$, we have:
			\allowdisplaybreaks
			\begin{align*}
				\omega(n_s u_{\alpha} n_s, [u_1, \ldots, u_k]) &= [1, \omega_B( u_{\alpha} n_s^{2}, [u_2, \ldots, u_k] )] \\
				\omega(n_s^2 u_{s\alpha}, [u_1, \ldots, u_k]) &= \omega(n_s^{2}, [h_1' \cdots h_m', \omega_B(b, [u_2, \ldots, u_k])]) \\
				&= [1, \omega_B( (n_s^{2})^{n_s} b, [u_2, \ldots, u_k] )]
			\end{align*}
			where $b = \prod_{i=1}^{m} (g_i')^{n_s} [g_i', h_i' \cdots h_m']^{n_s}$. Note that $b = ( (h_1' \cdots h_m')^{-1} \prod_{i=1}^m g_i' h_i' )^{n_s} = (u_{\alpha}^{n_s})^{n_s} = n_s^{-2} u_{\alpha} n_s^2$. Thus $(n_s^2)^{n_s} b = u_{\alpha} n_s^2$ in $B_{\{s, t\}}$ and the claim follows from Lemma \ref{Lemma: BJ homotopic sequences to homotopic sequences}.
				
			\item If $u_1 \neq 1$, we compute the following:
			\allowdisplaybreaks
			\begin{align*}
				\omega( n_s u_{\alpha} n_s, [u_1, \ldots, u_k] ) &= \omega( n_s u_{\alpha}, [\overline{u}_1, \omega_B( b(u_1), [u_2, \ldots, u_k] )] ) \\
				&= \omega(n_s, [h_1 \cdots h_n\overline{u}_1, \omega_B(b\cdot b(u_1), [u_2, \ldots, u_k])]) \\
				&= [ \overline{\overline{u}}_1, \omega_B( b(\overline{u}_1) \cdot b \cdot b(u_1), [u_2, \ldots, u_k] ) ] \\
				\omega(n_s^2 u_{s\alpha}, [u_1, \ldots, u_k]) &= \omega( n_s^2, [ h_1' \cdots h_m' u_1, \omega_B( b', [u_2, \ldots, u_k] ) ]) \\
				&= [ u_1^{n_s^{-2}}, \omega_B( (n_s^2)^{n_s}b', [u_2, \ldots, u_k] ) ]
			\end{align*}
			where $b = \prod_{i=1}^{n} g_i^{n_s} [g_i, h_i \cdots h_m u_1]^{n_s}$ and $b' = \prod_{i=1}^{m} g_i'^{n_s} [g_i', h_i' \cdots h_m' u_1]^{n_s}$. As before, we have $\overline{\overline{u}}_1 = u_1^{n_s^{-2}}$ and $b(\overline{u}_1)\cdot b \cdot b(u_1) = (n_s^2)^{n_s} b'$ in $B_{\{s\}}$. Now the claim follows from Lemma \ref{Lemma: BJ homotopic sequences to homotopic sequences}. \qedhere
		\end{enumerate}
	\end{enumerate}
\end{proof}

\begin{lemma}\label{Braidrelationsact}
	For all $s \neq t \in S$ we have $p(n_s, n_t) = p(n_t, n_s)$ in $\Aut(\Cbf_{\mathcal{B}_{\mathcal{F}}})$.
\end{lemma}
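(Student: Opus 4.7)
The strategy is to verify the braid identity on each chamber $c = [u_1,\ldots,u_k]$ of $\Cbf_{\mathcal{B}_{\mathcal{F}}}$ by reducing the computation to the rank-$2$ system $X_{s,t}$, in which the braid relation $p(n_s,n_t) = p(n_t,n_s)$ already holds by the construction of $X_{s,t}$ as an RGD-system of type $(\langle s,t\rangle,\{s,t\})$. Given a chamber with reduced-type representative of type $w = s_1\cdots s_k$, I decompose $w = v w'$ uniquely with $v \in \langle s,t\rangle$ of maximal length and $w'$ $\{s,t\}$-reduced (so $\ell(sw') = \ell(tw') = \ell(w')+1$), satisfying $\ell(w) = \ell(v)+\ell(w')$. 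By Corollary~\ref{Corollary: Ro89Theorem7.1+7.2} I pick a representative $(x_1,\ldots,x_j,y_1,\ldots,y_l)$ of $c$ with $(x_1,\ldots,x_j)$ of type a reduced expression for $v$ in the alphabet $\{s,t\}$ and $(y_1,\ldots,y_l)$ of type a reduced expression for $w'$ whose first letter lies outside $\{s,t\}$.

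The central claim is that every word $g$ in $n_s^{\pm 1}, n_t^{\pm 1}$ and elements of $B_{\{s,t\}}$ acts on the split representative by
\[
\omega(g, [x_1,\ldots,x_j,y_1,\ldots,y_l]) = [z_1,\ldots,z_{j'},y_1,\ldots,y_l],
\]
where $(z_1,\ldots,z_{j'})$ represents the chamber obtained by applying $g$ to $[x_1,\ldots,x_j]$ inside the rank-$2$ building $\Delta_{\{s,t\}} \cong X_{s,t}/B_{\{s,t\}}$, under left multiplication. Once this claim is proved, the lemma follows immediately: since $p(n_s,n_t) = p(n_t,n_s)$ in $X_{s,t}$, both $\omega(p(n_s,n_t),c)$ and $\omega(p(n_t,n_s),c)$ reduce, via the claim, to a sequence of the form $[z_1,\ldots,z_{j'},y_1,\ldots,y_l]$ with a common prefix $(z_1,\ldots,z_{j'})$, and these yield the same chamber of $\Cbf_{\mathcal{B}_{\mathcal{F}}}$ because equivalent sequences in $\Delta_{\{s,t\}}$ give elementarily equivalent sequences in $\Cbf_{\mathcal{B}_{\mathcal{F}}}$ after appending $(y_1,\ldots,y_l)$.

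I would prove the central claim by induction on the word length of $g$, following the template of Lemma~\ref{Lemma: BJ homotopic sequences to homotopic sequences}, Lemma~\ref{Psacts} and Lemma~\ref{Lemma: relation conjugate root groups}. Whenever $\omega(n_r,\cdot)$ falls into the length-increase branch, which is forced once the $\{s,t\}$-prefix has been used up (by the $\{s,t\}$-reducedness of $w'$), it prepends $1_{U_r}$ and leaves the tail fixed. Whenever it falls into the rewriting branch ($r$ equals the first letter of the type), it produces a residual element of $B_{\{r\}} \subseteq B_{\{s,t\}}$, namely $n_r^2$ or $b(x_1)$; Lemma~\ref{Lemma: action stops}(a) together with the well-definedness of $\omega_B$ on $B_{\{s,t\}}$ (Theorem~\ref{Theorem: 2-amalgam}) then translate the subsequent propagation into the corresponding left-multiplication action on the shorter prefix in $\Delta_{\{s,t\}}$, while the tail is carried through unchanged.

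The main obstacle will be the bookkeeping in the inductive step: one must control how a $B_{\{s,t\}}$-residual produced inside the prefix is transported through the tail $(y_1,\ldots,y_l)$ whose types all lie outside $\{s,t\}$. The crucial point is that the $\{s,t\}$-reducedness of $w'$ keeps the sequence in the "prefix $+$ tail" form throughout the computation, and any apparent nontrivial effect on the tail must cancel in the final comparison: the two total operations $p(n_s,n_t)$ and $p(n_t,n_s)$ coincide as elements of $X_{s,t}$, so their residuals—and hence their actions on the tail—coincide as well.
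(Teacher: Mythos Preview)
Your central claim --- that the tail $(y_1,\ldots,y_l)$ is left literally unchanged --- is false. Already for $g = h \in H_s \subseteq B_{\{s,t\}}$ acting on a chamber with empty $\{s,t\}$-prefix, Theorem~\ref{Theorem: 2-amalgam} gives $\omega_B(h,[y_1,\ldots,y_l]) = [y_1^{h^{-1}}, \omega_B(h^{n_{r}},[y_2,\ldots])]$, which modifies the tail. Any workable version of the claim must therefore read: $\omega(g,[x_1,\ldots,x_j,y_1,\ldots,y_l]) = [z_1,\ldots,z_{j'}, \omega_B(b_g,(y_1,\ldots,y_l))]$ for some residual $b_g \in B_{\{s,t\}}$, together with the assertion that $b_g$ depends only on the image of $g$ in $X_{s,t}$ (not on the word). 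Your last paragraph asserts exactly this (``their residuals coincide since $p(n_s,n_t)=p(n_t,n_s)$ in $X_{s,t}$''), but you have not proved that $g \mapsto b_g$ factors through $X_{s,t}$. That factorisation is not automatic: it amounts to knowing that every relation of $X_{s,t}$ --- in particular the braid relation itself --- already holds in $\Aut(\Cbf_{\mathcal{B}_{\mathcal{F}}})$ modulo the tail, which is circular. Lemma~\ref{Lemma: action stops}(b), which you cite, only covers words in $\bigcup_j U_j$, not words involving $n_s$.

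The paper breaks this circularity by working one letter at a time rather than one rank-$2$ block at a time. It first rewrites $p(n_t,n_s)^{-1} p(n_s,n_t)\, u_1 n_1$ (with $u_1 \in U_s \cup U_t$), using \emph{only} the relations already established in $\Aut(\Cbf_{\mathcal{B}_{\mathcal{F}}})$ by Lemmas~\ref{Psacts}--\ref{Lemma: relation conjugate root groups}, into the form $u_1''\, n_1 \cdot (\text{shorter braid differences}) \cdot b$ with $b \in B_{\{s,t\}}$; these are the explicit computations in parts~(a) and~(b) at the start of the proof. One then checks inside $X_{s,t}$ that $u_1'' = u_1$ and that the concatenated $b$'s equal $1$, so $b$ acts trivially on the remainder by Theorem~\ref{Theorem: 2-amalgam}, and the shorter braid differences are handled by induction on $\ell(w)$. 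That explicit peeling-off is precisely the bookkeeping your outline identifies as ``the main obstacle'' but does not supply.
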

\begin{proof}
	At first we transform some elements by only using relations which are already known to hold in $\Aut(\Cbf_{\mathcal{B}_{\mathcal{F}}})$, i.e. using Lemmas \ref{Psacts} - \ref{Lemma: relation conjugate root groups}. We compute the following:
	\begin{enumerate}[label=(\alph*)]
		\item $u = 1$ and $n_1 = n_t$: Then 
		\begin{align*}
			n_s^{-1} n_t^{-1} n_s n_t n_t &= n_t n_t^{-1} n_s^{-1} n_t n_s (n_t^{-2})^{n_s} n_t^2 \\
			n_s^{-1} n_t^{-1} n_s^{-1} n_t n_s n_t n_t &=  n_t n_t^{-1} n_s^{-1} n_t^{-1} n_s n_t n_s (n_s^{-2})^{n_t n_s} n_t^2 \\
			n_s^{-1} n_t^{-1} n_s^{-1} n_t^{-1} n_s n_t n_s n_t n_t &= n_t n_t^{-1} n_s^{-1} n_t^{-1} n_s^{-1} n_t n_s n_t n_s (n_t^{-2})^{n_s n_t n_s} n_t^2
		\end{align*}
	
		\item $U_t \ni u_t \neq 1$ and $n_1 = n_t$: Then
		\allowdisplaybreaks
		\begin{align*}
			n_s^{-1} n_t^{-1} n_s n_t u_t n_t &= n_s^{-1} n_t^{-1} n_s \overline{u}_t n_s^{-1} n_s n_t b_t \\
			&= n_s^{-1} n_t n_t^{-2} u_t' n_s n_t b_t \\
			&= n_s^{-1} n_t u_t'' n_t n_t^{-2} n_t^{-1} n_s n_t b_t \\
			&= n_s^{-1} \overline{u_t''} n_s n_s^{-1} n_t n_s n_s^{-1} b_t' n_t^{-2} n_s n_s^{-1} n_t^{-1} n_s n_t b_t \\
			&= u_t''' n_t n_t^{-1} n_s^{-1} n_t n_s b_t'' n_s^{-1} n_t^{-1} n_s n_t b_t \\
			n_s^{-1} n_t^{-1} n_s^{-1} n_t n_s n_t u_t n_t &= n_s^{-1} n_t^{-1} n_s^{-1} n_t n_s \overline{u}_t n_s^{-1} n_t^{-1} n_t n_s n_t b_t \\
			&= n_s^{-1} n_t^{-1} n_s n_s^{-2} u_t' n_s n_s^{-1} n_t n_s n_t b_t \qquad \qquad (\text{note: } u_t' \in U_s) \\
			&= n_s^{-1} n_t^{-1} n_s u_t'' n_s n_s^{-2} n_s^{-1} n_t n_s n_t b_t \\
			&= n_s^{-1} n_t^{-1} \overline{u_t''} n_t n_s n_s^{-1} n_t^{-1} n_s b_s n_s^{-2} n_s^{-1} n_t n_s n_t b_t \\
			&= u_t''' n_t n_t^{-1} n_s^{-1} n_t^{-1} n_s n_t n_s n_s^{-1} n_t^{-1} b_s n_s^{-2} n_t n_s n_s^{-1} n_t^{-1} n_s^{-1} n_t n_s n_t b_t \\
			&= u_t''' n_t n_t^{-1} n_s^{-1} n_t^{-1} n_s n_t n_s  b_s' n_s^{-1} n_t^{-1} n_s^{-1} n_t n_s n_t b_t \\
			(n_s^{-1} n_t^{-1})^2 (n_s n_t)^2 u_t n_t &= (n_s^{-1} n_t^{-1})^2 n_s n_t n_s \overline{u}_t n_s^{-1} n_t^{-1} n_s^{-1} n_s n_t n_s n_t b_t \\
			&= n_s^{-1} n_t^{-1} n_s^{-1} n_t n_t^{-2} u_t' n_s n_t n_s n_t b_t \\
			&= n_s^{-1} n_t^{-1} n_s^{-1} n_t u_t'' n_t n_t^{-2} n_t^{-1} n_s n_t n_s n_t b_t \\
			&= n_s^{-1} n_t^{-1} n_s^{-1} \overline{u_t''} n_t n_s n_t n_s n_s^{-1} n_t^{-1} n_s^{-1} b_t' n_t^{-2} n_s n_t n_s (n_s^{-1} n_t^{-1})^2 (n_s n_t)^2 b_t \\
			&= n_s^{-1} n_t^{-1} n_s^{-1} \overline{u_t''} n_s n_t n_s n_s^{-1} n_t^{-1} n_s^{-1} (n_t n_s)^2 b_t'' (n_s^{-1} n_t^{-1})^2 (n_s n_t)^2  b_t \\
			&= u_t''' n_t n_t^{-1} n_s^{-1} n_t^{-1} n_s^{-1} (n_t n_s)^2 b_t'' (n_s^{-1} n_t^{-1})^2 (n_s n_t)^2  b_t \\
			&= u_t''' n_t (n_t^{-1} n_s^{-1})^2 (n_t n_s)^2 b_t'' (n_s^{-1} n_t^{-1})^2 (n_s n_t)^2  b_t
		\end{align*}
	\end{enumerate}
	Now we prove the claim. Let $s\neq t \in S, s_1, \ldots, s_k \in S, w\in \langle s, t \rangle$ be such that $s_1 \cdots s_k, ws_1 \cdots s_k$ are reduced and let $v_i \in U_{s_i}, u_i \in U_s \cup U_t$. We show by induction on $\ell(w)$ that $p(n_s, n_t)^{-1} p(n_t, n_s)$ fixes the equivalence class $[u_1, \ldots, u_n, v_1, \ldots, v_k]$, where $(u_1, \ldots, u_n, v_1, \ldots, v_k)$ is of type $(w, s_1, \ldots, s_k)$. For $\ell(w) = 0$ we have $l(p(s, t) s_1 \cdots s_k) = m_{st} +k$. Since $(1, \ldots, 1)$ of type $(p(s,t))$ is equivalent to $(1,\ldots, 1)$ of type $(p(t, s))$ it follows that
	\begin{align*}
		\omega( p(n_s, n_t), [v_1, \ldots, v_k] ) &= [1, \ldots, 1, v_1, \ldots, v_k] = \omega( p(n_t, n_s), [v_1, \ldots, v_k] )
	\end{align*}
	and hence $\omega( p(n_s, n_t)^{-1} p(n_t, n_s), [v_1, \ldots, v_k]) = [v_1, \ldots, v_k]$. Now let $\ell(w) >0$. Note that it suffices to show that one of $p(n_s, n_t)^{-1} p(n_t, n_s)$ and $p(n_t, n_s)^{-1} p(n_s, n_t)$ acts trivial, as the product of these two elements act trivial. Using the previous computations and induction, we infer:
	\begin{align*}
		\omega( p(n_s, n_t)^{-1} p(n_t, n_s), [(u_1, \ldots, u_n, v_1, \ldots, u_k)]) &= \omega( p(n_s, n_t)^{-1} p(n_t, n_s) u_1 n_1, [(u_2, \ldots, u_n, v_1, \ldots, u_k)]) \\
		&= [u_1, \ldots, u_n, v_1, \ldots, v_k]
	\end{align*}
	For example we consider the case $1 \neq u_1 \in U_s, n_1 = n_s$ and $m_{st} = 4$ explicitly. We have the following:
	\begin{align*}
		\omega((n_s n_t)^{-2} (n_t n_s)^2, [u_1, \ldots, u_k]) &= \omega((n_s n_t)^{-2} (n_t n_s)^2 u_1 n_s, [u_2, \ldots, u_k]) \\
		&= \omega( u_s''' n_s (n_s^{-1} n_t^{-1})^2 (n_s n_t)^2 b_s'' (n_t^{-1} n_s^{-1})^2 (n_t n_s)^2  b_s, [u_2, \ldots, u_k] ) \\
		&= \omega( u_s''' n_s  b_s''  b_s, [u_2, \ldots, u_k] )
	\end{align*}
	As before, we have $u_s''' = u_1$ and $b_s'' b_s = 1$ in $X_{s, t}$. This finishes the claim. \qedhere

\end{proof}

\begin{theorem}\label{welldefinedaction}
	The mapping $G \times \Cbf_{\mathcal{B}_{\mathcal{F}}} \to \Cbf_{\mathcal{B}_{\mathcal{F}}}, \left( g, [(u_1, \ldots, u_k)] \right) \mapsto \left[ \omega(g, \left( u_1, \ldots, u_k \right)) \right]$ is a well-defined action.
\end{theorem}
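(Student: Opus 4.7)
The plan is to invoke the universal property of the direct limit of the groups $X_s$ and $X_{s,t}$. Recall from the Preliminaries that $G$ is defined as the 2-amalgam of the $X_s$ (with the canonical homomorphisms $X_s \to X_{s,t}$), so any family of mutually compatible homomorphisms $X_{s,t} \to \Aut(\Cbf_{\mathcal{B}_{\mathcal{F}}})$ automatically factors through $G$.

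First I would verify that for each $s \neq t \in S$ we have a well-defined homomorphism $X_{s,t} \to \Aut(\Cbf_{\mathcal{B}_{\mathcal{F}}})$. Since $(X_{s,t}, (U_{\pm\alpha_s}, U_{\pm\alpha_t}))$ is a spherical RGD-system (by construction if $m_{st} = 2$, and by the arguments from the foundation section otherwise), Theorem \ref{presentationofanRGDsystem} gives a presentation of $X_{s,t}$ with generators $H_s \cup H_t \cup U_{\alpha}$ (for $\alpha \in \Phi_+^{\{s,t\}}$) together with $n_s, n_t$, and relations consisting of (i) all relations in $B_{\{s,t\}}$, (ii) $n_r h n_r = n_r^2 h^{n_r}$ for $h \in H_{s} \cup H_t$, (iii) $n_r u_\alpha n_r = n_r^2 u_\alpha^{n_r}$ for $\alpha \neq \alpha_r$, (iv) $n_r u_r n_r = \overline{u}_r n_r b(u_r)$, and (v) $p(n_s, n_t) = p(n_t, n_s)$. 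The generators act on $\Cbf_{\mathcal{B}_{\mathcal{F}}}$ via Theorem \ref{Theorem: 2-amalgam} (extended by Lemma \ref{Lemma: BJ homotopic sequences to homotopic sequences}) for $B_{\{s,t\}}$ and via Lemma \ref{Lemma: ns automorphism} for $n_s^{\pm 1}$. Each of the relations (i)--(v) has been shown to act trivially: (i) is precisely Theorem \ref{Theorem: 2-amalgam} together with Lemma \ref{Lemma: BJ homotopic sequences to homotopic sequences}; (ii) is the lemma following Lemma \ref{Psacts}; (iii) is Lemma \ref{Lemma: relation conjugate root groups}; (iv) is Lemma \ref{Psacts}; and (v) is Lemma \ref{Braidrelationsact}. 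This yields homomorphisms $\rho_{s,t}: X_{s,t} \to \Aut(\Cbf_{\mathcal{B}_{\mathcal{F}}})$.

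Next I would check compatibility: for each pair $s \neq t$, the composition $X_s \to X_{s,t} \xrightarrow{\rho_{s,t}} \Aut(\Cbf_{\mathcal{B}_{\mathcal{F}}})$ is independent of $t$. Both maps send the generators $H_s, U_s, n_s$ of $X_s$ to the same explicitly defined automorphisms (the formulas in Theorems \ref{Theorem: 2-amalgam} and Lemma \ref{Lemma: ns automorphism} involve only the action on sequences and depend only on $s$, not on $t$), so compatibility is immediate on generators and hence on all of $X_s$. Thus the family $\{\rho_{s,t}\}$ satisfies the hypothesis of the universal property of the 2-amalgam, and I obtain a homomorphism $\rho: G \to \Aut(\Cbf_{\mathcal{B}_{\mathcal{F}}})$.

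Finally, the action in the statement is precisely $(g, [\bar{u}]) \mapsto \rho(g)([\bar{u}])$, since the formulas for $\omega$ on the generators $H_s, U_s, n_s^{\pm 1}$ of $G$ agree with those of $\rho_{s,t}$. The main obstacle was not this final step but the verification of the relations in the preceding lemmas; once those are in place, the theorem reduces to a clean application of the universal property of the direct limit.
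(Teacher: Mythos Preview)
Your proposal is correct and follows essentially the same approach as the paper, which simply states that the theorem is a consequence of Theorem \ref{presentationofanRGDsystem} together with Lemmas \ref{Lemma: BJ homotopic sequences to homotopic sequences} and \ref{Lemma: ns automorphism}--\ref{Braidrelationsact}. You have unpacked the argument more explicitly, in particular by spelling out the role of the universal property of the direct limit and the compatibility check for the maps $X_s \to \Aut(\Cbf_{\mathcal{B}_{\mathcal{F}}})$, but the underlying strategy is the same.
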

\begin{proof}
	This is a consequence of Theorem \ref{presentationofanRGDsystem} and the Lemmas \ref{Lemma: BJ homotopic sequences to homotopic sequences},  \ref{Lemma: ns automorphism} - \ref{Braidrelationsact}.
\end{proof}

\section{Main results}

\begin{remark}
	Let $\mathcal{F} := ( (\Delta_J)_{J \in E(S)}, (c_J)_{J \in E(S)}, (\phi_{rst})_{\{r, s\}, \{s, t\} \in E(S)})$ be an irreducible $3$-spherical Moufang foundation such that every panel contains at least $6$ chambers and such that each residue $\mathcal{F}_J$ of rank $3$ is integrable. Then $\mathcal{F}$ satisfies Condition $\lco$ and for each $J \subseteq S$ with $\vert J \vert = 3$ the $J$-residue $\mathcal{F}_J$ satisfies the Conditions $\lco$ and $\lsco$.
\end{remark}

\begin{theorem}\label{Theorem: rank 3 intrgrable implies RGD-system}
	Let $\mathcal{F}$ be an irreducible, $3$-spherical Moufang foundation of rank at least $3$ such that every panel contains at least $6$ chambers. Assume that for each irreducible $J \subseteq S$ with $\vert J \vert = 3$ the $J$-residue $\mathcal{F}_J$ is integrable. Then $\mathcal{D}_{\mathcal{F}}$ is an RGD-system and $\mathcal{F} \cong \mathcal{F}(\Delta(\mathcal{D}_{\mathcal{F}}), B_+)$. In particular, $\mathcal{F}$ is integrable.
\end{theorem}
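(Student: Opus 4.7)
My plan is to verify the two hypotheses of Corollary \ref{inclusionsinjectivergd3}: the injectivity of the canonical maps $U_{\pm s}\to G$, and axiom (RGD$3$) for $\mathcal{D}_{\mathcal{F}}$. The remark preceding the theorem guarantees that $\mathcal{F}$ satisfies Condition $\lco$, so the corollary is applicable once these two conditions are established. Both will be read off from the action of $G$ on the blueprint building $\Cbf_{\mathcal{B}_{\mathcal{F}}}$ supplied by Theorem \ref{welldefinedaction}, with $[()]$ denoting the fundamental chamber.

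The injectivity of $U_{\pm s}\to G$ will be obtained from the action on the $s$-panel through $[()]$. For $u\in U_s$, Theorem \ref{Theorem: 2-amalgam} yields $\omega(u, [(1_{U_s})]) = [(u)]$, with pairwise distinct chambers on the right; hence $U_s$ acts faithfully and so $U_s\to G$ is injective. Since $U_{-s} = n_s^{-1} U_s n_s$ in $X_s$, a short calculation using Lemma \ref{Lemma: ns automorphism} gives $\omega(n_s^{-1} v n_s, [()]) = [\overline{v^{n_s^2}}]$ for $v\in U_s$, and this chamber differs from $[()]$ whenever $v\neq 1$ (as $\overline{w}\neq 1$ for $w\neq 1$ by the Bruhat relation in $X_s$). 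This yields injectivity of $U_{-s}\to G$ and, at the same time, exhibits an element of $U_{-\alpha_s}$ that does not fix $[()]$.

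To complete (RGD$3$), i.e.\ $U_{-\alpha_s}\not\le U_+$, it remains to show that $U_+ = \langle U_\alpha\mid \alpha\in\Phi_+\rangle$ fixes $[()]$ under the $G$-action. The simple case $\alpha=\alpha_s$ is immediate from $\omega_B(u, ()) = ()$. For a general positive root $\alpha$, the group $U_\alpha$ is by definition a conjugate of some $U_s$ by a product of $m(u_i)$-elements, and the task is to identify it with the Moufang root group of the corresponding root in $\Cbf_{\mathcal{B}_{\mathcal{F}}}$, which fixes the half-apartment containing $[()]$. I plan to carry this out by first treating each spherical rank-$2$ and rank-$3$ residue using the local RGD structures from Lemmas \ref{Lemma: integrable implies RGD-system} and \ref{Lemma: reducible implies RGD-system} together with Lemma \ref{Lemma: RGD-system residue isomorphism}, and then globalising along walls via Lemma \ref{CM06Prop2.7}.

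The principal obstacle is precisely this last identification: even though the conjugating element $m(u_i)$ involves negative root groups and does not itself stabilize $[()]$, the conjugate $U_\alpha$ must coincide with the stabilizer of the positive half-apartment. Once $U_+\le \Stab_G([()])$ is in hand, (RGD$3$) follows at once, and Corollary \ref{inclusionsinjectivergd3} yields that $\mathcal{D}_{\mathcal{F}}$ is an RGD-system with $\mathcal{F}\cong\mathcal{F}(\Delta(\mathcal{D}_{\mathcal{F}}), B_+)$; in particular $\mathcal{F}$ is integrable.
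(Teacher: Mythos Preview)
Your overall plan and the injectivity arguments are correct and match the paper's proof. The issue is your treatment of (RGD$3$): you are making it much harder than it is, and you do not actually complete the argument.

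You propose to show $U_+\le\Stab_G([()])$ by identifying each $U_\alpha$ (for $\alpha\in\Phi_+$) with the Moufang root group of the corresponding root in $\Cbf_{\mathcal{B}_{\mathcal{F}}}$, working locally in rank~$2$ and~$3$ residues and then globalising along walls via Lemma~\ref{CM06Prop2.7}. You yourself flag this identification as ``the principal obstacle'' and leave it as a plan rather than a proof. But none of this is needed. Under Condition~$\lco$ one has $U_\alpha\le\langle U_{\alpha_s}\mid s\in S\rangle$ for every $\alpha\in\Phi_+$; this is exactly the content of Lemma~\ref{coUplus} (and is invoked verbatim earlier in the paper, in the paragraph on left multiplication). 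Since $\omega_B(g,())=()$ for $g\in U_{\alpha_s}$ by the very definition in Theorem~\ref{Theorem: 2-amalgam}, every simple root group fixes $[()]$, hence so does $\langle U_{\alpha_s}\mid s\in S\rangle\supseteq U_+$. Your computation already shows $U_{-\alpha_s}$ does not fix $[()]$, so (RGD$3$) follows immediately.

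In short: replace your entire third paragraph and the ``principal obstacle'' by the one-line observation $U_+=\langle U_{\alpha_s}\mid s\in S\rangle$ from Lemma~\ref{coUplus}, and the proof is complete exactly as in the paper.
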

\begin{proof}
	By Theorem \ref{welldefinedaction} we have an action of $G$ on $\Cbf_{\mathcal{B}_{\mathcal{F}}}$. Clearly, $U_s \to G$ is injective. Let $1\neq u' \in U_{-s}$. Then $1 \neq u := n_s^{-1} u' n_s \in U_s$ and $\omega(n_s u n_s^{-1}, [()]) = \omega(n_s, [(u)]) = [ \overline{u} ]$. Since $\overline{u} = 1$ if and only if $u=1$, the element $u'$ acts non-trivial on the building and hence $U_{-s} \to G$ is injective as well. Since for each $\alpha \in \Phi_+$ we have $U_{\alpha} \leq \langle U_{\alpha_s} \mid s\in S\rangle$ and $U_{\alpha_s}$ acts trivial on $[()]$, but $U_{-\alpha_s}$ does not fix $[()]$, $\mathcal{D}_{\mathcal{F}}$ satisfies (RGD$3$). Thus $\mathcal{D}_{\mathcal{F}}$ is an RGD-system of type $(W, S)$ and the claim follows from Corollary \ref{inclusionsinjectivergd3}.
\end{proof}

\begin{corollary}\label{Corollary: Main result}
	Let $\mathcal{F}$ be an irreducible, $3$-spherical Moufang foundation of rank at least $3$ such that every panel contains at least $6$ chambers. Then the following are equivalent:
	\begin{enumerate}[label=(\roman*)]
		\item $\mathcal{F}$ is integrable.
		
		\item For each irreducible $J \subseteq S$ with $\vert J \vert = 3$ the $J$-residue $\mathcal{F}_J$ is integrable.
	\end{enumerate}
\end{corollary}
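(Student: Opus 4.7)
The plan for this corollary is to split the equivalence into its two directions and observe that the substantial direction has already been handled upstream.

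For (ii) $\Rightarrow$ (i), no new argument is required: this is exactly the conclusion of Theorem \ref{Theorem: rank 3 intrgrable implies RGD-system}, which under precisely the hypotheses of (ii) constructs an RGD-system $\mathcal{D}_{\mathcal{F}}$ and exhibits an isomorphism $\mathcal{F} \cong \mathcal{F}(\Delta(\mathcal{D}_{\mathcal{F}}), B_+)$. Thus $\mathcal{F}$ is integrable by definition.

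For (i) $\Rightarrow$ (ii), I would proceed as follows. Suppose $\mathcal{F}$ is integrable, so that there exist a twin building $\Delta = (\Delta_+, \Delta_-, \delta_*)$ of type $(W, S)$ and a chamber $c$ of $\Delta$ with $\mathcal{F} \cong \mathcal{F}(\Delta, c)$. Fix an irreducible $J \subseteq S$ with $|J| = 3$. Since $\mathcal{F}$ is $3$-spherical, $J$ is spherical, so the residue $R_J(c)$ is a thick irreducible spherical Moufang building of type $(\langle J \rangle, J)$. Any such building is canonically the positive half of a spherical twin building: restricting the root groups of the RGD-system acting on $\Delta$ to the roots in $\Phi^J$ yields an RGD-system of spherical type $(\langle J \rangle, J)$ (cf.\ Lemma \ref{Lemma: RGD-system residue isomorphism}), whose associated twin building has $R_J(c)$ as its positive half. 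The Moufang foundation of this twin building based at $c$ depends only on the rank-$2$ residues of $c$ inside $R_J(c)$ together with their glueings, and by construction of $\mathcal{F}(\Delta, c)$ these coincide with the data of $\mathcal{F}_J$. Hence $\mathcal{F}_J$ is integrable.

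The genuine obstacle of the equivalence lies entirely in the direction (ii) $\Rightarrow$ (i), which has been dispatched by Theorem \ref{Theorem: rank 3 intrgrable implies RGD-system} via the action of $G$ on the chamber system $\Cbf_{\mathcal{B}_{\mathcal{F}}}$. The direction (i) $\Rightarrow$ (ii) amounts to a routine restriction argument and presents no new difficulty.
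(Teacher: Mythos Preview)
Your proposal is correct and matches the paper's approach: (ii) $\Rightarrow$ (i) is exactly the previous theorem, and (i) $\Rightarrow$ (ii) is a routine restriction. The only cosmetic difference is that for (i) $\Rightarrow$ (ii) the paper restricts the twin building directly to the pair $(R_J(c_+), R_J(c_-), \delta_*)$, whereas you pass through the RGD-system; both are standard and yield the same conclusion.
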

\begin{proof}
	One implication follows directly by considering restriction of the twin building to the twin building $(R_J(c_+), R_J(c_-), \delta_*)$. The other follows from the previous theorem.
\end{proof}

\begin{theorem}\label{Theorem: Classification result}
	Let $\Delta$ be a thick irreducible $3$-spherical twin building of rank at least $3$. Then $\Delta$ is known.
\end{theorem}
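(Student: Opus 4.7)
The plan is to split the classification according to the minimum panel size in $\Delta$. Fix a chamber $c \in \C_+$. By Proposition \ref{Proposition: isomorphism of foundations yields iso of twin buildings}, as soon as Condition $\lco$ holds, the isomorphism class of $\Delta$ is determined by that of $\mathcal{F}(\Delta, c)$; and Corollary \ref{Corollary: Main result} tells us precisely when a Moufang foundation of the relevant type is integrable.

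In the case where every panel of $\Delta$ contains at least $6$ chambers, $\mathcal{F}(\Delta, c)$ is an irreducible Moufang foundation of $3$-spherical type, of rank at least $3$, with all panels of size $\geq 6$. Each irreducible rank $3$ restriction $\mathcal{F}(\Delta, c)_J$ coincides with $\mathcal{F}((R_J(c_+), R_J(c_-), \delta_*), c)$, hence is automatically integrable. Tits's classification \cite{Ti74} of thick irreducible spherical buildings of rank at least $3$ produces a known list for these rank $3$ restrictions, while the rank $2$ glueings are determined by the Moufang polygons involved (themselves classified). Hence $\mathcal{F}(\Delta, c)$ lies on an explicit list of foundations. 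Panels of size $\geq 6$ ensure that $\Delta$ satisfies Condition $\lco$, so Proposition \ref{Proposition: isomorphism of foundations yields iso of twin buildings} places $\Delta$ itself on a classified list. Corollary \ref{Corollary: Main result} is what guarantees that every foundation on that list in fact arises, making the classification complete and non-redundant.

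In the remaining case, some panel of $\Delta$ contains at most $5$ chambers. The plan here is first to show that $\Delta$ is locally finite: by the Moufang property, a finite panel in a rank $2$ residue of spherical type forces the other panel of that residue to be finite as well, and by irreducibility together with $3$-sphericity any two nodes of the diagram can be connected through a chain of rank $2$ residues of spherical type, so finiteness propagates globally. Once local finiteness is in hand, M\"uhlherr's classification \cite{Mu99} of thick locally finite twin buildings of irreducible $2$-spherical type with $m_{st}<8$ applies. In the $3$-spherical setting, the only case excluded by \cite{Mu99} is that of residues of type $B_2(2)$, and \cite{BM23} settles precisely this residual case, so the classification is complete.

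The main obstacle is the small-panels case: first the propagation of finiteness from a single small panel, especially across diagram edges of type $B_2$ where the two panels of a residue are parametrised asymmetrically, and then the clean treatment of the $B_2(2)$ exception via \cite{BM23}. By contrast, the large-panels case is, given the work already done in this paper, a bookkeeping consequence of Tits's rank $3$ classification, Corollary \ref{Corollary: Main result}, and Proposition \ref{Proposition: isomorphism of foundations yields iso of twin buildings}.
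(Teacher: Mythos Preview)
Your proof follows the same two-case split as the paper and is essentially correct, but your treatment of the large-panel case takes an unnecessary detour. The paper simply invokes Theorem~\ref{Theorem: rank 3 intrgrable implies RGD-system} to obtain $\mathcal{F}(\Delta,c)\cong\mathcal{F}(\Delta(\mathcal{D}_{\mathcal{F}}),B_+)$ and then Proposition~\ref{Proposition: isomorphism of foundations yields iso of twin buildings} to conclude $\Delta\cong\Delta(\mathcal{D}_{\mathcal{F}})$; this explicit RGD-system is what ``known'' means, and no appeal to Tits's classification or to an enumeration of foundations is needed. Your assertion that ``the rank $2$ glueings are determined by the Moufang polygons involved'' is an overstatement---classifying the glueings is genuinely nontrivial and is the subject of separate work (cf.\ \cite{WeiDiss}, \cite{WenDiss})---though this does not break your argument, since you only need that $\Delta$ is determined by explicitly describable data. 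In the small-panel case your propagation-of-finiteness sketch is precisely the content of \cite[(34.5)]{TW02}, which the paper cites directly; your account of \cite{BM23} is correct in spirit, though the paper is more specific that \cite{BM23} extends the isometry-extension theorem of \cite{MR95} to arbitrary thick $3$-spherical twin buildings, which is what removes the $B_2(2)$ exclusion from \cite{Mu99}.
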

\begin{proof}
	At first we assume that every panel of $\Delta$ contains at least $6$ chambers. Then $\Delta$ satisfies the Conditions $\lco$ and $\lsco$. Let $c$ be a chamber of $\Delta$ and let $\mathcal{F} := \mathcal{F}(\Delta, c)$. As in the previous corollary, each rank $3$-residue of $\mathcal{F}$ is integrable. Theorem \ref{Theorem: rank 3 intrgrable implies RGD-system} implies that $\mathcal{D}_{\mathcal{F}}$ is an RGD-system and $\mathcal{F} \cong \mathcal{F}(\Delta(\mathcal{D}_{\mathcal{F}}), B_+)$. Using Proposition \ref{Proposition: isomorphism of foundations yields iso of twin buildings}, we deduce $\Delta \cong \Delta(\mathcal{D}_{\mathcal{F}})$.
	
	Now we assume that there exists a panel containing at most $5$ chambers. Then \cite[$(34.5)$]{TW02} implies that every panel contains only finitely many chambers. But then $\Delta$ is known by the Main result of \cite{Mu99}. We note that the Main result of \cite{Mu99} uses the fact that no rank $2$ residue is associated with $B_2(2)$ in order the use the extension theorem of \cite{MR95}. But as it is shown in \cite[Corollary $6.4$]{BM23}, the extension theorem holds for arbitrary thick $3$-spherical twin buildings. Thus every irreducible locally finite $3$-spherical twin building is known by \cite{BM23} and \cite{Mu99}.
\end{proof}

\bibliography{references}

\begin{thebibliography}{10}

\bibitem{Ab96}
P.~Abramenko.
\newblock {\em Twin buildings and applications to {S}-arithmetic groups},
  volume 1641 of {\em Lecture Notes in Mathematics}.
\newblock Springer-Verlag, Berlin, 1996.

\bibitem{AB08}
P.~Abramenko and K.~S. Brown.
\newblock {\em Buildings}, volume 248 of {\em Graduate Texts in Mathematics}.
\newblock Springer, New York, 2008.
\newblock Theory and applications.

\bibitem{AM97}
P.~Abramenko and B.~M\"{u}hlherr.
\newblock Pr\'{e}sentations de certaines {$BN$}-paires jumel\'{e}es comme
  sommes amalgam\'{e}es.
\newblock {\em C. R. Acad. Sci. Paris S\'{e}r. I Math.}, 325(7):701--706, 1997.

\bibitem{BM23}
S.~Bischof and B.~M\"{u}hlherr.
\newblock Isometries of wall-connected twin buildings.
\newblock {\em to appear in Advances in Geometry, 21pp.}

\bibitem{Bo68}
N.~Bourbaki.
\newblock {\em Lie groups and {L}ie algebras. {C}hapters 4--6}.
\newblock Elements of Mathematics (Berlin). Springer-Verlag, Berlin, 2002.
\newblock Translated from the 1968 French original by Andrew Pressley.

\bibitem{Ca06}
P.-E. Caprace.
\newblock On 2-spherical {K}ac-{M}oody groups and their central extensions.
\newblock {\em Forum Math.}, 19(5):763--781, 2007.

\bibitem{CM06}
P.-E. Caprace and B.~M\"{u}hlherr.
\newblock Isomorphisms of {K}ac-{M}oody groups which preserve bounded
  subgroups.
\newblock {\em Adv. Math.}, 206(1):250--278, 2006.

\bibitem{DM07}
A.~Devillers and B.~M\"{u}hlherr.
\newblock On the simple connectedness of certain subsets of buildings.
\newblock {\em Forum Math.}, 19(6):955--970, 2007.

\bibitem{DMVM11}
A.~Devillers, B.~M\"{u}hlherr, and H.~Van~Maldeghem.
\newblock Codistances of 3-spherical buildings.
\newblock {\em Math. Ann.}, 354(1):297--329, 2012.

\bibitem{HNVM16}
M.~Horn, R.~Nessler, and H.~Van~Maldeghem.
\newblock Simple connectivity in polar spaces.
\newblock {\em Forum Math.}, 28(3):491--505, 2016.

\bibitem{Mu99}
B.~M\"{u}hlherr.
\newblock Locally split and locally finite twin buildings of {$2$}-spherical
  type.
\newblock {\em J. Reine Angew. Math.}, 511:119--143, 1999.

\bibitem{MHab}
B.~M\"{u}hlherr.
\newblock On the existence of $2$-spherical twin buildings.
\newblock {\em Habilitationsschrift an der TU Dortmund}, 1999.

\bibitem{MuSurvey}
B.~M\"{u}hlherr.
\newblock Twin buildings.
\newblock In {\em Tits buildings and the model theory of groups
  ({W}\"{u}rzburg, 2000)}, volume 291 of {\em London Math. Soc. Lecture Note
  Ser.}, pages 103--117. Cambridge Univ. Press, Cambridge, 2002.

\bibitem{MPW15}
B.~M\"{u}hlherr, H.~P. Petersson, and R.~M. Weiss.
\newblock {\em Descent in buildings}, volume 190 of {\em Annals of Mathematics
  Studies}.
\newblock Princeton University Press, Princeton, NJ, 2015.

\bibitem{MR95}
B.~M\"{u}hlherr and M.~Ronan.
\newblock Local to global structure in twin buildings.
\newblock {\em Invent. Math.}, 122(1):71--81, 1995.

\bibitem{Ro89}
M.~Ronan.
\newblock {\em Lectures on buildings}.
\newblock University of Chicago Press, Chicago, IL, 2009.
\newblock Updated and revised.

\bibitem{Ro00}
M.~A. Ronan.
\newblock Local isometries of twin buildings.
\newblock {\em Math. Z.}, 234(3):435--455, 2000.

\bibitem{RT87}
M.~A. Ronan and J.~Tits.
\newblock Building buildings.
\newblock {\em Math. Ann.}, 278(1-4):291--306, 1987.

\bibitem{Se79}
J.-P. Serre.
\newblock {\em Trees}.
\newblock Springer Monographs in Mathematics. Springer-Verlag, Berlin, 2003.
\newblock Translated from the French original by John Stillwell, Corrected 2nd
  printing of the 1980 English translation.

\bibitem{Ti74}
J.~Tits.
\newblock {\em Buildings of spherical type and finite {BN}-pairs}.
\newblock Lecture Notes in Mathematics, Vol. 386. Springer-Verlag, Berlin-New
  York, 1974.

\bibitem{Ti87}
J.~Tits.
\newblock Uniqueness and presentation of {K}ac-{M}oody groups over fields.
\newblock {\em J. Algebra}, 105(2):542--573, 1987.

\bibitem{Ti92}
J.~Tits.
\newblock Twin buildings and groups of {K}ac-{M}oody type.
\newblock In {\em Groups, combinatorics \& geometry ({D}urham, 1990)}, volume
  165 of {\em edited by M. Liebeck and J. Saxl, London Math. Soc. Lecture Note
  Ser.}, pages 249--286. Cambridge Univ. Press, Cambridge, 1992.

\bibitem{Ti73-00}
J.~Tits.
\newblock {\em R\'{e}sum\'{e}s des cours au {C}oll\`ege de {F}rance
  1973--2000}, volume~12 of {\em Documents Math\'{e}matiques (Paris)}.
\newblock Soci\'{e}t\'{e} Math\'{e}matique de France, Paris, 2013.

\bibitem{TW02}
J.~Tits and R.~M. Weiss.
\newblock {\em Moufang polygons}.
\newblock Springer Monographs in Mathematics. Springer-Verlag, Berlin, 2002.

\bibitem{WeiDiss}
S.~Weiß.
\newblock {\em Integrability of Moufang foundations}.
\newblock PhD thesis, Justus-Liebig-Universität Giessen, 2013.

\bibitem{WenDiss}
K.~Wendlandt.
\newblock {\em Exceptional twin buildings of type $\tilde{C}_2$}.
\newblock PhD thesis, Justus-Liebig-Universität Giessen, 2021.

\end{thebibliography}
\bibliographystyle{abbrv}

\end{document}